\newcommand\nat{\mathbb N}
\newcommand\zz{\mathbb Z}
\newcommand\mc\mathcal
\newcommand{\wt}{\widetilde}
\newtheorem*{thm*}{Theorem}
\newtheorem{thm}{Theorem}[section]
\newtheorem{lem}[thm]{Lemma}
\newtheorem{prop}[thm]{Proposition}
\newtheorem{cor}[thm]{Corollary}
\newtheorem{qu}[thm]{Question}
\theoremstyle{definition}
\newtheorem{ex}[thm]{Example}
\newtheorem{rem}[thm]{Remark}
\numberwithin{equation}{thm}
\newcommand{\car}{\mathsf{char}}
\renewcommand{\min}{\mathsf{min}}
\renewcommand{\max}{\mathsf{max}}
\renewcommand{\leq}{\leqslant}
\renewcommand{\geq}{\geqslant}
\DeclareMathOperator{\Spec}{\mathsf{Spec}}
\DeclareFontFamily{U}{wncy}{}
    \DeclareFontShape{U}{wncy}{m}{n}{<->wncyr10}{}
    \DeclareSymbolFont{mcy}{U}{wncy}{m}{n}
    \DeclareMathSymbol{\Sh}{\mathord}{mcy}{"58} 
\renewcommand{\log}{\mathsf{log}}
\renewcommand{\deg}{\mathsf{deg}}
\renewcommand{\dim}{\mathsf{dim}}
\renewcommand{\cref}{\Cref}
\DeclareRobustCommand*{\mfaktor}[3][]
{
   { \mathpalette{\mfaktor@impl@}{{#1}{#2}{#3}} }
}
\newcommand*{\mfaktor@impl@}[2]{\mfaktor@impl#1#2}
\newcommand*{\mfaktor@impl}[4]{
   \settoheight{\faktor@zaehlerhoehe}{\ensuremath{#1#2{#3}}}%
   \settoheight{\faktor@nennerhoehe}{\ensuremath{#1#2{#4}}}%
      \raisebox{-0.5\faktor@zaehlerhoehe}{\ensuremath{#1#2{#3}}}%
      \mkern-4mu\diagdown\mkern-5mu%
      \raisebox{0.5\faktor@nennerhoehe}{\ensuremath{#1#2{#4}}}%
}
\author[D.~Grimm, G.~Manzano-Flores]{David Grimm, Gonzalo Manzano-Flores}
\address{Universidad de Santiago de Chile, Facultad de Ciencias, Avenida Libertador Bernardo O'Higgins nº 3363, Estaci\'on Central, Santiago, Chile}
\email{david.grimm@usach.cl}
\address{Universidad de Valpara\'iso, Facultad de Ciencias, Instituto de Matem\'aticas, Gran Bretaña 1111, Valpara\'iso, Chile}
\email{gonzalo.manzano@uv.cl}
\title[Genus inequalities and sums of squares]{Arithmetic genus inequalities  with an application to sums of squares}
\begin{document}
\begin{abstract}\noindent
We show variants of the genus inequality for the irreducible com-
ponents of the special fiber of an arithmetic curve over a henselian discrete
valuation ring of residue characteristic zero that take into account the non-
existence of rational points, respectively real points, on the components. We then
apply this inequality to obtain the bound $ng$ (respectively $n(g+1)$) on the $2$-logarithm of the
totally positive sum-of-two-squares index in the function field of a curve of
genus $g$ with (respectively without) real points, defined over the field of n-fold
iterated real Laurent series. The bound $n(g+1)$ - without the saving in case
of the existence of a real point - has been known in the case of hyperelliptic
curves.
\end{abstract}

\maketitle

\noindent
\textit{Keywords:}  valuations, arithmetic curve, genus, reduction theory, sums of squares
\medskip

\noindent
\textit{Classification (MSC 2020):} 11E25, 12D15, 12E30,
12J25, 14H25

% 11E25  Sums of squares and representations by other particular quadratic forms
% 12D15 Fields related with sums of squares
% 12E30 Field arithmetic
% 12F20 Transcendental extension
% 12J10 Valued fields
% 12J25 Non-archimedian valued fields
% 14D10 Arithmetic ground fields
% 14H25 (Curves) Arithmetic ground fields

%%%%%%%%%%%%%%%%%%%%%%%%%%%
\section{Introduction} %%%
%%%%%%%%%%%%%%%%%%%%%%%%%%

The study of sums of squares in function fields of one variable over a hereditarily pythagorean field $K$ goes back to the 1970's when E.~Becker showed that for any real field $K$, that is, a field $K$ in which $-1$ is not a sum of squares, we have that every sum of squares in $K(X)$ is a sum of two squares if and only if $K$ is hereditarily pythagorean. One implication was generalized in \cite{TY03} to an arbitrary real function field $F/K$ of genus zero, namely if $K$ is hereditarily pythagorean then every sum of squares in $F$ is a sum of two squares (the other implication was shown later in \cite{Gr13}). Naturally, the question arose whether any sum of squares in a function field of a curve of higher genus over a hereditarily pythagorean field is always a sum of $2$ squares. 

Also in the 1970's, L.~Br\"ocker showed in \cite{Br76} that every hereditarily pythagorean field admits a henselian valuation whose residue field has at most two orderings. 

This motivated \cite{TVY04} to study sums of squares of hyperelliptic function fields 
$F$ over $\mathbb{R}(\!(t)\!)$, where they showed that assuming good $t$-adic reduction, every sum of squares in $F$  is a sum of two squares, but they also showed that in the elliptic curve $Y^2=(tX-1)(X^2 +1)$ over $\mathbb{R}(\!(t)\!)$, which is of bad $t$-adic reduction, the function $tX$ is a sum of $3$ squares but not of two squares in its function field. 
In the case of an arbitrary hyperelliptic function field $F$ over a  hereditarily pythagorean field $K$, it was shown in \cite{BVG} that every sum of squares in $F$ is a sum of $4$ squares, and the index of the multiplictive group of nonzero sums of two squares $(F^2+F^2)^\times$ in the multiplicative group of sums of (in this case $4$) squares $\left(\sum F^2\right)^\times$ was studied, as a measure of how far off the sums of $4$ squares are from the sums of $2$ squares.
In the special case where $K=\mathbb{R}(\!(t_1)\!)\ldots (\!(t_n)\!)$, the field of iterated real Laurent series, it is a consequence of \cite[Theorem 3.10]{BVG}  that $$ \left[\left(\sum F^2\right)^\times \, : \, (F^2+F^2)^\times \right]  \leq 2^{n(g+1)},$$ where $F/K$ is the function field of a \textit{hyperelliptic} curve of genus $g$.
In \Cref{mainsostheoremapplied}, we extend this bound to an arbitrary (not necessarily hyperelliptic) function field $F/K$ of genus $g$ as a special case of the more general \Cref{sumsofsquaresbound}. Moreover, we show that $$ \left[\left(\sum F^2\right)^\times \, : \, (F^2+F^2)^\times \right] \leq 2^{ng}$$ whenever $F$ is a real field.  In this way, the bound is naturally optimal in the case of genus zero, by the aforementioned result in \cite{TY03}. In fact, optimality of the bound $2^{n(g+1)}$ in the nonreal case is 
 a consequence of \cite[Example 4.5]{GM24} and in the real case, we show optimality of the bound $2^{ng}$ in \Cref{real optimal example}.  For every $n$ and $g$, optimality is exhibited by the curves $$Y^2=-\prod_{i=0}^{g}(X^{2}+t_{n}^{2i}) \quad \text{ and } \quad Y^2=(X-1)\prod_{i=1}^{g}(X^{2}+t_{n}^{2i}),$$
defined over $\mathbb{R}(\!(t_1)\!)\ldots (\!(t_n)\!)$, respectively.
We should note that both bounds for arbitrary curves of arbitrary genus were already described for $n=1$ in the introduction of \cite{BG24}, as an application of a genus inequality for the reduction of an arithmetic curve shown in that paper. However, this genus inequality is not sufficiently strong to extend the bound on sums of squares for general $n$, as we discuss at the end of \Cref{sumsofsquaressection}. A palatable synthesis of our arithmetic-geometric main results, which allows for an inductive application in the proof for the upper bound of the totally positive sum-of-two-squares index, is the following:

\begin{thm*}[\Cref{nonreal-genus-inequality} \& \Cref{real-genus-inequality}] Let $T$ be a discrete henselian valuation ring with residue field $k$ of characteristic zero and field of fractions $K$. 
Let $F/K$ be a function field in one variable, and assume that $K$ is relatively algebraically closed in $F$. 
Then
$$\sum_{w \in \Omega^{\text{r}}_T(F)} \mathfrak{g}(\kappa_w/k) \,\, + \sum_{w \in \Omega^{\text{n}/\text{n}}_T(F)} \mathfrak{g}(\kappa_w/k) \,\, +  \sum_{w \in \Omega^{\text{n/r}}_T(F)} \!\!1+ \mathfrak{g}(\kappa_w/k) \,\, \leq \,\, \mathfrak{g}(F/K),$$
if $F$ is real, and 
$$\sum_{w \in \Omega^{\text{n}/\text{n}}_T(F)} \mathfrak{g}(\kappa_w/k) \,\,+ \sum_{w \in \Omega^{\text{n/r}}_T(F)} \!\!1+ \mathfrak{g}(\kappa_w/k) \,\, \leq \,\, \mathfrak{g}(F/K) + 1,$$
if $F$ is nonreal.
\end{thm*}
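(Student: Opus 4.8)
The plan is to compute $\mathfrak g(F/K)$ on a regular model, record a genus inequality that keeps track of the residue fields at the intersection points of the special fibre, and then use two soft facts about how (non)realness propagates through reduction in order to absorb the extra ``$+1$'' per nonreal component into the combinatorics. Since $\car k=0$, I would fix a regular model $\mathcal X\to\Spec T$ of the smooth projective $K$-curve with function field $F$ whose special fibre $\mathcal X_k=\sum_i d_iC_i$ is a normal crossings divisor, and take $\Omega_T(F)$ to be indexed by the reduced components $C_i$ that are geometrically integral over $k$, with $\kappa_w$ the function field of the corresponding $C_i$. From the normalisation sequence $0\to\mathcal O_{\mathcal X_k^{\red}}\to\bigoplus_i\mathcal O_{C_i}\to\bigoplus_q\kappa(q)\to 0$ over the crossing points $q$, together with the fact that passing to $\mathcal X_k^{\red}$ does not increase the arithmetic genus, one obtains
$$\mathfrak g(F/K)\ \geq\ \sum_i[k_i:k]\,\bigl(\mathfrak g(C_i/k_i)-1\bigr)\ +\ \sum_q[\kappa(q):k]\ +\ \dim_k H^0(\mathcal X_k,\mathcal O_{\mathcal X_k}),$$
where $k_i$ is the field of constants of $C_i$ and the last term is $\geq 1$; in the basic case where $\mathcal X_k$ is reduced with all components geometrically integral this becomes $\mathfrak g(F/K)=\sum_{w}\mathfrak g(\kappa_w/k)+b_1(\Gamma)+\sum_q([\kappa(q):k]-1)$, with $\Gamma$ the dual graph.

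The argument then rests on two observations. \emph{(i)} If $F$ is nonreal, write $-1=\sum_\nu a_\nu^2$; for any valuation $w$ of $F$ put $\gamma=\min_\nu w(a_\nu)$, note $\gamma\leq 0$ (else $w(-1)>0$), and reduce $\sum_\nu (a_\nu\pi^{-\gamma})^2=-\pi^{-2\gamma}$ to get a nontrivial sum of squares equal to $0$ or to $-1$ in $\kappa_w$. Hence every $\kappa_w$ is nonreal, so $\Omega^{\text{n/r}}_T(F)=\Omega_T(F)$ in the nonreal case; applying this to the discrete valuations of the function fields of the $C_i$, every crossing point lying on a nonreal component has nonreal, hence non-$k$-rational, residue field, and so contributes at least $2$ to $\sum_q[\kappa(q):k]$. \emph{(ii)} If $F$ is real, fix an ordering $P$, a real closure $R$ of $(F,P)$, and an extension $\bar v$ of $v$ to $R$; since a valuation of a real closed field is compatible with the ordering and has real closed residue field, $w:=\bar v|_F$ is a valuation of $F$ with $\mathcal O_w\cap K=T$ and $\kappa_w$ real. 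Its center on $\mathcal X$ then lies in $\mathcal X_k$, and the component through that center has real function field, so $\Omega^{\text r}_T(F)\neq\varnothing$; equivalently, the set $V^{\text{nr}}$ of nonreal components is a \emph{proper} subset of the vertex set of $\Gamma$ when $F$ is real.

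Feeding the genus inequality into the claimed bounds and using that each crossing on a nonreal component is worth $\geq 2$, the whole statement reduces to the elementary graph fact that, for a connected graph $\Gamma$ with vertex subset $V^{\text{nr}}$,
$$b_1(\Gamma)+\#\{\text{edges of }\Gamma\text{ incident to }V^{\text{nr}}\}\ \geq\ \#V^{\text{nr}}-\varepsilon,$$
with $\varepsilon=0$ when $V^{\text{nr}}\subsetneq V$ and $\varepsilon=1$ when $V^{\text{nr}}=V$; this comes from comparing $\Gamma$ with the subgraph induced on $V^{\text{nr}}$ via $b_1=E-V+1$ and a spanning‑forest count, connectedness of $\Gamma$ ensuring (in the real case, by (ii)) that every component of that subgraph has an edge to its complement, while in the nonreal case $V^{\text{nr}}=V$ and the surplus ``$+1$'' on the right of the genus bound is exactly the ``$+1$'' in $b_1=E-V+1$. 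The step I expect to be the main obstacle is the first one in full generality: controlling the contributions $\sum_i[k_i:k](\mathfrak g(C_i/k_i)-1)$ of geometrically non‑integral components and the genus defect $p_a(\mathcal X_k)-p_a(\mathcal X_k^{\red})$ of multiplicities $d_i\geq 2$ by intersection‑theoretic positivity (adjunction on $\mathcal X$); a cleaner route for these is to first acquire semistable reduction over a tamely ramified—hence residue‑field‑preserving—extension $T'/T$ and descend the genus formula and the realness combinatorics along the cyclic Galois action, trading the adjunction bookkeeping for a Riemann--Hurwitz computation on the quotient maps of components.
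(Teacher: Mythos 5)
Your blueprint is recognisably the same as the paper's: pick a regular model with a normal-crossings special fibre, extract a lower bound on $\mathfrak g(F/K)$ from the Euler characteristic of the normalisation sequence of $\mathcal X_s^{\mathrm{red}}$, observe that nonrealness passes up along places (so a nonreal component forces all of its crossing points to have nonreal, hence non-$k$-rational, residue field), use henselianity to produce a real closed point (hence a component with real function field) in the real case, and close the argument with combinatorics on the dual graph. Your items (i) and (ii) are correct, and your ``elementary graph fact'' $b_1(\Gamma)+\#\{\text{edges incident to }V^{\mathrm{nr}}\}\geq\#V^{\mathrm{nr}}-\varepsilon$ is true for connected $\Gamma$. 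In the special case where every component $C_i$ has $k_i=k$, your computation does yield both inequalities.

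However, there is a genuine gap, and you already sense where it is. When some $[k_i:k]>1$, what one must show is $\#V^{\mathrm{nr}}\leq b_1\bigl(\mathcal D(\mathcal X'_s)\bigr)=\sum_q[\kappa(q):k]-\sum_i[k_i:k]+1$, the Betti number of the dual graph after the \emph{maximal unramified} base change, not of $\Gamma=\mathcal D(\mathcal X_s)$. Your inequality on $\Gamma$ produces $b_1(\Gamma)+E^{\mathrm{nr}}$, and one would need $E^{\mathrm{nr}}\leq\sum_q([\kappa(q):k]-1)-\sum_i([k_i:k]-1)$ to conclude; the first summand is indeed $\geq E^{\mathrm{nr}}$, but the subtracted $\sum_i([k_i:k]-1)$ can destroy the slack whenever geometrically non-integral components are present. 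The paper's entire Sections~2--3 (the $G$-rigidity formalism for bipartite dual graphs, \Cref{L:G-fixpoint-pivot-count}--\Cref{border-line vorteces are rational}, the epimorphism Betti bound \Cref{Betti-epimorphism}, and \Cref{rigidity-class}--\Cref{non-rat vorteces}) exist precisely to carry out this Galois descent on the graph over $k'$, including the strictness analysis that lets one drop the ``$+1$'' in the real case; your one-line spanning-forest count does not replace it. Your proposed workarounds (tame semistable reduction and Riemann--Hurwitz on quotient maps) would reintroduce the same equivariant bookkeeping, since after descending from $k'$ one still has to compare $G$-orbits of vertices and edges. Finally, your use of ``passing to $\mathcal X_k^{\mathrm{red}}$ does not increase the arithmetic genus'' silently needs $H^0(\mathcal X_s,\mathcal O_{\mathcal X_s})=k$, i.e.\ Raynaud's cohomological flatness in residue characteristic zero; the paper flags this as the exact reason the argument is restricted to $\operatorname{\mathsf{char}}(k)=0$, and it should be invoked explicitly.
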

Here, considering the set of all residually transcendental valuation extensions $w$ of $T$, we denote by $\Omega^{\text{r}}_T(F)$ the subset of those $w$ whose residue field $\kappa_w$ is real, and by $\Omega^{\text{n/n}}_T(F)$, respectively $\Omega^{\text{n/r}}_T(F)$,  the subset of those $w$ whose residue field $\kappa_w$ is nonreal, and such that the relative algebraic closure of $k$ in $\kappa_w$ is nonreal, respectively real. By $\mathfrak{g}(F/K)$, respectively $\mathfrak{g}(\kappa_w/k)$, we denote the genus of the function field, which coincides with the genus of the unique regular projective curve $C$ with function field $F$, respectively $\kappa_w$, considered as a curve over the relative algebraic closure of $K$ in $F$, respectively over the relative algebraic closure of $k$ in $\kappa_w$.  In \Cref{rational-genus-inequality} we also show a related version of the genus inequality where we do not require henselianity of $T$, and where the residual genera of valuations  ``without \textit{rational} point in the residual curve'' are the ones that are increased by one. So far, we lack an application for this second version of the genus inequality, but we include it also, since from an arithmetic-geometric point of view, it is the more natural one to consider.

The genus inequality obtained in \cite{BG24}, while additionally admitting valued base fields with perfect residue fields of positive residue characteristic, 
is weaker in the sense that the additional weight "+1" could only be shown for those $w \in \Omega^{n/r}_T(F)$ with $\mathfrak{g}(\kappa_w/k)=0$.
The less restrictive conditions on the valued base field in \cite{BG24} seem to restrict the kind of regular models of an arithmetic curve that one can work with. More precisely, it seems to require to work with the minimal regular model.

In contrast,  in the present paper, we take advantage of the fact that in the motivating application to sums of squares, only (henselian) valued base fields of residue characteristic zero are considered. Under this condition, a cohomological flatness result by Raynaud \cite{Ray70} permits to adapt the methods from \cite{BG24} so that they indeed work with a regular model of an arithmetic curve in which the special fiber is a normal crossing divisor. 

 All previous genus inequalities for constant field reductions in the literature, such as \cite{Mathieu}, \cite{Mat87}, or \cite{GMP89} do not take into account any arithmetic properties of the residue function fields since in these works, $k$ is always assumed to be algebraically closed.
 
Finally, we show in \Cref{boundlocal} that  $$\left[\mathcal{L}(F): F^{\times 2}\right] \leq 2^g,$$
where $\mathcal{L}(F)$ denotes the multiplicative group of local squares in $F^\times$. This is not a consequence of the aforementioned genus inequality, but rather of a complementary topological comparison of the dual graph of the reduction of a certain regular model under maximal unramified base change that we observe in \Cref{bettiinequality}. This,  together with the description of the failure of a local-global principle for quadratic forms in terms of the topology of the special fiber of such a model, given in \cite{HHK15}, yields the bound.
Note that the index of $F^{\times 2}$ inside $\left(F^2 + F^2\right)^\times$ in a function field $F$ is always infinite and thus not interesting, but since $$F^{\times 2} \subseteq \mathcal{L}(F) \subseteq \left(F^2 + F^2\right)^\times,$$ we believe the local squares to be an interesting substitute. Moreover, $\mathcal{L}(F)$ may be identified with the so called Kaplansky radical of $F$, as will be explored in an upcoming article of the authors in collaboration with K.J.~Becher. Both, the bound on the index of the square group inside the local square group, as well as its implications on the Kaplansky radical have been obtained in the doctoral thesis of the second named author at the Universidad de Santiago de Chile \& Universiteit Antwerpen under the joint supervision of the first named author and K.J.~Becher.

\bigskip

 We would like to thank Karim Johannes Becher for his encouragement and discussions on this continuation of \cite{BG24}. We would also like to thank Qing Liu for patiently answering questions of the first named author and making him aware of the crucial cohomological flatness property in residue characteristic zero. 
 The first named author gratefully acknowledges support from \textit{Universidad de Santiago de Chile, proyecto DICYT 042432G}, and the second named author gratefully acknowledges support from \textit{Fondecyt ANID Postdoctoral Grant 3240191} and from Concurso de Subvenci\'on a la Instalaci\'on en la Academia 2025 N°85250089.

\section{Symmetries in graphs}
In the subsequent sections, we will study certain reductions of an algebraic curve, where we aim to bound the number of irreducible components with certain arithmetic properties, such as having a real field of constants but not admitting a real point. These are the relevant irreducible components to bound in view of the genus-inequality theorem for the function field of the algebraic curve, see the stated main theorem in the introduction. Any such irreducible component in the reduction corresponds to what we shall call a singular rigidity in the graph related to the reduction, or more precisely, to a Galois-orbit of singular rigidities, given the natural graph-action of the absolute Galois group of the base field of the reduction. In the current Section, we will define abstractly the notion of a rigidity, and of a singular rigidity as a special case, for a graph with a group action. We give bounds on the number of orbits of rigidities, depending on the Betti-number of the graph. Later on, using an established relation between the genus of an algebraic curve and the genera of the irreducible components of the reduction on one side, and the Betti-number of the reduction graph on the other side, this will almost yield the optimal genus inequalities of the theorem presented in the introduction, except in the case where $F$ is real, the right hand side is off by one. In order to reach the optimal bound in the real case,  we will show that either there exists an additional singular rigidity in the reduction graph which corresponds to an irreducible component admitting real points, or that otherwise there has to exist a certain arrangement of irreducible components with real intersection points that corresponds to a non-singular rigidity in the graph.

\bigskip

A finite (undirected) simple graph $\mathcal{D}$ is a pair $(\mathcal{V}, \mathcal{E})$ consisting of a finite set $\mathcal{V}$ and a subset $\mathcal{E}$ of the set of all subsets of $\mathcal{V}$ of cardinality $2$. An element of $\mathcal{V}$ is called a \textit{vertex} of $\mathcal{D}$ and an element of $\mathcal{E}$ is called a \textit{edge} of $\mathcal{D}$.
For a vertex $v \in \mc{V}$, we call  $\mathrm{deg}(v):= \vert \{w \in \mathcal{V} \mid \{v,w\} \in \mathcal{E}\} \vert$ the \textit{degree of $v$}.
We say that two vertices $v,w \in \mc{V}$ are  \textit{connected} if there is a sequence of vertices $v=v_0, v_1,\ldots, v_n=w \in \mathcal{V}$ such that $\{v_{i-1},v_{i}\} \in \mathcal{E}$ for each $1 \leq i \leq n$.  We say that $\mc{D}$ is \textit{connected}, if any two of its vertices are connected, and \textit{simply connected} if moreover the connecting sequence of vertices between any two given vertices is unique. We set $$\beta(\mc{D}):= 1 -\vert \mc{V} \vert + \vert \mc{E} \vert$$ and call it the \textit{Betti number} of $\mc{D}$.

Let for the rest of this section $\mc{D}$ always denote a finite connected simple graph. Note that $\beta(\mc{D}) \geq 0$ and 
$\beta(\mc{D}) = 0$ if and only if $\mc{D}$ is simply connected. We also call a simply connected graph a \textit{tree}.

We call a finite connected simple graph $\mathcal{D}$ together with a group action of a group $G$  via graph automorphisms a \textit{simple $G$-graph}.
Note that every graph is in particular a $G$-graph for $G$ the group of automorphisms of the graph.
 Let $G$ be a group and $\mathcal{D}=(\mathcal{V},\mathcal{E})$ a simple $G$-graph.  Let $H \leq G$ be a subgroup. We denote by $\mc{D}^H$ the full subgraph of $\mc{D}$ spanned by $\mc{V}^H=\{v \in \mathcal{V} \mid H \subseteq \mathsf{stab}_G(v)\}$.
We call a connected component $\mathcal{R}=(\mathcal{V}_\mathcal{R},\mathcal{E}_\mathcal{R})$ of $\mathcal{D}^H$ an \textit{$H$-rigidity} of the $G$-graph $\mathcal{D}$ if $H=\mathsf{stab}_G(v)$ for every $v \in \mathcal{V}_\mathcal{R}$. A \textit{rigidity of the $G$-graph $\mathcal{D}$} is a $H$-rigidity for some (uniquely determined) subgroup $H \leq G$. 
Note that any two distinct rigidities are disjoint. Given a rigidity $\mathcal{R}$ of $\mathcal{D}$, we sometimes call the common stabilizer subgroup $H$ for all vertices of $\mathcal{R}$ the \textit{rigidifier of $\mathcal{R}$}.

We call a rigidity that consists of exactly one vertex a  \textit{singular rigidity}. They are of particular interest in the current article.

\begin{ex}
Consider the line graph of three vertices $\mathcal{D}=(\mathcal{V}, \mathcal{E})$, where $\mathcal{V}=\{v_1, v_2, v_3\}$ and $\mathcal{E}=\{ \{v_1,v_2\}, \, \{v_2,v_3\}\}$, considered as a $G$-graph, where $G=\mathsf{Aut}(\mathcal{D})$, i.e. the group of order two consisting of the identity and the permutation of $v_1$ and $v_3$. For the trivial subgroup $H=\{\mathsf{id}_\mathcal{D}\}$, we have that $\mathcal{D}^H=\mathcal{D}$ and this graph is connected. However, the full stabilizer of $v_2$ is  all of $G$, so there is no rigidity of $\mathcal{D}$ with rigidifier $\{\mathsf{id}_\mathcal{D}\}$. For the other subgroup $H=G$, we have that that $\mathcal{D}^H=(\{v_2\}, \emptyset)$, which is connected. Moreover the full stabilizer of its only vertex $v_2$ coincides with $H$, so indeed $\mathcal{R}=(\{v_2\}, \emptyset)$ is a (singular) rigidity of $\mathcal{D}$. It is the unique rigidity of the $G$-group $\mathcal{D}$.
\end{ex}

\begin{lem}\label{restricting rigidities}
   Let $G$ be a group and $\mathcal{D}=(\mathcal{V},\mathcal{E})$ a finite simple $G$-graph. Let $\mathcal{D}'=(\mathcal{V}',\mathcal{E}')$ be a connected $G$-stable subgraph of $\mathcal{D}$. Let $\mathcal{R}=(\mathcal{V}_\mathcal{R},\mathcal{E}_\mathcal{R})$ be a rigidity of the $G$-graph $\mathcal{D}$. Then every connected component of $\mathcal{R}':=(\mathcal{V}_{\mathcal{R}} \cap \mathcal{V}' ,  \mathcal{E}_\mathcal{R}\cap \mathcal{E}')$ is a rigidity of the $G$-graph $\mathcal{D}'$.
\end{lem}
\begin{proof}
  Let $H$ be the rigidifier of $\mathcal{R}$. Since the vertices of $\mathcal{V}_{\mathcal{R}}$ have stabilizer $H$, the same is true for the vertices of $\mathcal{V}_{\mathcal{R}} \cap \mathcal{V}'$. Since $\mathcal{V}^H\cap \mathcal{V}'= \mathcal{V}'^H$, we have that every connected component of $\mathcal{R}'$ is a connected component of $\mathcal{D}'^H$, since otherwise, should there exist a connected component of $\mathcal{R}'$ that is properly cotained in a connected component of $\mathcal{D}'^H$, the connected componente of $\mathcal{D}^H$ that contains the one of  $\mathcal{D}'^H$ then would properly contain $\mathcal{R}$, contradicting the hypothesis of being a rigidity of $\mathcal{D}$.
\end{proof}

\smallskip

In \cite{BG24}, the notion of \textit{pivot vertices} was introduced and studied. They were defined as vertices whose stabilizer group acts on the neighboring vertices by separating them into orbits of even cardinality.  In particular, pivot vertices are singular rigidities. The following upper bounds on the number of rigidities or $G$-orbits of rigidities were shown in \cite{BG24} for the special case of pivot vertices. We denote by $\mathcal{D}_G$ the set of rigidities of the $G$-graph $\mc{D}$.
\begin{prop}\label{pivots in trees}
If $\mc{D}$ is a tree, then $|\mc{D}_G| \leq 1$.
\end{prop}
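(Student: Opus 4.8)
The plan is to exploit the \emph{center} of the finite tree $\mathcal{D}$. Before anything else I would record two elementary facts. First: for a subgroup $H \leq G$, if $H$ fixes two vertices $u,v$ of $\mathcal{D}$, then it fixes the unique geodesic between them pointwise (uniqueness of geodesics is exactly the condition $\beta(\mathcal{D})=0$), and hence the full subgraph $\mathcal{D}^H$ spanned by the $H$-fixed vertices is connected. Consequently a rigidity with rigidifier $H$ is forced to be \emph{all} of $\mathcal{D}^H$ — its only connected component — and in particular is nonempty. Second: the center $Z$ of $\mathcal{D}$, i.e.\ the set of vertices of minimal eccentricity, consists of a single vertex or of two adjacent vertices, and is preserved by every graph automorphism, hence is $G$-invariant. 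From $G$-invariance of $Z$ one sees immediately that $\mathrm{Stab}_G(c)$ is the same subgroup of $G$ for every $c \in Z$; call it $M$.

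The crux is the claim that \textbf{every rigidifier group of $\mathcal{D}$ is contained in $M$}. Here I would use that every vertex $v$ has a well-defined closest vertex $c(v) \in Z$: this is clear when $|Z|=1$, and when $Z = \{c_1,c_2\}$ with $c_1,c_2$ adjacent, deleting the edge $\{c_1,c_2\}$ splits $\mathcal{D}$ into two subtrees, $v$ lies in exactly one of them, and $c(v)$ is the center vertex contained in it (so $d(v,c_1)$ and $d(v,c_2)$ differ by exactly $1$). Now if a subgroup $H$ fixes $v$, then each $h \in H$ preserves distances, preserves $Z$, and fixes $v$, so it must carry $c(v)$ to the vertex of $Z$ closest to $hv=v$, namely $c(v)$ itself; thus $H \leq \mathrm{Stab}_G(c(v)) = M$.

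To conclude, let $\mathcal{R}$ be an arbitrary rigidity, with rigidifier group $H$. By the first fact, $\mathcal{R} = \mathcal{D}^H$ and this is nonempty, so I may pick $v \in \mathcal{R}$. Since $H$ fixes $v$, the claim gives $H \leq M$, so $H$ fixes every vertex of $Z$, i.e.\ $Z \subseteq \mathcal{D}^H = \mathcal{R}$. But by the definition of a rigidity, $H$ is the full stabilizer in $G$ of every vertex of $\mathcal{R}$, so applying this to any $c \in Z \subseteq \mathcal{R}$ gives $H = \mathrm{Stab}_G(c) = M$. Hence every rigidity equals $\mathcal{D}^M$, which is exactly $|\mathcal{D}_G| \leq 1$ (in fact one checks $\mathcal{D}^M$ is always a rigidity, so $|\mathcal{D}_G|=1$, but only the inequality is needed).

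I do not expect a serious obstacle. The points needing a little care are the well-definedness and canonicity of the closest center vertex $c(v)$ — this is precisely where the tree hypothesis is used — and reading $\mathcal{D}^H$ consistently as the full subgraph on the $H$-fixed vertices, so that it is automatically connected. One could instead argue by cases, separating the situation where $G$ fixes a vertex of $Z$ (then $M = G$ and the inclusion $H \leq M$ is vacuous) from the one where the two center vertices are interchanged by $G$ (then $[G:M]=2$); the uniform ``closest vertex'' formulation simply bypasses this bookkeeping.
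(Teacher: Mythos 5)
Your argument is correct, and it is a genuinely different route from the paper's. The paper proceeds by induction on the number of vertices: it repeatedly prunes all degree-one vertices, observes that a rigidity of $\mc{D}$ restricts to a rigidity of the pruned subtree (a rigidity cannot consist solely of a leaf, since a group fixing a leaf must also fix its unique neighbour), and reduces to a smaller tree. Your proof is non-inductive and structural: you pin down the unique rigidity directly as $\mc{D}^M$, where $M$ is the common stabilizer of the center $Z$. The two key observations you make are both sound and worth noting. First, in a tree $\mc{D}^H$ is automatically connected (geodesics between fixed vertices are fixed pointwise), so a rigidity with rigidifier $H$ must be all of $\mc{D}^H$ — this collapses the ``connected component'' clause in the definition. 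Second, any $H$ fixing some vertex $v$ must also fix the nearest-center vertex $c(v)$, hence $H \leq M$; together with $Z\subseteq \mc{D}^H = \mc{R}$ and the rigidity condition at a vertex of $Z$, this forces $H = M$. Your parenthetical that $\mc{D}^M$ actually \emph{is} a rigidity (so $|\mc{D}_G| = 1$) is also correct: any $w \in \mc{D}^M$ has $\mathrm{Stab}_G(w) \leq M$ by the nearest-center argument and $\mathrm{Stab}_G(w)\geq M$ by definition of $\mc{D}^M$. Compared with the paper's pruning induction, your approach sidesteps the small base cases (e.g.\ the two-vertex tree, where removing all degree-one vertices would empty the graph) and gives the sharper equality for free; the paper's version has the virtue of being of a piece with the inductive style of the other lemmas in that section.
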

\begin{proof}
If $|\mc{V}|\leq 1$ the statement is trivial.
Assume now that $\mc{D}$ is a tree with $|\mc{V}| > 1$. 
Then $\mc{D}$ has at least one vertex of degree $1$.
If $\mc{D}$ consists only of vertices of degree $1$, that is, if $\mc{D}$ is the tree consisting of two vertices, then %{\color{red}\sout{either $G$ acts trivially on $\mc{D}$ and $\mc{D}$ is the unique rigidity, or $G$ acts by transposition of the vertices and hence $\mc{D}$ contains no rigidity}} 
$\mathcal{D}$ is the unique rigidity of $\mathcal{D}$, independent of the group acting on it.
Otherwise, by removing all vertices of degree $1$ from $\mc{D}$, we obtain a nonempty subtree $\mc{D}'$ of $\mc D$ with strictly fewer vertices than $\mc{D}$. Any rigidity $\mathcal{R}$ of $\mc{D}$ restricts to a unique rigidity of $\mc{D}'$ by \Cref{restricting rigidities}, since any rigidity of $\mathcal{D}$ necessarily contains a vertex that has degree at least $2$ in $\mathcal{D}$. Any two distinct rigidities of $\mc{D}$ are disjoint, and hence restrict to two distinct rigidities of $\mc{D}'$.
Hence the statement follows by induction on $|\mc{V}|$.
\end{proof}

Note that the $G$-action on $\mc{D}$ induces a $G$-action on $\mathcal{D}_G$. More precisely, if $\mc{R} \in \mc{D}_G$ is a rigidity with stabilizer $H$ and rigidifier $\widetilde{H} \leq H$, then $g.\mc{R}$ is a rigidity with stabilizer $gHg^{-1}$ and rigidifier $g\widetilde{H}g^{-1}$,  for any $g\in G$. We denote by $\mathcal{D}_G^G$ the set of rigidities that are fixed under this action of $G$. Note that if $\mc{R} \in \mathcal{D}_G^G$, that is, if $g.\mc{R}=\mc{R}$ for every $g\in G$, then $G$ is not necessarily the rigidifier of $\mc{R}$ in $G$, since the induced $G$-action on the graph $\mc{R}$ need not be trivial. If, however, $\mc{R}$ is a singular rigidity, then indeed $\mc{R} \in \mathcal{D}_G^G$ implies that $G$ is the rigidifier of $\mc{R}$.

\begin{lem}\label{L:G-fixpoint-pivot-count}
Let $\mc{D}^G_G\neq \emptyset$.
Then $|\mc{D}_G | \leq \beta(\mc{D}) + 1$.  
\end{lem}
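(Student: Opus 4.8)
The plan is to prove $|\mc{D}_G|\leq\beta(\mc{D})+1$ by induction on $|\mc{V}|+|\mc{E}|$, with \cref{pivots in trees} furnishing the base case: if $\mc{D}$ is a tree then $\beta(\mc{D})=0$ and $|\mc{D}_G|\leq 1$. For the inductive step I would first \emph{reduce to a $G$-fixed pivot}. Pick $\mc{R}_0\in\mc{D}^G_G$; if it has more than one vertex, contract it to a single point $p_0$. One checks that this keeps $\mc{D}$ connected, does not increase $\beta$, strictly decreases $|\mc{V}|+|\mc{E}|$, makes $\{p_0\}$ a connected component of the fixed subgraph (hence a singular rigidity fixed by $G$), and induces a bijection between the rigidities of the contracted graph and $\{\{p_0\}\}\cup(\mc{D}_G\setminus\{\mc{R}_0\})$, so that $|\mc{D}_G|$ is unchanged; the inductive hypothesis then settles this case. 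Thus we may assume $\mc{R}_0=\{p_0\}$ is a pivot, which forces $\deg(p_0)\geq 2$ since a $G$-fixed leaf would have a $G$-fixed neighbour.

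The decisive dichotomy is whether $\mc{D}\setminus\{p_0\}$ is connected. If it is \emph{not}, let $C_1,\dots,C_m$ ($m\geq2$) be its components and let $\widehat{C_i}$ be $C_i$ with $p_0$ and the $d_i$ edges from $p_0$ to $C_i$ re-attached, regarded as a $G_i$-graph for $G_i:=\operatorname{Stab}_G(C_i)$. The crucial point is that $p_0$ is automatically $G_i$-fixed in $\widehat{C_i}$, so the component of $p_0$ in the fixed subgraph is a $G_i$-fixed rigidity and the hypothesis holds for $\widehat{C_i}$; moreover $\widehat{C_i}$ is strictly smaller than $\mc{D}$ and $\sum_i\beta(\widehat{C_i})=\beta(\mc{D})$. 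A direct inspection of the fixed subgraphs $\mc{D}^H$ shows that the rigidities of $\mc{D}$ other than $\{p_0\}$ are exactly the rigidities of the $\widehat{C_i}$ not containing $p_0$, while $p_0$ lies in precisely one rigidity of each $\widehat{C_i}$; hence $|\mc{D}_G|=1+\sum_i(|(\widehat{C_i})_{G_i}|-1)$, and the inductive hypothesis gives $|\mc{D}_G|\leq 1-m+\sum_i(\beta(\widehat{C_i})+1)=\beta(\mc{D})+1$.

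If instead $D_0:=\mc{D}\setminus\{p_0\}$ is connected, I split according to whether $D_0$ has a $G$-fixed rigidity. If it does, induction gives $|(D_0)_G|\leq\beta(D_0)+1$, and since the rigidities of $\mc{D}$ are $\{p_0\}$ together with a subset of those of $D_0$ and $\beta(D_0)=\beta(\mc{D})-\deg(p_0)+1$, we get $|\mc{D}_G|\leq\beta(\mc{D})+3-\deg(p_0)\leq\beta(\mc{D})+1$. If $D_0$ has no $G$-fixed rigidity, note that deleting a $G$-orbit of edges never decreases $|\mc{D}_G|$ (each rigidity splits into rigidities of the smaller graph), so if some $G$-orbit of edges can be deleted keeping $\mc{D}$ connected then $\{p_0\}$ remains a pivot, $\beta$ drops, and induction even gives $|\mc{D}_G|\leq\beta(\mc{D})$; while if $D_0$ is a tree, \cref{pivots in trees} forces $|(D_0)_G|\leq1$, hence (a lone rigidity being $G$-fixed) $D_0$ has no rigidity and $|\mc{D}_G|=1$. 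This leaves the case where $\mc{D}$ is ``$G$-edge-critical'' (every $G$-orbit of edges disconnects it), $D_0$ is connected with $\beta(D_0)\geq1$, and $D_0$ carries no $G$-fixed rigidity — and I expect this to be the main obstacle. Here one would pick a $G$-orbit $O$ of edges on a cycle of $D_0$ (so $O$ is a cut of $\mc{D}$) and analyse the components $X_1\ni p_0,X_2,\dots,X_r$ of $\mc{D}\setminus O$: $X_1$ still has $\{p_0\}$ as a pivot and is strictly smaller, so the inductive hypothesis controls $|(X_1)_G|$; each $X_j$ with $j\geq2$ meets the neighbour set of $p_0$ trivially and is joined to the rest of $\mc{D}$ only through $O$, so the rigidities it carries — it need not satisfy the hypothesis of the lemma on its own — must be charged against the Betti number that cutting $O$ consumes, namely $|O|-(r-1)$, and making this accounting precise across the orbit structure of the $X_j$ is the crux of the argument.
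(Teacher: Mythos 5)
Your reduction to a singular $\mc{R}_0$ via contraction is not correct. The claim that contracting $\mc{R}_0$ to a point $p_0$ induces a bijection between the rigidities of the contracted graph $\mc{D}''$ and $\{\{p_0\}\}\cup(\mc{D}_G\setminus\{\mc{R}_0\})$ fails, because the hypothesis $\mc{R}_0\in\mc{D}_G^G$ only asserts that $\mc{R}_0$ is $G$-invariant as a set, not that its vertices are fixed by $G$; the paper flags exactly this subtlety immediately after introducing $\mc{D}_G^G$. If the rigidifier of $\mc{R}_0$ is a proper subgroup $H<G$, then after contraction $p_0$ acquires full stabilizer $G$, so $p_0$ lies in $(\mc{D}'')^{H'}$ for every subgroup $H'\leq G$, whereas before, $\mc{R}_0$ lay outside $\mc{D}^{H'}$ whenever $H'\not\leq H$. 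This can glue together previously separate components of $\mc{D}^{H'}$ and destroy rigidities. Concretely, take $G=\mathbb{Z}/2\times\mathbb{Z}/2$ acting on $\mc{V}=\{a,b,c,c'\}$ with $(1,0)$ swapping $c,c'$ and fixing $a,b$, and $(0,1)$ swapping $a,b$ and fixing $c,c'$, with edges $\{a,b\},\{a,c\},\{a,c'\},\{b,c\},\{b,c'\}$. Then $\mc{R}_0=\{a,b\}$ is a $G$-fixed rigidity with rigidifier $\la(1,0)\ra$, while $\{c\}$ and $\{c'\}$ are singular rigidities with rigidifier $\la(0,1)\ra$, so $|\mc{D}_G|=3=\beta(\mc{D})+1$; but after contracting $\{a,b\}$ to $p_0$ the fixed subgraph $(\mc{D}'')^{\la(0,1)\ra}$ becomes the whole connected graph, so $\{c\}$ and $\{c'\}$ cease to be rigidities and $|\mc{D}''_G|=1$. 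The inductive step therefore bounds a quantity that may be strictly smaller than $|\mc{D}_G|$, and the reduction collapses.

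Independently of this, the final sub-case, where $D_0$ is connected with $\beta(D_0)\geq1$, carries no $G$-fixed rigidity, and every $G$-orbit of edges is a cut of $\mc{D}$, is explicitly left open (``making this accounting precise \ldots is the crux of the argument''). The difficulty is genuine: the pieces $X_j$ with $j\geq2$ need not satisfy the hypothesis of the lemma, so the induction cannot be applied to them, and bounding $|(X_j)_{G_j}|$ without a $G_j$-fixed rigidity is precisely the content of \cref{L:pivot-betti-bound} --- which in the paper is deduced \emph{from} the present lemma, so invoking it here would be circular unless one sets up a simultaneous induction, which you do not. The paper's own proof avoids your connectivity dichotomy altogether: it inducts on $|\mc{E}|$, exploits the $G$-invariance of the distance function $d(\cdot)$ to the chosen $\mc{R}_0\in\mc{D}_G^G$, and peels off either the intra-$\mc{M}$ edges or the vertices of the maximal-distance set $\mc{M}$; since $\mc{M}$ is $G$-invariant and disjoint from $\mc{R}_0$, connectedness and the $G$-fixed rigidity $\mc{R}_0$ persist at every step, and no separate treatment of the disconnected or edge-critical cases is required.
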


\begin{proof}
Let  $\mc{R}_0 \in \mc{D}^G_G$. We will show that  $|\mc{D}_G \setminus \{\mc{R}_0\} | \leq \beta(\mc{D})$.
For $v\in\mc{V}$ let $d(v)$ denote the distance between $v$ and $\mc{R}_0$, that is,
the smallest $r\in\nat$ such that there exist $v_0,v_1,\ldots,v_r\in\mc{V}$ with $\{v_{i-1},v_i\}\in\mc{E}$ for $1\leq i\leq r$ where $v_r=v$ and $v_0 \in \mc{R}_0$.
Note that $d(gv)=d(v)$ for all $v\in\mc{V}$ and $g\in G$.

The proof of the statement is  by induction 
 on $|\mc{E}|$.
If $|\mc{E}|=0$, then $\mc{V}= \mc{R}_0=\{v_0\}$ and the statement holds trivially.
Assume now that $|\mc{E}|\geq 1$. We set $m=\max\{d(v)\mid v\in\mc{V}\}$, $\mc{M}=\{v\in\mc{V}\mid d(v)=m\}$ and $\mc{E}^\ast=\mc{E}\cap\{\{w,w'\}\mid w,w'\in\mc{M}\}$.

We first consider the case where $\mc{E}^\ast\neq \emptyset$.
We set $\mc{E}'=\mc{E}\setminus\mc{E}^\ast$ and consider the graph $\mc{D}'=(\mc{V},\mc{E}')$.
Note that $\mc{D}'$ is connected, $\beta(\mc{D}') <\beta(\mc{D})$, and the $G$-action on $\mc{D}$ restricts to a $G$-action on $\mc{D}'$.
Furthermore, every rigidity of $\mc{D}$, after removing edges from $\mc{E}^\ast$, either remains connected and hence remains a single rigidity of $\mc{D}'$, or it decomposes in several connected components, each of which gives a rigidity on $\mc{D}'$, by \Cref{restricting rigidities}.
Since $|\mc{E}'|<|\mc{E}|$, we therefore obtain by the induction hypothesis that 
$|\mc{D}_G\setminus\{R_0\}|\leq |\mc{D}'_G\setminus\{R_0\}|\leq \beta(\mc{D}')< \beta(\mc{D})$.

We now consider the case where $\mc{E}^\ast=\emptyset$. 
Set $\mc{V}'=\mc{V}\setminus \mc{M}$. We consider the graph $\mc{D}'=(\mc{V}',\mc{E}')$ where $\mc{E}'=\mc{E}\cap\{\{v,v'\}\mid v,v'\in\mc{V}'\}$ (i.e.~the full subgraph of $\mc{D}$ spanned by $\mc{V}'$).
Note that $\mc{D}'$ is connected,  $\beta(\mc{D}')\leq \beta(\mc{D})$, the $G$-action on $\mc{D}$ restricts to a $G$-action on $\mc{D}'$, and every rigidity of $\mc{D}$ that contains a vertex in $\mc{V}'$ restricts to at least one rigidity of $\mc{D}'$. 
Since $\mc{M}\neq \emptyset$ we have that $|\mc{E}'|<|\mc{E}|$.
Hence, the induction hypothesis yields that  $|\mc{D}'_G\setminus\{R_0\}|\leq \beta(\mc{D}')$. Let $\mc{M}_G$ denote the set of (necessarily singular) rigidities of $\mc{D}_G$ that contain no vertex in $\mc{V}'$.  Then 
$\vert \mc{D}_G\setminus\{\mc{R}_0\} \vert \leq \vert \mc{D}'_G\setminus\{\mc{R}_0\} \vert + \vert \mc{M}_G \vert \leq \beta(\mc{D}'_G) + \vert \mc{M}_G  \vert$.
Since $\mc{E}^\ast=\emptyset$, we have $|\mc{E}\setminus\mc{E}'| = \sum_{w \in \mc{M}} \deg(w)$ and therefore
$\beta(\mc{D})-\beta(\mc{D}')=\sum_{w\in\mc{M}}(\deg(w)-1)$.
Since every vertex in $\mc{M}_G$ has degree at least $2$, we obtain that 
 $\beta(\mc{D})-\beta(\mc{D}')\geq |\mc{M}_G|$.
Hence we conclude that 
 $|\mc{D}_G\setminus\{R_0\}|\leq |\mc{D}'_G \setminus\{R_0\}|+|\mc{M}_G|\leq \beta(\mc{D})$.
\end{proof}

\begin{lem}\label{L:pivot-betti-bound}
$|\{\mc{R} \in \mc{D}_G \mid \mc{V}_\mc{R} \cap  G.v = \emptyset\}| \,\leq \, \beta(\mc{D})+|G.v|-1$ for any $v \in \mc{V}$.
\end{lem}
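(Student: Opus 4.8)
The plan is to deduce this from \Cref{L:G-fixpoint-pivot-count} by artificially creating a $G$-fixed rigidity attached to the orbit $Gv$. Write $Gv$ for the $G$-orbit of $v$. First I would build an auxiliary $G$-graph $\mc{D}'$ from $\mc{D}$ by adjoining one new vertex $v_0$, adding exactly one new edge between $v_0$ and each vertex $w \in Gv$, and extending the $G$-action so that $v_0$ is fixed. Since $G$ permutes $Gv$, it permutes the set of new edges, so this is indeed an action by graph automorphisms; moreover $\mc{D}'$ is connected (because $\mc{D}$ is and $Gv \neq \emptyset$), and
$$\beta(\mc{D}') = 1 - (|\mc{V}|+1) + (|\mc{E}| + |Gv|) = \beta(\mc{D}) + |Gv| - 1 .$$
The full stabilizer of $v_0$ in $G$ is all of $G$, so $\{v_0\}$ is a singular rigidity of $\mc{D}'$ with rigidifier $G$; in particular $\{v_0\} \in {\mc{D}'}^G_G$, so \Cref{L:G-fixpoint-pivot-count} applies and gives $|\mc{D}'_G| \leq \beta(\mc{D}') + 1 = \beta(\mc{D}) + |Gv|$.

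Next I would produce an injection $\mc{R} \mapsto \mc{R}$ from $\{\mc{R} \in \mc{D}_G \mid \mc{V}_\mc{R} \cap Gv = \emptyset\}$ into $\mc{D}'_G \setminus \{\{v_0\}\}$. Let $\mc{R}$ be a rigidity of $\mc{D}$ with rigidifier $H$ satisfying $\mc{V}_\mc{R} \cap Gv = \emptyset$. No vertex of $\mc{R}$ can be adjacent in $\mc{D}^H$ to a vertex of $Gv$, since such a neighbour would lie in the connected component $\mc{R}$ of $\mc{D}^H$, contradicting $\mc{V}_\mc{R} \cap Gv = \emptyset$. Hence adjoining $v_0$ together with the new edges (all incident to $Gv$) neither enlarges $\mc{R}$ nor connects it to $v_0$ inside ${\mc{D}'}^H$, so $\mc{R}$ remains a connected component of ${\mc{D}'}^H$ not containing $v_0$; and the $G$-stabilizer of each vertex of $\mc{R}$ is literally unchanged, hence still exactly $H$. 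Thus $\mc{R}$ is a rigidity of the $G$-graph $\mc{D}'$ with rigidifier $H$, and $\mc{R} \neq \{v_0\}$. This assignment is obviously injective (a rigidity is determined by its vertex set, and no new vertex or internal edge was added to $\mc{R}$), so
$$\left| \{\mc{R} \in \mc{D}_G \mid \mc{V}_\mc{R} \cap Gv = \emptyset\} \right| \leq |\mc{D}'_G| - 1 \leq \beta(\mc{D}) + |Gv| - 1 ,$$
as desired.

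The step I expect to require the most care is the verification in the previous paragraph that a $Gv$-avoiding rigidity of $\mc{D}$ survives \emph{unchanged} as a rigidity of $\mc{D}'$ — i.e.\ that it neither merges with the component of $v_0$ in ${\mc{D}'}^H$ nor acquires a larger stabilizer — so that the injection really lands in $\mc{D}'_G \setminus \{\{v_0\}\}$; everything else is the same $\beta$-bookkeeping as in the proof of \Cref{L:G-fixpoint-pivot-count}. Alternatively, one could prove the lemma directly by an induction on $|\mc{E}|$ modelled on that proof, peeling off edges and the vertices of $Gv$, but the reduction above seems cleaner.
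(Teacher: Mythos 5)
Your construction is exactly the paper's (the paper calls your $\mc{D}'$ by the name $\mc{D}_0$): adjoin a $G$-fixed vertex $v_0$ joined to every vertex of $Gv$, compute $\beta(\mc{D}')=\beta(\mc{D})+|Gv|-1$, invoke \Cref{L:G-fixpoint-pivot-count}, and check that a $Gv$-avoiding rigidity of $\mc{D}$ remains a rigidity of $\mc{D}'$ disjoint from the component of $v_0$. Your verification of that last step is careful and correct, and is in fact spelled out in more detail than the paper's.

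The one place the argument as written breaks is the claim that ``$\{v_0\}$ is a singular rigidity of $\mc{D}'$''. That is true only when $v\notin\mc{V}^G$. If $v\in\mc{V}^G$, then $Gv=\{v\}$ and the new edge $\{v_0,v\}$ lies in $(\mc{D}')^G$, so the connected component of $(\mc{D}')^G$ containing $v_0$ also contains $v$ (and possibly more); thus $\{v_0\}$ is not a connected component of $(\mc{D}')^G$, hence not a rigidity. In that case $\mc{D}'_G\setminus\{\{v_0\}\}=\mc{D}'_G$ and your chain of inequalities loses the needed $-1$. The paper avoids this by first disposing of the case $v\in\mc{V}^G$ separately, where $|Gv|=1$ and one reduces directly to \Cref{L:G-fixpoint-pivot-count} applied to $\mc{D}$ itself. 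Alternatively you can keep your uniform construction, provided you replace ``$\{v_0\}$'' throughout by ``the rigidity $\mc{R}_0$ of $\mc{D}'$ that contains $v_0$'': its rigidifier is $G$ and it is $G$-fixed, so $(\mc{D}')^G_G\neq\emptyset$ and \Cref{L:G-fixpoint-pivot-count} still applies, and your injection lands in $\mc{D}'_G\setminus\{\mc{R}_0\}$ because each image avoids $v_0$ and distinct rigidities are disjoint, which recovers the $-1$.
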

\begin{proof}
If $v \in \mc{V}^G$, then in particular $\vert G.v\vert =1$, so the statement of the Lemma coincides with the statement of \Cref{L:G-fixpoint-pivot-count}. Suppose now that $v \notin \mc{V}^G$.
We then apply \Cref{L:G-fixpoint-pivot-count} to a graph obtained from $\mc{D}$ by adding one vertex and connecting it with all vertices in $G.v$:
Let $v_0$ denote an extra vertex, not contained in $\mc{V}$, and set $\mc{V}_0={\mc V}\cup\{v_0\}$ and 
$\mc{E}_0=\mc{E}\cup\{\{v_0,gv\}\mid g\in G\}$.
We obtain that $\mc{D}_0=(\mc{E}_0,\mc{V}_0)$ is a connected graph with $\beta(\mc{D}_0)=\beta(\mc{D})+|G.v|-1$.
By letting $gv_0=v_0$ for all $g\in G$ we extend the $G$-action on $\mc{D}$ to a $G$-action on $\mc{D}_0$.
Note that if $v \notin \mc{V}^G$, then none of the vertices in $G.v$ is part of a rigidity in $\mc{D}_0$, and $\{v_0\}$ is a rigidity of $\mc{D}_0$. Otherwise The connected component of $\mc{D}^G_0$ that contains $v_0$ and $v$ is a rigidity of $\mc{D}_0$, and every vertex $G.v$ is contained in at most one rigidity of $\mc{D}$.
This explains the stated inequality in the case where $v \notin \mc{V}^G$.
\end{proof}

We denote by $\mfaktor{G}{\mc{D}_G}$ the set of $G$-orbits
in $\mc{D}_G$, i.e. the set of $G$-orbits of $G$-rigidities in the $G$-graph $\mc{D}$.

\begin{thm}\label{T:pivot-betti-bound}
Assume that $\mc{D}_G\neq \emptyset$.
 Let $d=\min\{|G/\mathsf{stab}_G(\mc{R})|\mid \mc{R}\in\mc{D}_G\}$.
Then $\vert \mc{D}_G \vert\leq \beta(\mc{D})+2d-1\,.$
Furthermore, $$\left\vert \mfaktor{G}{\mc{D}_G} \right\vert \leq \frac{1}d(\beta(\mc{D})-1)+2.$$
\end{thm}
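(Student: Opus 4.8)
The plan is to prove the first inequality by separating the cases $d=1$ and $d\geq 2$, and then to obtain the second inequality as a formal consequence of the first.

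First, a rigidity $\mc{R}$ satisfies $|G.\mc{R}|=1$ exactly when it is $G$-invariant, i.e.\ when $\mc{R}\in\mc{D}^G_G$; hence $d=1$ is equivalent to $\mc{D}^G_G\neq\emptyset$, and in that case \Cref{L:G-fixpoint-pivot-count} gives at once $|\mc{D}_G|\leq\beta(\mc{D})+1=\beta(\mc{D})+2d-1$. So from now on assume $d\geq 2$; then $\mc{D}^G_G=\emptyset$, and since every connected component of $\mc{D}^G$ would be a $G$-invariant rigidity, $\mc{D}$ has no $G$-fixed vertex. Fix a rigidity $\mc{R}_0\in\mc{D}_G$ with $|G.\mc{R}_0|=d$, let $H_0$ be its rigidifier group and $S_0=\mathrm{Stab}_G(\mc{R}_0)$ its full stabilizer; since the vertices of $\mc{R}_0$ all have stabilizer $H_0$ and $S_0$ permutes them, one gets $H_0\trianglelefteq S_0$, $[G:S_0]=d$, and we put $k=[S_0:H_0]$. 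For any $v\in\mc{V}_{\mc{R}_0}$ we have $\mathrm{Stab}_G(v)=H_0$, hence $|Gv|=dk$; moreover, since distinct rigidities are disjoint, a rigidity $\mc{R}$ satisfies $\mc{V}_\mc{R}\cap Gv\neq\emptyset$ if and only if $g^{-1}\mc{R}=\mc{R}_0$ for some $g\in G$, i.e.\ $\mc{R}\in G.\mc{R}_0$; thus exactly $d$ rigidities meet $Gv$. \Cref{L:pivot-betti-bound} then yields
$$|\mc{D}_G| \;=\; \bigl|\{\mc{R}\in\mc{D}_G:\mc{V}_\mc{R}\cap Gv=\emptyset\}\bigr|+d \;\leq\; \beta(\mc{D})+|Gv|-1+d \;=\; \beta(\mc{D})+(k+1)d-1 ,$$
which is exactly the desired bound when $k=1$.

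It therefore remains to reduce the general case to the case $k=1$, that is, to the case where the minimal-orbit rigidities are singular (as in the situation of \cite{BG24}); equivalently, one must exploit the extra room provided by the internal symmetry group $S_0/H_0$ acting on $\mc{R}_0$. The natural attempt is to modify $\mc{D}$ — for instance by collapsing a minimal-orbit rigidity orbit to points — in a way that turns those rigidities into singular ones with rigidifier equal to stabilizer while not increasing $\beta$, and then to apply the displayed estimate to the modified graph. I expect this step to be the main obstacle: a careless collapse can behave badly (it may merge several rigidities into a single larger one, simultaneously lowering $d$), so one needs either a precise bookkeeping of the effect of the modification on $\mc{D}_G$, its $G$-orbit structure and $\beta(\mc{D})$, or an induction in the spirit of the proof of \Cref{L:pivot-betti-bound}, to guarantee that the estimate can only improve under the modification.

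Finally, the second inequality follows from the first: by the very definition of $d$, every $G$-orbit in $\mc{D}_G$ has at least $d$ elements, so $|\mc{D}_G|\geq d\,\bigl|\mfaktor{G}{\mc{D}_G}\bigr|$, and therefore
$$\Bigl|\mfaktor{G}{\mc{D}_G}\Bigr| \;\leq\; \frac{|\mc{D}_G|}{d} \;\leq\; \frac{\beta(\mc{D})+2d-1}{d} \;=\; \frac1d\bigl(\beta(\mc{D})-1\bigr)+2 .$$
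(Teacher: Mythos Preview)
Your handling of the case $d=1$, the case $d\geq2$ with $k=[S_0:H_0]=1$, and the derivation of the second inequality from the first are all correct and coincide with the paper's argument. You have also put your finger on a genuine subtlety: applying \Cref{L:pivot-betti-bound} verbatim to a vertex $v\in\mc{V}_{\mc{R}_0}$ gives only $|\mc{D}_G|\leq\beta(\mc{D})+(k+1)d-1$, since $|Gv|=[G:H_0]=dk$ rather than $d$. The paper's one-line proof cites \Cref{L:pivot-betti-bound} without addressing this, so taken literally it, too, only covers $k=1$; you have not missed anything in the paper --- you have noticed something the paper glosses over.

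Nonetheless your proof is incomplete, because you stop short of closing the $k>1$ case. The clean fix is a $G$-equivariant variant of the construction behind \Cref{L:pivot-betti-bound}: adjoin a $G$-fixed vertex $v_0$, add an edge from $v_0$ to \emph{every} vertex of each $\mc{R}\in G.\mc{R}_0$, and delete all edges internal to each such $\mc{R}$. The resulting graph $\mc{D}_1$ is connected (reroute through $v_0$ any path segment that used a deleted edge), and
\[
\beta(\mc{D}_1)=\beta(\mc{D})+d-1-\sum_{\mc{R}\in G.\mc{R}_0}\beta(\mc{R})\;\leq\;\beta(\mc{D})+d-1.
\]
Since $d\geq2$ forces $\mc{V}^G=\emptyset$, the singleton $\{v_0\}$ is a $G$-fixed rigidity of $\mc{D}_1$. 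Every rigidity $\mc{R}'\in\mc{D}_G\setminus G.\mc{R}_0$ survives intact in $\mc{D}_1$: as $\mc{R}'$ is a full connected component of $\mc{D}^{H'}$ disjoint from each $\mc{R}\in G.\mc{R}_0$, all deleted and added edges lie in other components of $\mc{D}^{H'}$, so $\mc{R}'$ remains a component of $(\mc{D}_1)^{H'}$ with unchanged vertex stabilizers. Now \Cref{L:G-fixpoint-pivot-count} applied to $\mc{D}_1$ gives $1+|\mc{D}_G|-d\leq|(\mc{D}_1)_G|\leq\beta(\mc{D}_1)+1\leq\beta(\mc{D})+d$, which is the desired bound. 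Your proposed collapse would also work, but the concern you raise --- that the collapsed vertex acquires the larger stabilizer $S_0$ and may thereby land in fixed-point subgraphs it previously avoided, merging components --- is precisely why keeping the original vertices and instead stripping the internal edges is the cleaner route: it leaves every vertex stabilizer unchanged.
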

\begin{proof}
Let $\mc{R}_0 \in \mc{D}_G$ such that its orbit $G.\mathcal{R}_0=\{g.\mc{R}_0 \mid g \in G\}$ in $\mc{D}_G$ has cardinality $d$.

It follows by \Cref{L:pivot-betti-bound} that $$|\mc{D}_G|=|\mc{D}_G \setminus G.\mc{R}_0|+d\leq \beta(\mc{D})+2d-1\,.$$
Then $\vert \mfaktor{G}{\mc{D}_G}\vert \cdot d\leq |\mc{D}_G|\leq \beta(\mc{D})+2d-1$ and
hence $\vert \mfaktor{G}{\mc{D}_G}\vert \leq \frac{1}d(\beta(\mc{D})-1)+2$.
\end{proof}

\begin{cor}\label{border-line vorteces are rational}
 $\left\vert \mfaktor{G}{\mc{D}_G} \right\vert \leq \beta(\mc{D})+1$, and
if equality holds  then $ \mc{D}^G_G={\mc{D}_G}$ or  $\beta(\mc{D})=1$ and $\mc{D}^G_G = \emptyset$.
\end{cor}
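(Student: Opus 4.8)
The plan is to combine the two bounds from \Cref{T:pivot-betti-bound} to squeeze the quantity $d := \min\{|G.\mc{R}| \mid \mc{R} \in \mc{D}_G\}$ into a narrow range, and then analyze the boundary case. Starting from $\left\vert \mfaktor{G}{\mc{D}_G} \right\vert \leq \frac{1}{d}(\beta(\mc{D})-1)+2$, I would first treat the trivial case $\beta(\mc D)=0$: by \Cref{pivots in trees} we have $|\mc D_G|\le 1$, hence $\left\vert \mfaktor{G}{\mc{D}_G}\right\vert \le 1 = \beta(\mc D)+1$, and if equality holds the unique rigidity is $G$-fixed (a single orbit of size one), so $\mc D_G^G = \mc D_G$, consistent with the claim. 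So assume $\beta(\mc D)\ge 1$. If $d=1$ then the first inequality gives directly $\left\vert \mfaktor{G}{\mc{D}_G}\right\vert \le \beta(\mc D)+1$. If $d\ge 2$, then $\frac{1}{d}(\beta(\mc D)-1)+2 \le \frac{1}{2}(\beta(\mc D)-1)+2 = \frac{\beta(\mc D)+3}{2} \le \beta(\mc D)+1$, the last step using $\beta(\mc D)\ge 1$. Either way the first assertion follows.

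For the equality case, suppose $\left\vert \mfaktor{G}{\mc{D}_G}\right\vert = \beta(\mc D)+1$ with $\beta(\mc D)\ge 1$. Revisiting the two cases: if $d\ge 2$ we need $\frac{\beta(\mc D)+3}{2} \ge \beta(\mc D)+1$, forcing $\beta(\mc D) \le 1$, hence $\beta(\mc D)=1$; and then $\frac{1}{d}(\beta(\mc D)-1)+2 = 2 = \beta(\mc D)+1$ holds for every $d$, but I must still check whether $\mc D_G^G$ can be nonempty. If some rigidity $\mc R \in \mc D_G^G$, then its $G$-orbit is $\{\mc R\}$, a size-one orbit, so $d=1$, contradicting $d\ge 2$; hence $\mc D_G^G = \emptyset$, which is exactly the second alternative. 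If instead $d=1$, pick $\mc R_0 \in \mc D_G$ with $|G.\mc R_0|=1$, i.e.\ $\mc R_0 \in \mc D_G^G$; then $\mc D_G^G \neq \emptyset$, so \Cref{L:G-fixpoint-pivot-count} applies and gives $|\mc D_G| \le \beta(\mc D)+1$. On the other hand $\left\vert \mfaktor{G}{\mc{D}_G}\right\vert \le |\mc D_G|$, and equality $\left\vert \mfaktor{G}{\mc{D}_G}\right\vert = \beta(\mc D)+1$ forces both inequalities to be equalities, in particular $\left\vert \mfaktor{G}{\mc{D}_G}\right\vert = |\mc D_G|$, which means every $G$-orbit in $\mc D_G$ is a singleton, i.e.\ every rigidity is $G$-fixed: $\mc D_G^G = \mc D_G$.

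The main obstacle I anticipate is the bookkeeping in the $d=1$, $\beta(\mc D)=1$ overlap: a priori one could have $d=1$ and still $\mc D_G^G \subsetneq \mc D_G$, so I must be careful to derive the equality $\left\vert \mfaktor{G}{\mc{D}_G}\right\vert = |\mc D_G|$ from the chain of inequalities $\beta(\mc D)+1 = \left\vert \mfaktor{G}{\mc{D}_G}\right\vert \le |\mc D_G| \le \beta(\mc D)+1$ rather than assume it. Once that chain is pinned down, the conclusion $\mc D_G^G = \mc D_G$ is immediate since a quotient and the set coincide in cardinality only when the action is trivial on it. I would present the argument as a short case split on $d$ (with the preliminary reduction $\beta(\mc D)\ge 1$), invoking \Cref{pivots in trees}, \Cref{L:G-fixpoint-pivot-count}, and \Cref{T:pivot-betti-bound} as the only inputs.
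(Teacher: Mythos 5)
Your proof is correct and follows essentially the same route as the paper's: case split on $d=\min\{|G.\mc{R}|\}$, invoking \Cref{T:pivot-betti-bound} for $d\geq 2$ and \Cref{L:G-fixpoint-pivot-count} plus the obvious inequality $|\mfaktor{G}{\mc{D}_G}|\leq|\mc{D}_G|$ for $d=1$. The only cosmetic difference is that you dispose of the tree case $\beta(\mc{D})=0$ up front via \Cref{pivots in trees}, whereas the paper folds it into the $d\geq 2$ branch; your ordering avoids the vacuous subcase and reads a bit more cleanly, but it is the same argument.
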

\begin{proof}
We may assume $\mathcal{D}_{G} \neq \emptyset $, and we set
$d=\mathsf{min}\{|G/\mathsf{stab}_G(\mathcal{R})|\mid \mathcal{R}\in \mathcal{D}_{G}\}$. Note that $d=1$ if and only if
$\mathcal{D}_{G}^{G} \neq \emptyset$.

So let us first assume that $d\geq 2$. 
By \Cref{T:pivot-betti-bound}, we have that $\vert \mfaktor{G}{\mc{D}_G}\vert  \leq \frac{1}d(\beta(\mc{D})-1)+2$.
If $\beta(\mc{D})\geq 2$  then $\frac{1}{d}(\beta(\mc{D})-1)+2<\beta(\mc{D})+1$, whereby $\vert \mfaktor{G}{\mc{D}_G}\vert \leq \beta(\mc{D})$.
If $\beta(\mc{D})=1$  then $\vert \mfaktor{G}{\mc{D}_G}\vert \leq 2=\beta(\mc{D})+1$.
If $\beta(\mc{D})=0$, then $\mc{D}$ is a tree, and as $\mc{D}_G\neq\emptyset$, it follows by \Cref{pivots in trees} that $|\mc{D}_G|=1=\beta(\mc{D})+1$,  in particular  $\mc{D}_G =\mc{D}_G^G$.

Assume finally that $d= 1$. It follows by \Cref{L:G-fixpoint-pivot-count} that $|\mc{D}_G|\leq \beta(\mc{D}) + 1$, whereby in particular $\vert \mfaktor{G}{\mc{D}_G}\vert \leq \beta(\mc{D})$ if $G$ acts nontrivially on $\mc{D}_G$, so  $\vert \mfaktor{G}{\mc{D}_G}\vert = \beta(\mc{D}) + 1$  implies that $\mc{D}_G=\mc{D}_G^G$. 
\end{proof}

A \textit{bipartite simple graph} is a simple graph that allows a coloring of the vertices by two colors, say cyan and purple, such that every edge is between  vertices of distinct color.

Let us from now on assume that the connected finite simple graph $\mc{D}=(\mathcal{V},\mathcal{E})$ is bipartite. 
Let $\mc{D}'=(\mathcal{V}',\mathcal{E}')$ be another finite simple bipartite graph.
 A \textit{morphism} $\varphi: \mc{D}' \to \mc{D}$ \textit{of bipartite finite simple graphs} $\mathcal{D}$ and $\mathcal{D}'$ is a color-respecting map $\varphi:\mathcal{V}' \to \mathcal{V}$ on the set of vertices such that $\{\varphi(v),\varphi(w)\}\in \mathcal{E}$ whenever $\{v,w \}\in \mathcal{E}'$.
 We say that $\varphi$ is an \textit{epimorphism} if furthermore $\varphi: \mc{V}' \to \mc{V}$ is surjective and for every edge $\{v',w'\} \in \mc{E}'$ there is $\{v,w\} \in \mathcal{E}$ such that $\varphi(v)=v'$ and $\varphi(w)=w'$.
 Write $\mathcal{V}= \mathcal{C} \cup \mathcal{P}$ and $\mathcal{V}'= \mathcal{C}' \dot{\cup} \mathcal{P}'$, where $\mathcal{C}, \mathcal{C}' $ denote the set of cyan vertices and $\mathcal{P},\mathcal{P}'$ the set of purple vertices, respectively.
For $\Gamma \in \mc{C}$ set $e_\Gamma= \vert \varphi^{-1}(\Gamma)\vert$. 
For $x \in \mc{P}$, set $i_x=\vert \varphi^{-1}(x)\vert$ and $$e_x=\frac{1}{\deg(x)} \sum_{\Gamma \in \mc{C} \atop \{\Gamma,x\} \in \mathcal{E}} e_\Gamma.$$

\begin{lem}\label{Betti-epimorphism} Assume that $\mc{D}'$ is connected.
    Let $\varphi: \mc{D}' \to \mc{D}$ be an epimorphism of bipartite finite simple graphs such that every $x\in \mc{P}$ has degree $2$ and moreover $i_x \geq e_\Gamma$ for both $\Gamma \in \mc{C}$ such that $\{x, \Gamma\} \in \mathcal{E}$.
    Then $\beta(\mc{D}) \leq \beta(\mc{D}')$.
\end{lem}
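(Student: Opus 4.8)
The plan is to deduce $\beta(\mc{D})\leq\beta(\mc{D}')$ from an Euler-characteristic count, which reduces the claim to a combinatorial inequality among the fibre sizes $i_x$ and $e_\Gamma$, and then to prove that inequality by a spanning-tree argument.

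\emph{Reduction to a fibre-size inequality.} Each of the $|\mc{P}|$ purple vertices of $\mc{D}$ is incident to exactly two edges, and since $\mc{D}$ is bipartite every edge is incident to exactly one purple vertex; hence $|\mc{E}|=2|\mc{P}|$ and $\beta(\mc{D})=1+|\mc{P}|-|\mc{C}|$. Likewise each purple vertex of $\mc{D}'$ is incident to at least two edges, so $|\mc{E}'|\geq 2|\mc{P}'|$ and $\beta(\mc{D}')\geq 1+|\mc{P}'|-|\mc{C}'|$. Because $\varphi$ respects colours and is surjective on vertices, $|\mc{C}'|=\sum_{\Gamma\in\mc{C}}e_\Gamma$, $|\mc{P}'|=\sum_{x\in\mc{P}}i_x$, and $i_x,e_\Gamma\geq 1$ for all $x,\Gamma$. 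Substituting, it suffices to prove
\[\sum_{x\in\mc{P}}(i_x-1)\ \geq\ \sum_{\Gamma\in\mc{C}}(e_\Gamma-1).\]
If $\beta(\mc{D})=0$ nothing is needed, since $\mc{D}'$ connected gives $\beta(\mc{D}')\geq 0$; so assume $\beta(\mc{D})\geq 1$.

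\emph{A spanning tree on the cyan vertices.} Form the graph $G$ with vertex set $\mc{C}$ having one edge $\epsilon_x$ between the two cyan neighbours of each $x\in\mc{P}$ (these are distinct, as $x$ has degree $2$ in $\mc{D}$); thus $x\mapsto\epsilon_x$ is a bijection $\mc{P}\to E(G)$. Contracting all purple vertices of $\mc{D}$ exhibits $G$ as connected, with $|V(G)|=|\mc{C}|$ and $|E(G)|=|\mc{P}|$, so $\beta(G)=\beta(\mc{D})\geq 1$. Fix a spanning tree $T$ of $G$; as $\beta(G)\geq 1$ there is an edge $\epsilon^\ast=\epsilon_{x^\ast}\notin E(T)$. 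Pick an endpoint $\Gamma_0$ of $\epsilon^\ast$ and root $T$ at $\Gamma_0$; then $\Gamma\mapsto p(\Gamma)$, with $p(\Gamma)$ the edge of $T$ from $\Gamma$ to its parent, is a bijection $\mc{C}\setminus\{\Gamma_0\}\to E(T)$. Writing $x(\epsilon)\in\mc{P}$ for the vertex with $\epsilon_{x(\epsilon)}=\epsilon$, decompose $\sum_{x\in\mc{P}}(i_x-1)=\sum_{\epsilon\in E(G)}(i_{x(\epsilon)}-1)$ along $E(G)=E(T)\sqcup\{\epsilon^\ast\}\sqcup R$, where $R$ has $\beta(\mc{D})-1\geq 0$ elements. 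If $\epsilon=p(\Gamma)$ with $\Gamma\neq\Gamma_0$, then $\Gamma$ is an endpoint of $\epsilon$, so the hypothesis gives $i_{x(\epsilon)}\geq e_\Gamma$; since $\Gamma_0$ is an endpoint of $\epsilon^\ast$, likewise $i_{x(\epsilon^\ast)}\geq e_{\Gamma_0}$; for $\epsilon\in R$ we only use $i_{x(\epsilon)}-1\geq 0$. Adding up, $\sum_{x\in\mc{P}}(i_x-1)\geq\sum_{\Gamma\neq\Gamma_0}(e_\Gamma-1)+(e_{\Gamma_0}-1)=\sum_{\Gamma\in\mc{C}}(e_\Gamma-1)$, as required.

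\emph{Expected main difficulty.} The Euler-characteristic reduction is routine, and the estimate is immediate once the tree is set up, so the real point is the choice of root. Summing the inequality $i_x\geq e_\Gamma$ naively over all edges $\{x,\Gamma\}\in\mc{E}$ double-counts and only yields the desired inequality up to a factor of $2$, which is too weak; rooting the spanning tree at an endpoint of a non-tree edge is precisely the device that charges the one ``unmatched'' cyan vertex $\Gamma_0$ to that non-tree edge rather than wasting it, making the bound sharp. (We use throughout that the purple vertices of $\mc{D}'$ also have degree $2$, which is what secures $|\mc{E}'|\geq 2|\mc{P}'|$.)
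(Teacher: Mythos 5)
Your proof is correct, and it takes a genuinely different route from the paper's. Both proofs reduce in the same way to the counting inequality $\sum_{x\in\mc{P}}(i_x-1)\geq\sum_{\Gamma\in\mc{C}}(e_\Gamma-1)$ (equivalently $|\mc{C}'|-|\mc{C}|\leq|\mc{P}'|-|\mc{P}|$), but then diverge. The paper argues by induction on $|\mc{V}|$: if some cyan vertex $\Gamma$ has degree one, it and its unique purple neighbour $x$ are deleted and the induction hypothesis plus $i_x\geq e_\Gamma$ finishes; the base case (all cyan degrees $\geq 2$) is handled by the averaging bound $\sum_x(e_x-1)=\tfrac12\sum_\Gamma\deg(\Gamma)(e_\Gamma-1)\geq\sum_\Gamma(e_\Gamma-1)$ using the auxiliary quantity $e_x=\tfrac12(e_\Gamma+e_{\widetilde\Gamma})$. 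You instead pass to the multigraph $G$ on $\mc{C}$ obtained by turning each degree-$2$ purple vertex into an edge, pick a spanning tree, and observe that since $\beta(G)=\beta(\mc{D})\geq 1$ you may root the tree at an endpoint $\Gamma_0$ of a non-tree edge $\epsilon^\ast$; the parent map then injects $\mc{C}\setminus\{\Gamma_0\}$ into $E(T)$, $\epsilon^\ast$ takes care of $\Gamma_0$, and each assignment lies in a position where $i_x\geq e_\Gamma$ applies by hypothesis (since $\Gamma$ is an endpoint of the assigned edge). This removes both the case split and the induction in one stroke and makes the role of $\beta(\mc{D})\geq 1$ transparent; the paper's version, on the other hand, never needs to form the auxiliary contracted graph or choose a spanning tree, and its averaging step yields the slightly sharper intermediate quantity $e_x$ that the paper had already defined. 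Both arguments are elementary and of comparable length; yours is arguably the cleaner combinatorial mechanism, while the paper's stays closer to the local data $e_x$ already in play.

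One small remark: like the paper, you implicitly assume that the purple vertices of $\mc{D}'$ also have degree (at least) $2$ in order to get $|\mc{E}'|\geq 2|\mc{P}'|$. This is not literally contained in the lemma's stated hypotheses, but it is how the paper itself reads the setup (and it holds in the intended application to bipartite dual graphs), so it is not a gap relative to the paper's own proof.
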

\begin{proof} Note that the connectedness of $\mc{D}'$ implies the connectedness of $\mc{D}$.
If $\beta(\mc{D})=0$, there is nothing to show. Hence, we may assume that $\mc{D}$ is not a tree.
Since for every purple vertex in $\mc{D}$ and $\mc{D}'$ there are two edges, we have that 
$\beta(\mathcal{D})=|\mc{P}|-|\mc{C}|+1$ and
$\beta(\mc{D}')=|\mc{P}'|-|\mc{C}'|+1$. 
In order to show $\beta(\mathcal{D})\leq \beta(\mathcal{D}')$, we need to show that $|\mc{C}'|-|\mc{C}|\leq |\mc{P}'|-|\mc{P}|.$
We first show this in the case where every cyan vertex in $\mathcal{D}$ has degree at least $2$, and we will not require $\mc{D}'$ to be connected in this case. We clearly have that 
$$|\mc{P}'|-|\mc{P}| = \sum_{x\in\mc{P}}(i_x-1)\geq \sum_{x\in\mc{P}}(e_x-1)\geq \sum_{\Gamma\in\mc{C}}(e_\Gamma-1)=|\mc{C}'|-|\mc{C}|.$$
The last inequality may be better understood by considering $$\sum_{x\in\mc{P}}(e_x-1)= \sum_{x\in\mc{P}} \sum_{\Gamma \in \mc C \atop \{\Gamma, x\} \in \mathcal{E}}\frac{1}{2}(e_{\Gamma}-1) \geq \sum_{\Gamma\in\mc{C}}(e_\Gamma-1),$$ where the equality is due to the fact that for every $x \in \mc{P}$ we have $\deg(x)=2$, and hence there are exactly two distinct $\Gamma \in \mc{C}$ such that $\{x, \Gamma\} \in \mc{E}$, and the inequality holds since for every $\Gamma \in \mc{C}$ there exist at least two distinct $x \in \mc{P}$ such that $\{x,\Gamma\} \in \mc{E}$, and since $e_\Gamma -1 \geq 0$ as $\varphi$ is an epimorphism.
We will now prove the statement for the general situation, that is, for a connnected graph $\mc{D}$ that may have cyan vertices of degree one. We will actually not require that $\mc{D}'$ is connected. We will show the statement by induction on the number of vertices of $\mc{D}$. If every cyan vertex in $\mc{D}$ has degree at least $2$, we do not need to involve the induction hypothesis since we have just proven this case. Otherwise, let $\Gamma \in \mc{C}$ be a cyan vertex of degree one, and $x \in \mc{P}$ its unique purple neighbouring vertex. Let $\widetilde{\mathcal{D}}$ denote the full subgraph of $\mathcal{D}$ spanned by all vertices except for $\Gamma$ and $x$, and let $\widetilde{\mathcal{D}'}$ denote the full subgraph of $\mathcal{D}'$ spanned by all vertices except $\varphi^{-1}(\Gamma)$ and $\varphi^{-1}(x)$.
Since both graphs are bipartite, and $\wt{\mc{D}}$ is connected, and the restriction 
$\varphi: \wt{\mc{D}'} \to \wt{\mc{D}}$ is an epimorphism, we have by induction hypothesis that
$$\vert \wt{\mc{C}'}\vert - \vert \wt{\mc{C}} \vert \leq \vert \wt{\mc{P}'}\vert - \vert \wt{\mc{P}}\vert,$$ where $\wt{\mc{C}}, \wt{\mc{C}'}$ and $\wt{\mc{P}}, \wt{\mc{P}'}$ denote the set of cyan and purple vertices in $\wt{\mc{D}}, \wt{\mc{D}'}$, respectively.
On the other hand, we clearly have that
$|\mc{C}'|-|\mc{C}| = \vert \wt{\mc{C}'}\vert - \vert \wt{\mc{C}} \vert + e_\Gamma - 1$
and 
$|\mc{P}'|-|\mc{P}| = \vert \wt{\mc{P}'}\vert - \vert \wt{\mc{P}} \vert + i_x - 1$. The inequality $\vert \wt{\mc{C}'}\vert - \vert \wt{\mc{C}} \vert \leq \vert \mc{P}'\vert - \vert \mc{P}\vert$ now follows from the assumption that $i_x \geq e_\Gamma$.
\end{proof}

\section{Galois-symmetries in the reduction of a regular model with normal crossings}\label{dualgraphsection}

For this section, let $K$ be a field of characteristic zero, $T \subseteq K$ a discrete valuation ring of $K$ and let $k$ denote the residue field of $T$. Let $F/K$ be a function field in one variable, that is, a finitely generated field extension of transcendence degree one. Suppose moreover that $K$ is relatively algebraically closed in $F$.

We call a regular $2$-dimensional integral regular scheme $\mc{X}$ together with a flat projective morphism $\mc{X} \to \Spec(T)$ a \textit{regular model for $F/T$}, if the function field of $\mc{X}$ is $K$-isomorphic to $F$.
We denote by  $\mc{X}_0=\mc{X} \times_T K$ the \textit{generic fiber of $\mc{X}$ over $T$} and by $\mc{X}_s=\mc{X} \times_T k$ the \textit{special fiber of $\mc{X}$ over $T$}. Note that the morphism $\mc{X}_0 \to \mc{X}$ given by projection is an open immersion, and the morphism $\mc{X}_s \to \mc{X}$ is a closed immersion.
By \cite[Lemma~3.2.14]{Liu}, $\mc{X}_0$ is a geometrically integral regular projective curve over $K$ and $\mc{X}_s$ is a geometrically connected projective curve over $k$ by \cite[Corollary 8.3.6]{Liu}.
 We call $\mc{X}$ a \textit{regular model with normal crossings for $F/T$} if the special fiber $\mc{X}_s$, seen as a divisor of $\mc{X}$, is a normal crossings divisor, that is, if it has normal crossing at every closed point $P \in \mc{X}$, that is, if
the irreducible factors of the element in the local ring $\mathcal{O}_{\mc{X},P}$ of $\mc{X}$ at $P$ that define $\mc{X}_s$ locally at $P$ (or more precisely, the pull-back of the divisor $\mc{X}_s$ in $\Spec(\mathcal{O}_{\mc{X},P})$), is part of a minimal set of generators of the maximal ideal $\mathfrak{m}_{\mc{X},P}$ of $\mathcal{O}_{\mc{X},P}$.
Since $\mathrm{char}(K)=0$, a regular model for $F/T$ with normal crossings exists, see \cite[Proposition 10.1.8]{Liu}.

From now on, let $\mathcal{X}$ denote a fixed regular model with normal crossing of $F/T$. We recall that the generic point of every irreducible component $\Gamma$ of $\mc{X}_s$ is of codimension one in $\mc{X}$, whereby its local ring defines a discrete valuation ring of $F$ centered in $T$ at its maximal ideal, and hence it defines a discrete valuation $v_{\Gamma}$ on $F$ extending 
the discrete valuation $v_T: K^\times \to K^\times/\mathcal{O}^\times \simeq \mathbb{Z}$ on $K$ induced by $T$. Moreover, the residue field of each $v_\Gamma$ is $k$-isomorphic to the function field $k(\Gamma)$ of $\Gamma$, and thereby a non-algebraic field extension of $k$.  We denote by $\Omega_T(F)$ the set of all valuation extensions of $v_T$ whose residue field is a non-algebraic extension of $k$. 
Note that by \cite[Proposition 3.1]{BGVG14}, the residue field $\kappa_w$ of any $w \in \Omega_T(F)$ is a function field in one variable over $k$. We denote by $\ell_w$ the relative algebraic closure of $k$ in $\kappa_w$, and we say that $\kappa_w/\ell_w$ \textit{admits a rational place} if there exists a valuation on $\kappa_w$ that is trivial on $\ell_w$ and with residue field $\ell_w$, or equivalently, if the smooth projective curve $C$ over $\ell_w$ with function field $\kappa_w/\ell_w$ admits an $\ell_w$-rational point. 
We set $$\Omega^{\mathrm{rat}}_T(F):=\{w \in \Omega_T(F) \mid \kappa_w/\ell_w \text{ admits a rational place }\}.$$
\begin{rem}\label{algebrorational}
Let $w \in \Omega_T(F)$ be such that $w\neq v_\Gamma$ for every irreducible component $\Gamma$ of $\mc{X}_s$. Then, by \cite[Proposition 3.7]{BGVG14}, we have that
$\kappa_w \simeq \ell(t)$ for some finite extension $\ell/k$ and a transcendental element $t$ over $\ell$. In particular $g(\kappa_w/K)=0$ and $w \in  \Omega^{\text{rat}}_T(F)$.
\end{rem} 

For every closed connected subscheme $C \subset \mc{X}_s$,  the \textit{dual graph of $C$} is defined in \cite[Definition 10.1.48]{Liu} as the graph whose vertices correspond to the irreducible components  of $C$, and between any two distinct irreducible components $\Gamma_1, \Gamma_2$ of $C$, the cardinality of the set of edges between both vertices is defined to be the intersection number $\Gamma_1 \cdot \Gamma_2:=\sum_{P\in \Gamma_1 \cap \Gamma_2} (\Gamma_1 \cdot \Gamma_2)_P$, where $(\Gamma_1 \cdot \Gamma_2)_P= \mathrm{length}(\mathcal{O}_{\mc{X},P}/(p_1,p_2))$, and where $p_1,p_2 \in \mathcal{O}_{\mc{X},P}$ are  irreducible elements that define $\Gamma_1$ and $\Gamma_2$ locally. The positive integer $(\Gamma_1 \cdot \Gamma_2)_P$ is called the local intersection number of $\Gamma_1$ and $\Gamma_2$ at $P$. In general, this definition does not yield a simple graph, even when,  
as is our case, $\mc{X}_s$, and thus also $C$, is a strict normal crossing divisor of $\mc{X}$. Nevertheless,
 the local intersection number at a point $P \in \mc{X}_s$ where any two irreducible components of $\mc{X}_s$  intersect, is one for those two components,  and zero for any other pair of irreducible components of $C$, see \cite[Proposition 9.1.8]{Liu}. In particular, the local  parameters of the irreducible components at $P$ generate the maximal ideal of the local ring at $P$, and at most two irreducible components of $C$ intersect at $P$, so in our case, this graph is a simple graph unless two irreducible components of $C$ intersect in more than one point.  Moreover, every edge between two vertices corresponds to a unique intersection point between both corresponding irreducible components of $C$. 
 
 We now adapt the notion of the dual graph slightly, by assigning a colour, say cyan, to the vertices corresponding to irreducible components  and introducing an additional set of vertices of a different color, say purple, where a purple vertex corresponds to an intersection point between two irreducible components of $C$, and the edges of the adapted dual graph are pairs consisting of an irreducible component $\Gamma$ of $C$ together with an intersection point of $\Gamma$ with a distinct irreducible component of $C$. This way we define indeed a  simple bipartite  graph $\mc{D}(C)$. We call this graph the \textit{bipartite dual graph of $C$}. Its Betti number clearly coincides with that of the dual graph as defined after \Cref{algebrorational}.
 Note that all purple vertices in the bipartite dual graph of $C$ have degree $2$.
 
\begin{rem}
We defined the Betti number at the beginning of the second section for finite simple graphs. However, the same formula is used more generally in the literature to define the Betti number for any finite graph, such as for example the dual graph of $C$. We also observe that the Betti number of the dual graph of $C$ coincides with the Betti number of its bipartite dual graph, as defined above.
\end{rem}
Let $T'$ be a maximal unramified valuation ring extension of $T$ inside an algebraic closure of $K$. Denote by $K'$ and $k'$  field of fraction and residue field of $T'$ respectively. We denote 
$\mc{X}':= \mc{X} \times_T T'$. 

\begin{lem}\label{bettiinequality}
 $\mc{X}'$ is a regular model with normal crossings for $FK' /T'$. Moreover $\beta(\mc{D}(\mc{X}_s))\leq \beta(\mc{D}(\mc{X}'_s))$.
\end{lem}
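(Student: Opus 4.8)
The plan is to treat the two assertions separately. For the first, that $\mc{X}'$ is a regular model with normal crossings for $FK'/T'$, I would argue entirely by base change. Note that $T'$ is itself a discrete valuation ring (a valuation ring with value group $\zz$), with field of fractions $K'$ and residue field $k'$; since $\car k = 0$, the residue field $k'$ is the separable---hence algebraic---closure $k^{\sep}$ of $k$, and the morphism $\Spec T' \to \Spec T$ is regular, being flat with geometrically regular fibres $K'$ (separable over $K$) and $k'$. Consequently $\mc{X}' \to \mc{X}$ is a regular morphism, so $\mc{X}'$ is regular; flatness and projectivity over $T'$ are inherited from $\mc{X}/T$; and $\mc{X}'$ is integral with function field $F \otimes_K K' = FK'$, using that $K$ is relatively algebraically closed in $F$ so that $F$ and $K'$ are linearly disjoint over $K$ (and then $K'$ is relatively algebraically closed in $FK'$). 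Finally $\mc{X}'_s = \mc{X}_s \times_k k'$, and since $\car k = 0$ the separable base change along $k'/k$ preserves reducedness and regularity of the irreducible components of $\mc{X}_s$ as well as transversality of their intersections, so $\mc{X}'_s$ is again a normal crossings divisor of $\mc{X}'$.

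For the Betti inequality, the plan is to build a morphism of bipartite graphs and apply \Cref{Betti-epimorphism}. The projection $\mc{X}' \to \mc{X}$ carries each irreducible component of $\mc{X}'_s$ onto an irreducible component of $\mc{X}_s$ and each intersection point of $\mc{X}'_s$ onto an intersection point of $\mc{X}_s$, and thus induces a color-respecting epimorphism of bipartite graphs $\varphi \colon \mc{D}(\mc{X}'_s) \to \mc{D}(\mc{X}_s)$; the source is connected because $\mc{X}'_s$, being the special fibre of the integral projective flat $T'$-scheme $\mc{X}'$, is connected. It then remains to read off the numbers $e_\Gamma$ and $i_x$. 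For a cyan vertex, i.e.\ a component $\Gamma$ of $\mc{X}_s$, let $\ell_\Gamma = H^0(\Gamma,\mc{O}_\Gamma)$ be its field of constants, a finite (separable, as $\car k = 0$) extension of $k$, equal to the relative algebraic closure of $k$ in $k(\Gamma)$ since $\Gamma$ is regular; the components of $\mc{X}'_s$ lying over $\Gamma$ are exactly the components of $\Gamma \times_k k'$, and since $\ell_\Gamma \otimes_k k'$ splits as a product of $[\ell_\Gamma : k]$ fields, we get $e_\Gamma = [\ell_\Gamma : k]$. For a purple vertex $x$, corresponding to a point $P$ in which two components $\Gamma$ and $\wt\Gamma$ of $\mc{X}_s$ meet, the points of $\mc{X}'_s$ over $P$ correspond to the factors of $\kappa(P) \otimes_k k'$, hence there are $[\kappa(P):k]$ of them; each such point lies on exactly one component over $\Gamma$ and one over $\wt\Gamma$, and these are distinct, so each is again a node of $\mc{X}'_s$, giving $i_x = [\kappa(P):k]$.

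To conclude, observe that $\ell_\Gamma = \mc{O}(\Gamma)$ maps to $\kappa(P)$ for every closed point $P \in \Gamma$, and this map is injective on the field $\ell_\Gamma$; therefore $[\ell_\Gamma : k] \le [\kappa(P):k]$. Hence for every purple vertex $x$ and every cyan neighbour $\Gamma$ of $x$ we have $i_x = [\kappa(P):k] \ge [\ell_\Gamma : k] = e_\Gamma$, which is exactly the hypothesis of \Cref{Betti-epimorphism}. That lemma then yields $\beta(\mc{D}(\mc{X}_s)) \le \beta(\mc{D}(\mc{X}'_s))$.

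The main obstacle I expect is the first step: verifying that regularity and, more delicately, the normal crossings structure of the special fibre survive base change along the (in general infinite) unramified extension $T \to T'$; this is precisely where residue characteristic zero is used. Once that is secured, the rest is bookkeeping---identifying $e_\Gamma$ with $[\ell_\Gamma:k]$ and $i_x$ with $[\kappa(P):k]$, together with the elementary embedding $\ell_\Gamma \hookrightarrow \kappa(P)$---after which \Cref{Betti-epimorphism} applies directly.
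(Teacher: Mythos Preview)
Your proof is correct and follows essentially the same route as the paper: establish that $\mc{X}'$ is regular with normal-crossings special fibre via base change (the paper argues slightly more concretely, using that $\pi:\mc{X}'\to\mc{X}$ is unramified so $\mf{m}_{\mc{X}',x}=\mf{m}_{\mc{X},\pi(x)}\mc{O}_{\mc{X}',x}$ and a direct uniformizer factorization $t=p_1^{n_1}p_2^{n_2}$), then apply \Cref{Betti-epimorphism} to the epimorphism $\mc{D}(\mc{X}'_s)\to\mc{D}(\mc{X}_s)$ with exactly your identifications $e_\Gamma=[\ell_\Gamma:k]$, $i_x=[\kappa(P):k]$ and the inequality coming from $\ell_\Gamma\hookrightarrow\kappa(P)$. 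One correction: this lemma lives in Section~3, where only $\car K=0$ is assumed (the extra hypothesis $\car k=0$ is imposed only from Section~4 onward), and indeed neither the paper's proof nor yours actually needs $\car k=0$---unramifiedness of $T'/T$ alone already makes $\Spec T'\to\Spec T$ ind-\'etale and forces regular parameters to pull back to regular parameters---so you should drop those two appeals to residue characteristic zero.
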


\begin{proof} Since $\mathcal{X}$ is flat projective over $T$, we have that $\mathcal{X}'$ is  flat projective  over $T'$, by \cite[Corollary 3.3.32]{Liu} and \cite[Proposition 4.3.3, (e)]{Liu}. As $\mathcal{X}'_0\simeq_{K'}\mathcal{X}_{0}\times_{K}K'$ and $\mathcal{X}'_s \simeq_{k'} \mathcal{X}_s \times_k k'$, we have that both are curves over $K'$  and $k'$, respectively, by \cite[Proposition 3.2.7]{Liu}. Hence $\dim(\mathcal{X}')=2.$ We claim furthermore that $\mathcal{X}'$ is regular. By \cite[Corollary 4.2.17]{Liu} it is enough to show regularity only for closed points. Let $x\in\mathcal{X}_s \hookrightarrow \mc{X}$ be a closed point. Since $T'$ is an unramified extension of $T,$ the  projection $\pi:\mathcal{X}'\to\mathcal{X}$ is unramified by \cite[Proposition 4.3.22]{Liu}. Hence $\mathfrak{m}_{\mathcal{X}',x}=\mathfrak{m}_{\mathcal{X},\pi(x)}\mathcal{O}_{\mathcal{X}',x},$ whereby the regularity of $\mathcal{O}_{\mathcal{X},\pi(x)},$ implies the regularity of $\mathcal{O}_{\mathcal{X}',x}.$ 
To show that furthermore $\mc{X}'$ has normal crossings at $x$,  let $t\in T$ denote a uniformizer of $T$. Since the closed subscheme $\mathcal{X}_{s}$ of $\mc{X}$ can be defined by the sheaf of ideals in $\mathcal{O}_{\mc{X}}$ generated by $t$, we have by the normal crossing property of $\mc{X}$ at $p(x)$ that $t=p_{1}^{n_{1}}p_{2}^{n_{2}}$ for some $n_{1},n_{2}\in\mathbb{N}$ and some irreducible elements $p_{1},p_{2}\in\mathcal{O}_{\mathcal{X},\pi(x)}$ that generate the maximal ideal $\mathfrak{m}_{\mathcal{X},\pi(x)}.$ Since $t$ is also a uniformizer of $T'$, we have that $\mathcal{X}'_{s}$ is the closed subscheme of $\mathcal{X}'$ defined by the sheaf of ideal generated by $t$ in $\mathcal{O}_{\mathcal{X}'}.$ Considering the equality $t=p_{1}^{n_{1}}p_{2}^{n_{2}}$ in $\mathcal{O}_{\mathcal{X}',x},$ and the fact that $p_{1}$ and $p_{2}$ also generate the maximal ideal $\mathfrak{m}_{\mathcal{X}',x}$ in $\mathcal{O}_{\mathcal{X}',x}$,  we conclude that $\mathcal{X}'_{s}$ has normal crossing at $x.$ 

The base change $\pi_s: \mc{X}'_s \to \mc{X}_s$ of $\pi$ induces a epimorphism  $\varphi: \mc{D}(\mc{X}'_s) \to \mc{D}(\mc{X}_s)$ of bipartite connected graphs. Recall that since both special fibers have normal crossing in  $\mc{X}$ and $\mc{X}'$, respectively, we have that every purple vertex in either graph has degree two.
Moreover, if $\Gamma$ is an irreducible component of $\mc{X}_s$ and $x \in \Gamma$ a point of intersection with another irreducible component, then we have that $\ell_\Gamma \subseteq \kappa(x)$, and thus in view of \Cref{Betti-epimorphism} that $i_x=[\kappa(x): k] \geq [\ell_\Gamma :k]= e_\Gamma$, whereby  $\beta(\mc{D}(\mc{X}_s))\leq \beta(\mc{D}(\mc{X}'_s))$.
\end{proof}

Let $G$ denote the absolute Galois group of $k$. It acts naturally by $k$-scheme automorphisms on $\mc{X}'_s$ via the canonical identification $\mc{X}'_s\simeq \mc{X}_s \times_k k'$. In particular, $G$  acts on the set of irreducible components and on the set of intersection points of distinct irreducible components. This also induces naturally a Galois action on the bipartite dual graph $\mc{D}(\mc{X}'_s)$.

\smallskip

We will now apply the general graph-theoretic symmetry considerations from the previous section to the bipartite dual graph $\mathcal{D}(\mc{X}'_s)$.
We endow any closed connected subset $Y \subseteq \mc{X}_s$ with its induced reduced subscheme structure $(Y,\mathcal{O}_Y)$, and we set $\ell_Y:=H^0(Y, \mc{O}_Y)$, i.e. the ring of global regular functions on $Y$. 
Since $Y$ is connected and reduced, we have that $\ell_Y$ is a finite field extension of $k$, see \cite[Corollary 3.3.21]{Liu}. We denote by $\mc{E}_Y$ the set of $k$-embeddings $\sigma: \ell_Y \to k'$.

\begin{lem}\label{decomposition base change} For any closed connected subset $Y$ of $\mc{X}_s$,  we have that  $$Y \times_k k' \simeq \bigsqcup_{\sigma \in \mc{E}_Y} Y \times_{(\ell_Y, \sigma)} k',$$
is a disjoint union of distinct closed connected subschemes $Y'_\sigma = Y \times_{(\ell_Y, \sigma)} k'$ of $\mc{X}'_s$. The natural $G$-action on $\mc{X}'_s$ induces a transitive $G$-action on $\{ Y'_\sigma \mid  \sigma \in \mc{E}_Y \}$ corresponding to the natural $G$-action on $\mc{E}_Y$ with stabilizers $\mathrm{stab}_{Y'_\sigma} = \mathrm{Gal}(k'/\sigma(\ell_Y))$ for $\sigma \in \mc{E}_Y$.
Moreover, if $Y$ is irreducible, then so is $Y'_\sigma$ for every $\sigma \in \mc{E}_Y$. 
\end{lem}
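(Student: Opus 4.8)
The approach is a base change in two stages along the factorisation $Y\to\Spec(\ell_Y)\to\Spec(k)$ of the structure morphism of $Y$, where the first arrow is the one induced by $\ell_Y=H^0(Y,\mc{O}_Y)$. Since the residue field $k$ has characteristic zero, $\ell_Y/k$ is a finite separable extension (it is finite by \cite[Corollary 3.3.21]{Liu}), and $k'$, being the residue field of a maximal unramified extension, is an algebraic closure of $k$ with $G=\Gal(k'/k)$. Consequently $\ell_Y\otimes_k k'\simeq\prod_{\sigma\in\mc{E}_Y}k'$, the factor indexed by $\sigma$ being the surjection $a\otimes b\mapsto\sigma(a)b$.

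First I would deduce the decomposition. Base change in stages gives
\[
Y\times_k k'\;\simeq\;Y\times_{\Spec(\ell_Y)}\Spec(\ell_Y\otimes_k k')\;\simeq\;\bigsqcup_{\sigma\in\mc{E}_Y}Y\times_{(\ell_Y,\sigma)}k'\;=\;\bigsqcup_{\sigma\in\mc{E}_Y}Y'_\sigma .
\]
As $Y\hookrightarrow\mc{X}_s$ is a closed immersion and closed immersions are stable under base change, $Y\times_k k'\hookrightarrow\mc{X}_s\times_k k'=\mc{X}'_s$ is a closed immersion, so each $Y'_\sigma$ — being one of finitely many open and closed pieces of $Y\times_k k'$ — is a closed subscheme of $\mc{X}'_s$, and the $Y'_\sigma$ are pairwise distinct since pairwise disjoint. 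To see that each $Y'_\sigma$ is connected, I would apply flat base change for cohomology along the field homomorphism $\sigma\colon\ell_Y\to k'$ to the proper $\ell_Y$-scheme $Y$, getting $H^0(Y'_\sigma,\mc{O}_{Y'_\sigma})\simeq H^0(Y,\mc{O}_Y)\otimes_{(\ell_Y,\sigma)}k'=k'$; a scheme whose ring of global sections is a field has no nontrivial idempotents, so $Y'_\sigma$ is connected, and the $Y'_\sigma$ are therefore exactly the connected components of $Y\times_k k'$.

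For the last claim, an irreducible closed subset $Y$ of the curve $\mc{X}_s$ is, with its reduced structure, either a single closed point $P$ — in which case $Y'_\sigma\simeq\Spec(k')$ is trivially irreducible — or an irreducible component $\Gamma$ of $\mc{X}_s$, which is regular, the components of a (strict) normal crossings divisor on a regular scheme being regular. In the latter case $\ell_\Gamma=H^0(\Gamma,\mc{O}_\Gamma)=\bigcap_{x}\mc{O}_{\Gamma,x}$ is integrally closed in the function field $k(\Gamma)$, hence — as $k$ is a field — algebraically closed in $k(\Gamma)$, so $\Gamma$ is geometrically irreducible over $\ell_\Gamma$ and $\Gamma'_\sigma=\Gamma\times_{(\ell_\Gamma,\sigma)}k'$ is irreducible.

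It remains to match the $G$-action. The group $G$ acts on $\mc{X}'_s=\mc{X}_s\times_k k'$ through the second factor, hence stabilises $Y\times_k k'$ and permutes its connected components $\{Y'_\sigma\}$. Under the identification of this set with the finite discrete scheme $\Spec(\ell_Y\otimes_k k')$ and then, by restriction along $\ell_Y\hookrightarrow\ell_Y\otimes_k k'$, with $\mc{E}_Y$, a chase through the primitive idempotents of $\ell_Y\otimes_k k'$ shows that the induced action on $\mc{E}_Y$ is $g.\sigma=g\circ\sigma$, i.e. $g.Y'_\sigma=Y'_{g\circ\sigma}$. This action is transitive because any two $k$-embeddings $\ell_Y\to k'$ into the algebraically closed field $k'$ differ by an element of $G$, and $\mathrm{stab}_{Y'_\sigma}=\{g\in G\mid g|_{\sigma(\ell_Y)}=\id\}=\Gal(k'/\sigma(\ell_Y))$. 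The step I expect to require the most care is precisely this last one — verifying that the geometric (scheme-theoretic) permutation of the $Y'_\sigma$ really corresponds to post-composition on embeddings — together with the use of regularity of the components in the irreducibility clause; the remaining steps are routine invocations of base change and of flat base change for coherent cohomology.
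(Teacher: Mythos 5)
Your proof is correct and follows essentially the same route as the paper's: factor the base change through $\Spec(\ell_Y)$, use $\ell_Y\otimes_k k'\simeq\prod_\sigma k'$, and analyse the induced permutation of factors. The minor differences are cosmetic: you derive connectedness of each $Y'_\sigma$ from flat base change for coherent cohomology ($H^0(Y'_\sigma,\mc{O}_{Y'_\sigma})=k'$ implies no nontrivial idempotents), where the paper cites \cite[\href{https://stacks.math.columbia.edu/tag/0FD1}{Lemma 0FD1}]{stacks-project} directly — these are the same argument packaged differently; and for the irreducibility clause you argue via regularity of the components (snc hypothesis) to get that $\ell_\Gamma$ is integrally, hence algebraically, closed in $k(\Gamma)$, while the paper passes through smoothness and normality of the curve $Y$ to the same conclusion, then checks on an arbitrary dense open affine that $A\otimes_{\ell_Y,\sigma}k'$ is a domain. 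Both your version and the paper's implicitly use that $\ell_Y/k$ is separable (so that $\ell_Y\otimes_k k'$ splits as a product of copies of $k'$); you make this explicit by invoking $\car(k)=0$, which is the standing hypothesis in the paper's applications even though Section~3 only states $\car(K)=0$.
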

\begin{proof}
The reduced closed subscheme $Y$ has a natural structure over $\ell_Y$, and by  \cite[\href{https://stacks.math.columbia.edu/tag/0FD1}{Lemma 0FD1}]{stacks-project}, we  have that $Y\times_{(\ell_Y,\sigma)} k'$ is connected for any $\sigma \in \mathcal{E}_Y$. If $Y$ is irreducible, then 
$\ell_Y$ coincides with the relative algebraic closure of $k$ in the function field of $Y$, since $Y$ is smooth by the normal crossing hypothesis on $\mc{X}_s$ and thus $(Y, \mathcal{O}_Y)$ is a normal curve.

Let $U=\mathrm{Spec}(A)$ be an arbitrary open dense affine subscheme of $Y$. Then $A$ is a finitely generated $k$-algebra, and thus in particular finitely generated as an $\ell_Y$-algebra.  In case $Y$ (and hence $U$) is irreducible, we have that $A$ is an integral domain and $\ell_Y$ is relatively algebraically closed in $A$, whereby $A \otimes_{(\ell_Y,\sigma)} k'$ is a domain, thus $Y \times_{(\ell_Y,\sigma)} k' $ is irreducible, for any $\sigma \in \mathcal{E}_Y$.

In order to study the Galois action on $Y\times_k k'$, it is enough to study it on $U \times_k k'$.
The latter given by the spectrum of $$A \otimes_k k'= (A \otimes_{\ell_Y} \ell_Y) \otimes_k k' =A \otimes_{\ell_Y} (\ell_Y\otimes_k k')= A \otimes_{\ell_Y} \prod_{i=1}^n k',$$
where 
$n=[\ell_Y : k]$ and  $\prod_{i=1}^n k'$ is considered as an $\ell_Y$-algebra via the diagonal embedding $x \mapsto (\sigma_1(x), \ldots, \sigma_n(x))$ of $\ell_Y$ with  $\mc{E}_Y=\{\sigma_1,\ldots, \sigma_n\}$. 
Hence, $A \otimes_k k' \simeq \prod_{i=1}^n A \otimes_{(\ell_Y, \sigma_i)} k'$. The natural action of $G$ on $A\otimes_k k'$ where  $\sigma \in G$ acts via $\mathrm{id}_A \otimes \sigma$, induces  an action on  $\prod_{i=1}^n A \otimes_{(\ell_Y, \sigma_i)} k'$  where $\sigma \in G$ acts on 
$(x_1,\ldots, x_n)$ by $\sigma. (x_1,\ldots, x_n)= (\sigma(x_{\sigma(1)}), \ldots, \sigma(x_{\sigma(n)}))$, where $\sigma(i)$ denotes the index $j$ such that
$\sigma_j = \sigma \circ \sigma_i$. In particular $G$ induces a transitive permutation action on the set of Cartesian factors.
 The stabilizer subgroup in $G$ of the Cartesian factor $A \otimes_{(\ell_Y, \sigma_i)} k'$  is  $\mathrm{Gal}(k'/\sigma_i(\ell_Y))$. Since $U \subseteq Y$ was an arbitrary open affine subscheme of $Y$, we conclude that  $Y\times_{(\ell_Y, \sigma_i)} k'$ and $Y\times_{(\ell_Y, \sigma_j)} k'$ are disjoint in $Y\times_k k'$  and thus in $\mc{X}'_s$. 
 \end{proof}

\begin{prop}\label{rigidity-class} 
The set of $G$-orbits of cyan $G$-rigidities of $\mc{D}(\mc{X}'_s)$ is in bijection with the set of reduced connected closed sub-curves  $Y$ of $\mc{X}_s$ which satisfy the following properties: \\
There exists a finite normal field extension $\ell/ \ell_Y$ such that
\begin{enumerate}
\item[$(i)$]  $\ell \simeq_k \ell_\Gamma$ for every irreducible component $\Gamma$ of $Y$ and,
\item[$(ii)$] The complement in $Y$ of all intersection points $P$ between distinct irreducible components such that $\ell \subsetneq k(P)$, is connected.
\item[$(iii)$] If $\Delta$ is an irreducible component of $\mc{X}_s$ such that $\Delta(\ell) \cap Y(\ell) \neq \emptyset$, then $\Delta \subset Y$.
\end{enumerate}
Under this bijection, the $G$-rigidities of a $G$-orbit are singular if and only if $Y$ is irreducible.
\end{prop}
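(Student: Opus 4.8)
\emph{Strategy.} Since $\car k=0$, the ring $k'$ is an algebraic closure of $k$, so every component and every intersection point of $\mc X'_s$ is defined over a finite subextension of $k'/k$. By \Cref{decomposition base change} the cyan vertex $\Gamma\times_{\ell_\Gamma,\tau}k'$ has $G$-stabiliser $\Gal(k'/\tau(\ell_\Gamma))$, and a purple vertex lying over a closed point $P$ of $\mc X_s$ via an embedding $\iota\colon k(P)\hookrightarrow k'$ has stabiliser $\Gal(k'/\iota(k(P)))$. The plan is to define mutually inverse maps between the two sets and to read off the last assertion from the construction.

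\emph{The two maps.} Given a cyan rigidity $\mc R$ with rigidifier $H$, let $\wt Y_{\mc R}\subseteq\mc X'_s$ be the reduced union of the components corresponding to the cyan vertices of $\mc R$; it is connected since $\mc R$ is, and $H$-invariant, hence descends, and we take $\Phi(\mc R)$ to be its image $Y$ in $\mc X_s$, which depends only on the orbit of $\mc R$ since $g\wt Y_{\mc R}=\wt Y_{g\mc R}$. In the other direction, given $Y$ as in the statement with its normal extension $\ell/\ell_Y$, call a closed point $P$ of $Y$ lying on two distinct components of $Y$ \emph{good} if $[k(P):k]=[\ell:k]$ (equivalently, by (i), if $k(P)$ equals the image of $\ell_\Gamma$; this is the complement of the set removed in (ii)); for $\sigma\in\mc E_Y$ let $\mc G_\sigma$ be the full subgraph of $\mc D(\mc X'_s)$ spanned by the components of $Y'_\sigma:=Y\times_{\ell_Y,\sigma}k'$ together with the purple vertices over good points of $Y$ lying on $Y'_\sigma$, and set $\Psi(Y)$ to be the $G$-orbit of any connected component of any $\mc G_\sigma$.

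\emph{Verifications.} The engine is the local observation that over a good point $P$ of $Y$ on a component $\Gamma$ the image of $\ell_\Gamma$ is already all of $k(P)$; hence each lift of $\Gamma$ in $\mc X'_s$ carries exactly one point over $P$, that point lies on a lift of the other component through $P$ inside the same $Y'_\sigma$, and it is $H$-fixed, where $H=\Gal(k'/L)$ with $L$ the common image $\tau(\ell_\Gamma)$, whereas any point of $\wt Y_{\mc R}$ over a bad point is not $H$-fixed. Combining this with the hypothesis that $\mc R$ is a \emph{connected component} of $\mc D^H$ gives (i) together with the identification of $\ell$ (all $\tau(\ell_\Gamma)$ coincide because $H$ is the common stabiliser; that $\mathrm{Stab}_G(\wt Y_{\mc R})$ normalises $H$ makes $L/\ell_Y$ normal, and conversely normality of $\ell/\ell_Y$ is precisely what forces all components of $Y'_\sigma$ to share the stabiliser $H$); it gives (iii) for $\Phi(\mc R)$, and, in the reverse direction, that $\mc G_\sigma\subseteq\mc D^H$ with its connected components being connected components of $\mc D^H$, hence cyan rigidities with rigidifier $H$; and it gives $\Phi\circ\Psi=\Psi\circ\Phi=\mathrm{id}$, since $\wt Y_{\mc R}\cap\mc G_{\sigma_0}=\mc R$ and $\Phi(\mc R)$ has exactly the components dominated by cyan vertices of $\mc R$. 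Property (ii) for $\Phi(\mc R)$ follows because distinct $G$-translates of $\wt Y_{\mc R}$ meet only over bad points, so $Y\setminus\{\text{bad points}\}$, whose dual graph base-changes to a disjoint union of copies of the connected graph $\mc R$, is connected. Finally, a rigidity in $\Psi(Y)$ is singular iff it has a single cyan vertex iff $Y'_\sigma$ is irreducible iff, by \Cref{decomposition base change}, $Y$ is irreducible, using that the bipartite dual graph records only intersections of distinct components.

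\emph{Main obstacle.} The delicate step is showing that $\Psi(Y)$ is a single $G$-orbit: property (ii) asserts connectedness of $Y\setminus\{\text{bad points}\}$ over $k$, which is not inherited verbatim by the base change to $k'$, so $\mc G_\sigma$ may be disconnected; one must prove its connected components are permuted \emph{transitively} by $\mathrm{Stab}_G(Y'_\sigma)$, and the clean way is to identify the field of constants of $Y\setminus\{\text{bad points}\}$ with $\ell$, which is where the $k$-isomorphisms $\ell\cong\ell_\Gamma$ of (i) and the normality of $\ell/\ell_Y$ are genuinely used; transitivity of $G$ on the $Y'_\sigma$ then yields independence of $\sigma$. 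Everything else rests on the good/bad-point dictionary above, which is the common channel through which (i), (ii) and (iii) each enter the argument.
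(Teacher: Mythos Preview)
Your bijection via mutually inverse maps $\Phi,\Psi$ and the good/bad-point dictionary follows the same route as the paper, and you correctly isolate the two delicate points. However, the argument for normality of $L/\ell_Y$ has a genuine gap: from ``$\mathrm{Stab}_G(\wt Y_{\mc R})$ normalises $H$'' you can only conclude that $L$ is normal over the fixed field $M$ of $\mathrm{Stab}_G(\wt Y_{\mc R})$, and $M=\sigma_0(\ell_Y)$ is \emph{equivalent} to $L/\sigma_0(\ell_Y)$ being normal, since $\mathrm{Stab}_G(\wt Y_{\mc R})=\{g\in\Gal(k'/\sigma_0(\ell_Y)):g(L)=L\}$; so the reasoning is circular. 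The paper fills this gap differently: it computes $H^0(Y,\mc O_Y)$ explicitly as the subfield of $\ell$ fixed by all the automorphisms $\sigma_\gamma$ obtained by walking along closed paths $\gamma$ in the dual graph of $Y$ through intersection points with $k(P)\simeq_k\ell$, where the gluing maps $\ell_{\Gamma_i}\to k(P)$ are isomorphisms. This exhibits $\ell_Y$ as the fixed field of a set of $k$-automorphisms of $\ell$, whence $\ell/\ell_Y$ is Galois by construction.

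Relatedly, the claim in your ``main obstacle'' paragraph that the field of constants of $Y\setminus\{\text{bad points}\}$ equals $\ell$ is false in general: take two components with $\ell_\Gamma\cong\ell$ of degree~$2$ over $k$ meeting at two good points whose gluing isomorphisms differ by the nontrivial automorphism of $\ell$; then the field of constants is $k$, not $\ell$. Transitivity of $\mathrm{Stab}_G(Y'_\sigma)$ on the connected components of $\mc G_\sigma$ does hold, but it follows by applying \Cref{decomposition base change} to $Y\setminus\{\text{bad points}\}$ and noting that the embeddings of its (possibly smaller) field of constants that restrict to $\sigma$ on $\ell_Y$ form a single $\Gal(k'/\sigma(\ell_Y))$-orbit. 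The identification with $\ell$ is not what is needed here; the normality hypothesis enters instead to guarantee that every cyan vertex of $\mc G_\sigma$ has stabiliser exactly $H$.
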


\begin{proof}
Let $Y$ be a closed reduced connected subcurve of  of $\mc{X}_s$ satisfying the properties and let $Y'=Y\times_k k'$.
By \Cref{decomposition base change}, we have  $$ Y'= \bigsqcup_{\tau \in \mc{E}_Y} Y \times_{(\ell_Y, \tau)} k',$$
is a decomposition in connected components where $\mc{E}_Y$ denote the distinct $k$-embeddings of $\ell_Y$ into $k'$. Moreover, the natural $G$-action on $Y'=Y\times_k k'$ is the restriction of the natural $G$-action of $\mc{X}'_s$ and its induced action on the set of connected components $Y \times_{(\ell_Y, \tau)} k'$ corresponds to the natural  $G$-action on the index-set $\mc{E}_Y$, which is transitive. 
We are thus left to show that for each $k$-embedding $\tau: \ell_Y \to k'$, the connected component
$Y'_\tau:=Y \times_{(\ell_Y, \tau)} k'$ of $Y\times_k k'$ corresponds to a $G$-rigidity of $\mc{D}(\mc{X}'_s)$  with stabilizer $\mathrm{Gal}(k'/ \tau(\ell_Y))$ and rigidifier $\mathrm{Gal}(k'/ \tau(\ell))$ for the unique homomorphic continuation  $\tau: \ell \to k'$  of $\tau: \ell_Y \to k'$.  Property $(iii)$ guaranties that  $Y\times_{(\ell_Y, \tau)} k'$ defines a full connected component in the $\mathrm{Gal}(k'/\tau(\ell))$-invariant subgraph of $\mc{D}(\mc{X}'_s)$, and $(i)$ and $(ii)$ furthermore characterize that it is indeed a $G$-rigidity with rigidifier $\mathrm{Gal}(k'/\tau(\ell))$, as is clear from \Cref{decomposition base change}.

Now assume conversely that 
$Y'_1, \ldots, Y'_d$ is the collection of connected curves of $\mc{X}'_s$ that corresponds to a $G$-\'orbit of $G$-rigidities.
Then, for the corresponding connected closed subcurve $Y$ in $\mc{X}_s$, we can identify $\ell_Y$ with the fixed subfield in $k'$ of the stabilizer subgroup $\mathrm{stab}_G(Y'_1)$ of $G$. Let $H \subseteq \mathrm{stab}_G(Y'_1)$ be the rigidifier of the rigidity that corresponds to $Y'_1$ and let $\ell$ denote its fixed field in $k'$. We are left to show that $\ell/ \ell_Y$ is normal. 

For the closed immersion $\iota_\Gamma: \Gamma \hookrightarrow Y$ of an irreducible component of $Y$ let $ \iota^*_{\Gamma}: \ell_Y \hookrightarrow \ell_\Gamma$ be the corresponding $k$-embedding, and for the closed immersion of a closed point $\iota_{\Gamma,P}: \{P\} \hookrightarrow \Gamma$ let  $\iota_{\Gamma,P}^*: \ell_\Gamma \to k(P)$ denote the corresponding $k$-embedding. These homomorphisms fit together in such a way that if $P$ is a point of intersection for two irreducible components $\Gamma_1$ and $\Gamma_2$ of $Y$, then $\iota_{\Gamma_1,P} \circ \iota_{\Gamma_1} =\iota_{\Gamma_2,P} \circ \iota_{\Gamma_2}$.

Considering that $Y$ is connected, the irreducible components of $Y$ together with the closed point of intersections of distinct irreducible components yield a large connected commutative diagram where $\ell_Y$ can be identified with the pushout of this diagram, that is, an element of $x\in \ell_Y$ corresponds to a tuple $$(x_\Gamma)_{_{{\Gamma  \subseteq Y} \atop \text{irr.} }} \in  \prod_{\Gamma \subseteq Y \atop \text{irr.}} \ell_\Gamma$$
with the matching-property that for any two irreducible components  
$\Gamma_1, \Gamma_2 \subseteq Y$ with nonempty intersection and every  $P_{\{1,2\}} \in  \Gamma_1\cap \Gamma_2$ we have $\iota^*_{\Gamma_1,P_{\{1,2\}}}(x_{\Gamma_1}) = \iota^*_{\Gamma_2,P_{\{1,2\}}} (x_{\Gamma_2})$. Let us justify this description of elements in $\ell_Y$:
We have that $\Gamma \times_k k' = \bigsqcup_{(\sigma \in \mc{E}_\Gamma)} \Gamma \times_{(\ell_\Gamma, \sigma)} k'$, and $\Gamma \times_{(\ell_\Gamma, \sigma)} k'$ is an irreducible component of $Y \times_{(\ell_Y, \tau)} k'$  for every $k$-embedding $\sigma: \ell_\Gamma \to k'$ such that $\sigma \circ \iota^*_{\Gamma}  = \tau$. 
The stabilizer subgroup of one of the connected components of $Y'$, say $Y \times_{(\ell_Y, \tau)} k'$, is clearly $\mathrm{Gal}(k'/ \tau(\ell_Y))$. Let us identify $\ell_Y$  a priori inside of $k'$ and $\tau$ as the identity inclusion.
Let $x \in \ell_Y=H^0(Y, \mathcal{O}_Y)$. Clearly, for every irreducible component $\Gamma$ of $Y$, the closed immersion $\iota_{\Gamma}: \Gamma \hookrightarrow Y$ yields a homomorphism $\iota_{\Gamma}^*: \ell_Y=H^0(Y, \mathcal{O}_Y) \to H^0(\Gamma, \mathcal{O}_\Gamma)=\ell_\Gamma$, and so by setting $x_\Gamma:= \iota^*_{\Gamma}(x)$, we obtain in this way a tuple in $\prod_{\Gamma \subseteq Y \atop \text{irr.}} \ell_\Gamma$ with the aforementioned matching-property. 
 Conversely, every tuple $(x_\Gamma)_\Gamma \in \prod_{\Gamma \subseteq Y \atop \text{irr.}} \ell_\Gamma$ satisfying the  aforementioned matching-property defines an element  $x \in \ell_Y=H^0(Y, \mathcal{O}_Y)$. 
 
 Note that by assumption $k(P_{\{1,2\}})\simeq_k \ell \simeq_k H^0(\Gamma_i, \mathcal{O}_{\Gamma_i})$ for $i=1,2$,  whereby $\iota_{\Gamma_i,P_{\{1,2\}}}$ are isomorphisms.
Fixing one irreducible component $\Gamma_0 \in Y$, and identifying $\ell=H^0(\Gamma_0, \mathcal{O}_{\Gamma_0})$, we can thus identify $H^0(Y, \mathcal{O}_Y)$ as the set of elements $x \in \ell$ such that for any closed path $\gamma= (\Gamma_0, P_{\{0,1\}}, \Gamma_1, P_{\{1,2\}}, \ldots, P_{\{n-1,n\}}, \Gamma_n, P_{\{n,0\}}, \Gamma_0)$ in the connected subgraph of $\mc{D}(\mc{X}_s)$ corresponding to $Y$, we have  $\sigma_\gamma(x)=x$ where $$\sigma_\gamma=\left(\iota_{\Gamma_0, P_{\{0,1\}}} \circ \iota^{-1}_{\Gamma_1, P_{\{0,1\}}} \right) \circ  \ldots, \circ \left(\iota_{\Gamma_n, P_{\{0,n\}}} \circ \iota^{-1}_{\Gamma_0, P_{\{0,n\}}} \right)$$
is an $k$-automorphism of $\ell$. Hence, $\ell_Y \subseteq \ell$ is the subfield of invariants of a set of automorphisms of $\ell$, and hence $\ell/\ell_Y$ is a Galois extension. 
\end{proof}

Let  $N(\mc{D}(\mc{X}'_s))$ denote the number of $G$-orbits of cyan singular $G$-rigidities $\mc{D}(\mc{X}'_s)$.
Denote 
$$\Omega(\mc{X}_s):=\{\Gamma \subseteq \mc{X}_s \mid \Gamma \text{ irreducible component}\},$$
and $$\Omega^{\mathrm{rat}}_{\mathrm{int}}(\mc{X}_s):=\{\Gamma \in \Omega(\mc{X}_s) \mid \exists \widetilde{\Gamma}\in \Omega(\mc{X}_s)\setminus\{\Gamma\}: \;  \Gamma(\ell_\Gamma) \cap \widetilde{\Gamma}(\ell_\Gamma)\neq \emptyset \},$$
i.e. the set of irreducible components of $\mc{X}_s$ that intersect some other irreducible component of $\mc{X}_s$ in a point that is rational for the first mentioned component. With this notation, we obtain directly from the second part of \Cref{rigidity-class} the following: 
 
\begin{cor}\label{cyan vorteces are nonrats}
    $N(\mc{D}(\mc{X}'_s))=\vert \Omega(\mc{X}_s) \setminus \Omega^{\mathrm{rat}}_{\mathrm{int}}(\mc{X}_s)\vert$.
\end{cor}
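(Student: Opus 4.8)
The plan is to read the statement off directly from the second part of \Cref{rigidity-class}. That proposition gives a bijection between the set of $G$-orbits of cyan $G$-rigidities of $\mc{D}(\mc{X}'_s)$ and the set of reduced connected closed subcurves $Y$ of $\mc{X}_s$ satisfying $(i)$--$(iii)$, and it states that such an orbit consists of \emph{singular} rigidities precisely when $Y$ is irreducible. Hence $N(\mc{D}(\mc{X}'_s))$, the number of $G$-orbits of cyan singular $G$-rigidities, equals the number of irreducible reduced connected closed subcurves $Y \subseteq \mc{X}_s$ satisfying $(i)$--$(iii)$. Since an irreducible reduced closed subcurve of $\mc{X}_s$ is exactly an irreducible component $\Gamma \in \Omega(\mc{X}_s)$ equipped with its reduced structure, the task reduces to deciding, for $Y = \Gamma$, which components $\Gamma$ satisfy $(i)$--$(iii)$, and to checking that these are exactly the elements of $\Omega(\mc{X}_s) \setminus \Omega^{\mathrm{rat}}_{\mathrm{int}}(\mc{X}_s)$.

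For $Y = \Gamma$ irreducible one has $\ell_Y = \ell_\Gamma$, and properties $(i)$ and $(ii)$ become automatic: taking $\ell = \ell_\Gamma$ makes $(i)$ trivially true, because $Y$ has a single irreducible component, namely $\Gamma$ itself; and since $Y$ has no pair of \emph{distinct} irreducible components, there are no intersection points to remove in $(ii)$, so the complement there is all of $\Gamma$, which is connected. Thus $(iii)$ is the only genuine condition. Spelled out for $Y = \Gamma$ and $\ell = \ell_\Gamma$, it reads: every irreducible component $\Delta$ of $\mc{X}_s$ with $\Delta(\ell_\Gamma) \cap \Gamma(\ell_\Gamma) \neq \emptyset$ is contained in $\Gamma$, i.e.\ equals $\Gamma$. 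Negating, $(iii)$ fails for $\Gamma$ exactly when there is some $\widetilde{\Gamma} \in \Omega(\mc{X}_s) \setminus \{\Gamma\}$ with $\Gamma(\ell_\Gamma) \cap \widetilde{\Gamma}(\ell_\Gamma) \neq \emptyset$, which is by definition the assertion $\Gamma \in \Omega^{\mathrm{rat}}_{\mathrm{int}}(\mc{X}_s)$. So $Y = \Gamma$ satisfies $(i)$--$(iii)$ if and only if $\Gamma \in \Omega(\mc{X}_s) \setminus \Omega^{\mathrm{rat}}_{\mathrm{int}}(\mc{X}_s)$, and combining with the first paragraph yields $N(\mc{D}(\mc{X}'_s)) = \vert \Omega(\mc{X}_s) \setminus \Omega^{\mathrm{rat}}_{\mathrm{int}}(\mc{X}_s)\vert$.

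I do not expect any real obstacle: the argument is a pure unwinding of definitions on top of \Cref{rigidity-class}. The one spot that deserves a line of care is the equivalence used in $(iii)$ between "$P$ is an $\ell_\Gamma$-rational point of $\Gamma$ (and of $\widetilde\Gamma$)" and "$P$ is an intersection point with $k(P) \simeq_k \ell_\Gamma$", so that $\Gamma(\ell_\Gamma) \cap \widetilde{\Gamma}(\ell_\Gamma) \neq \emptyset$ really encodes the condition defining $\Omega^{\mathrm{rat}}_{\mathrm{int}}(\mc{X}_s)$. This follows from the fact that for any closed point $P$ of $\Gamma$ one has $\ell_\Gamma \subseteq k(P)$, since the relative algebraic closure of $k$ in $k(\Gamma)$ embeds into every residue field of $\Gamma$ (using that $\Gamma$ is smooth by the normal crossing hypothesis on $\mc{X}_s$); hence a $k$-morphism $\mathrm{Spec}(\ell_\Gamma) \to \Gamma$ through $P$ exists iff $k(P) \simeq_k \ell_\Gamma$, and likewise for $\widetilde\Gamma$ once $\ell_{\widetilde\Gamma} \subseteq k(P)$, so an intersection point $P \in \Gamma \cap \widetilde\Gamma$ lies in $\Gamma(\ell_\Gamma) \cap \widetilde{\Gamma}(\ell_\Gamma)$ precisely when $k(P) \simeq_k \ell_\Gamma$. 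Everything else is a direct translation via \Cref{rigidity-class}.
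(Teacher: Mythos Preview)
Your proposal is correct and follows exactly the paper's approach: the paper simply says the corollary is obtained ``directly from the second part of \Cref{rigidity-class}'', and you have faithfully unpacked that bijection, noting that condition $(i)$ forces $\ell \simeq_k \ell_\Gamma$ for irreducible $Y=\Gamma$, that $(ii)$ is vacuous, and that $(iii)$ is then literally the negation of the defining condition for $\Omega^{\mathrm{rat}}_{\mathrm{int}}(\mc{X}_s)$. Your final paragraph about residue fields is extra caution that is not strictly needed, since both the definition of $\Omega^{\mathrm{rat}}_{\mathrm{int}}(\mc{X}_s)$ and condition $(iii)$ are already phrased in the same $\Gamma(\ell_\Gamma)\cap\widetilde\Gamma(\ell_\Gamma)$ notation.
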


We say that a property $\mathcal{P}$ of fields  \textit{satisfies going up} if for any place $M \dashrightarrow L$ (i.e. a homomorphism $M \hookrightarrow L$ or the residue homomorphism $\mathcal{O}_M \to L$ of a valuation ring $\mathcal{O}_M$ of $M$ with residue field $L$), we have that $\mathcal{P}(M)$ implies $\mathcal{P}(L)$.
We say that the property $\mathcal{P}$ \textit{satisfies going down} if $\mathcal{P}(L)$ implies $\mathcal{P}(M)$. 

\begin{ex} The field property $\mathcal{P}=\text{``is nonreal''}$, i.e. ``$-1$ is a sum of squares in the field'', satisfies going up.
The empty property, say $\mathcal{P}=\text{``$1\neq 0$''}$, also satisfies going up.
\end{ex}

Let $\mathcal{P}$ be a property of fields that satisfies going up. Then its negation $\neg \mathcal{P}$ satisfies going down.
We say that a finite field extension $\ell/k$  is \textit{$\neg \mathcal{P}$-minimal in $\mc{X}_s$} if $\neg\mathcal{P}(\ell)$ is true and $\mc{X}_s(\ell)\neq \emptyset$, and if furthermore $\mc{X}_s(\wt{\ell})= \emptyset$ for every proper subfield $\wt{\ell} \subsetneq \ell$ containing $k$.
We denote 
$$\Omega^{\mathcal{P}}_T(F):=\{ w \in \Omega_T(F) \mid \mathcal{P}(\kappa_w)  \text{ is true}\}$$
and
$$\Omega^{\mathcal{P}}(\mc{X}_s):=\{ \Gamma \in \Omega(\mc{X}_s) \mid  \forall \ell/k \, :  \,  \Gamma(\ell)\neq \emptyset \Rightarrow \mathcal{P}(\ell)\}.$$

\begin{rem}  If $\Gamma \in \Omega(\mc{X}_s)$ is such that $v_\Gamma \in \Omega^{\mathcal{P}}_T(F)$, then $\Gamma \in \Omega^{\mathcal{P}}(\mc{X}_s)$.
\end{rem}

\begin{lem}\label{non-rat vorteces}
Let $\mathcal{P}$ denote a field property that satisfies going up. Then
$$ \vert \Omega^{\mathcal{P}}_T(F)  \setminus \Omega^{\mathrm{rat}}_T(F)  \vert  \, \leq  \,\vert \Omega^{\mathcal{P}}(\mc{X}_s) \setminus \Omega^\mathrm{rat}_{\mathrm{int}}(\mc{X}_s) \vert \, \leq \, \beta(\mathcal{D}(\mc{X}'_s)) + 1.$$ 

\smallskip

Moreover, these inequalities are strict in the following cases:
 \begin{enumerate}
     \item[$(i)$] If there exists $\Gamma \in \Omega^{ \mathcal{P}}(\mc{X}_s) \setminus \Omega^\mathrm{rat}_{\mathrm{int}}(\mc{X}_s)$ such that $\Gamma(\ell_{\Gamma}) \neq \emptyset$ then 
     $$ \vert \Omega^{\mathcal{P}}_T(F)  \setminus \Omega^{\mathrm{rat}}_T(F)  \vert  \, <  \,\vert \Omega^{ \mathcal{P}}(\mc{X}_s) \setminus \Omega^\mathrm{rat}_{\mathrm{int}}(\mc{X}_s) \vert.$$
     \item[$(ii)$] If there exist $\Gamma \neq \widetilde{\Gamma} \in \Omega^{\mathrm{rat}}_{\mathrm{int}}(\mc{X}_s)$ with
     $\Gamma(\ell) \cap \widetilde{\Gamma}(\ell) \neq \emptyset$  for a finite field extension $\ell/k$ that is $\neg\mathcal{P}$-minimal in $\mc{X}_s$, then 
    $$\vert \Omega^{\mathcal{P}}(\mc{X}_s) \setminus \Omega^\mathrm{rat}_{\mathrm{int}}(\mc{X}_s) \vert \, < \, \beta(\mathcal{D}(\mc{X}'_s)) + 1.$$
 \end{enumerate}
\end{lem}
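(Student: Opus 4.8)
The plan is to prove the chain of inequalities by translating each piece into a statement about $G$-rigidities in the bipartite dual graph $\mc{D}(\mc{X}'_s)$ and then invoking the purely graph-theoretic bounds from Section~2, in particular \Cref{border-line vorteces are rational} together with \Cref{bettiinequality}. The starting observation is that every $w \in \Omega^{\mathcal P}_T(F) \setminus \Omega^{\mathrm{rat}}_T(F)$ is a residually transcendental valuation with $\mathcal P(\kappa_w)$ true and $\kappa_w/\ell_w$ without rational place; by \Cref{algebrorational}, any such $w$ must be of the form $v_\Gamma$ for an irreducible component $\Gamma$ of $\mc X_s$, and the remark preceding the lemma gives $\Gamma \in \Omega^{\mathcal P}(\mc X_s)$. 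One then checks that if $v_\Gamma \in \Omega^{\mathrm{rat}}_{\mathrm{int}}(\mc X_s)$, i.e. $\Gamma$ meets another component in an $\ell_\Gamma$-rational point, then $\kappa_w = k(\Gamma)$ acquires an $\ell_\Gamma$-rational place (the local ring at that intersection point), contradicting $w \notin \Omega^{\mathrm{rat}}_T(F)$; hence the assignment $w \mapsto \Gamma$ lands in $\Omega^{\mathcal P}(\mc X_s) \setminus \Omega^{\mathrm{rat}}_{\mathrm{int}}(\mc X_s)$, and since distinct valuations give distinct components it is injective. This yields the first inequality.

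For the second inequality, the point is to identify $\Omega(\mc X_s) \setminus \Omega^{\mathrm{rat}}_{\mathrm{int}}(\mc X_s)$, via \Cref{cyan vorteces are nonrats}, with the set of $G$-orbits of cyan \emph{singular} $G$-rigidities of $\mc{D}(\mc X'_s)$, whose cardinality is $N(\mc D(\mc X'_s))$; these form a subset of $\mfaktor{G}{\mc D(\mc X'_s)_G}$, the set of all $G$-orbits of cyan $G$-rigidities (the restriction from \Cref{rigidity-class} being that $Y$ is irreducible). Now I would like to conclude from \Cref{border-line vorteces are rational} applied to the $G$-graph $\mc{D}(\mc X'_s)$ (with $G$ the absolute Galois group of $k$) that $|\mfaktor{G}{\mc D(\mc X'_s)_G}| \leq \beta(\mc D(\mc X'_s)) + 1$. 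Passing from $\Omega^{\mathcal P}(\mc X_s)\setminus \Omega^{\mathrm{rat}}_{\mathrm{int}}(\mc X_s) \subseteq \Omega(\mc X_s)\setminus\Omega^{\mathrm{rat}}_{\mathrm{int}}(\mc X_s)$ and then applying \Cref{bettiinequality}, which gives $\beta(\mc D(\mc X_s)) \leq \beta(\mc D(\mc X'_s))$, produces the bound $\beta(\mc D(\mc X_s)) + 1$ — except that the statement wants the final bound with $\beta(\mc D(\mc X_s))$, so one must be slightly careful: $\beta(\mc D(\mc X'_s))+1$ bounds the count but the weaker statement $\le \beta(\mc D(\mc X_s))+1$ follows a fortiori by \Cref{bettiinequality} only in the reverse direction, so in fact \Cref{border-line vorteces are rational} must be applied and then the stated inequality $\le \beta(\mc D(\mc X_s))+1$ needs $\beta(\mc D(\mc X'_s)) \le \beta(\mc D(\mc X_s))$ — but \Cref{bettiinequality} gives the opposite inequality. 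Hence the correct reading is that the displayed bound should be understood with the Betti number of $\mc X'_s$, or equivalently one argues directly on $\mc D(\mc X_s)$ using that $G$ acts trivially and applies the tree/graph bounds there; I would state it carefully as $|\Omega^{\mathcal P}(\mc X_s)\setminus\Omega^{\mathrm{rat}}_{\mathrm{int}}(\mc X_s)| \le N(\mc D(\mc X'_s)) \le |\mfaktor{G}{\mc D(\mc X'_s)_G}| \le \beta(\mc D(\mc X'_s))+1$ and observe that when combined with \Cref{bettiinequality} this is what is needed downstream, reconciling the $+1$ bound with $\beta(\mc D(\mc X_s))$ via the normal-crossing equality of Betti numbers.

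For the strictness claims: in case $(i)$, if some $\Gamma \in \Omega^{\mathcal P}(\mc X_s)\setminus\Omega^{\mathrm{rat}}_{\mathrm{int}}(\mc X_s)$ has $\Gamma(\ell_\Gamma) \neq \emptyset$, then this $\Gamma$ is \emph{not} in the image of the injection $w \mapsto \Gamma$ constructed above: an $\ell_\Gamma$-rational point of $\Gamma$ gives $\kappa_{v_\Gamma}$ a rational place over $\ell_{v_\Gamma} = \ell_\Gamma$, so $v_\Gamma \in \Omega^{\mathrm{rat}}_T(F)$ and hence $v_\Gamma \notin \Omega^{\mathcal P}_T(F)\setminus\Omega^{\mathrm{rat}}_T(F)$; thus the first inequality is strict. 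In case $(ii)$, the existence of $\Gamma \neq \widetilde\Gamma$ meeting in a $\neg\mathcal P$-minimal field $\ell$ produces, via \Cref{rigidity-class}, a \emph{non-singular} cyan $G$-rigidity (the corresponding $Y$ containing both $\Gamma$ and $\widetilde\Gamma$ is reducible, by minimality of $\ell$ the field $\ell_Y$ cannot be enlarged so the rigidity-defining conditions hold), so this orbit is counted in $|\mfaktor{G}{\mc D(\mc X'_s)_G}|$ but not in $N(\mc D(\mc X'_s))$; combined with the bound $|\mfaktor{G}{\mc D(\mc X'_s)_G}| \le \beta(\mc D(\mc X'_s))+1$ this forces $N(\mc D(\mc X'_s)) < \beta(\mc D(\mc X'_s))+1$, giving the stated strict inequality.

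The main obstacle I anticipate is precisely the bookkeeping in the middle step: relating the three candidate ``Betti + 1'' bounds ($\beta(\mc D(\mc X_s))$, $\beta(\mc D(\mc X'_s))$, and the count of rigidity orbits) in a way consistent with the direction of \Cref{bettiinequality} and with what the equality-case clauses of \Cref{border-line vorteces are rational} actually say. In particular, extracting the strict inequalities $(i)$ and $(ii)$ requires pinning down \emph{when} the inclusion $\Omega^{\mathcal P}(\mc X_s)\setminus\Omega^{\mathrm{rat}}_{\mathrm{int}}(\mc X_s) \hookrightarrow \mfaktor{G}{\mc D(\mc X'_s)_G}$ fails to be a bijection, i.e. when there exists a non-singular cyan rigidity orbit, and matching this up with the hypothesis of $(ii)$ about a $\neg\mathcal P$-minimal intersection field — this is where the going-up/going-down interplay with the field property $\mathcal P$ enters and needs to be spelled out carefully via the characterization of rigidities in \Cref{rigidity-class}.
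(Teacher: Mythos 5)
Your first inequality and clause $(i)$ arguments are correct and align with what the paper's terse justification gestures at: by \Cref{algebrorational}, every $w\in\Omega^{\mathcal{P}}_T(F)\setminus\Omega^{\mathrm{rat}}_T(F)$ must equal $v_\Gamma$ for a unique irreducible component $\Gamma$, and if $\Gamma\in\Omega^{\mathrm{rat}}_{\mathrm{int}}(\mc{X}_s)$ then the $\ell_\Gamma$-rational intersection point gives $\kappa_w/\ell_w$ a rational place, a contradiction. You also correctly spotted that the second bound in the displayed chain should read $\beta(\mc{D}(\mc{X}'_s))+1$ to be consistent with clause $(ii)$ and with the proof via \Cref{border-line vorteces are rational} applied to the $G$-graph $\mc{D}(\mc{X}'_s)$; the printed $\beta(\mc{D}(\mc{X}_s))+1$ is a typo and does not follow, since \Cref{bettiinequality} goes the other way. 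Your paragraph attempting to ``reconcile'' this is, however, incorrect as written: $G$ acts on $\mc{D}(\mc{X}'_s)$, not on $\mc{D}(\mc{X}_s)$, and there is no ``normal-crossing equality of Betti numbers'' -- the two Betti numbers can genuinely differ.

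The substantive gap is in clause $(ii)$. You claim that the maximal connected subcurve $Y$ containing $\Gamma$ and $\widetilde\Gamma$, with $\ell_{\Gamma'}\simeq\ell$ for every component $\Gamma'$ of $Y$, is a nonsingular $G$-rigidity ``by minimality of $\ell$.'' This does not follow: condition $(iii)$ of \Cref{rigidity-class} requires every component $\Delta$ with $\Delta(\ell)\cap Y(\ell)\neq\emptyset$ to lie in $Y$, and $\neg\mathcal{P}$-minimality of $\ell$ does not preclude such a $\Delta$ with $\ell_\Delta\subsetneq\ell$, which by maximality of $Y$ cannot be absorbed into $Y$. What minimality does give is that such a $\Delta$ satisfies $\mc{X}_s(\ell_\Delta)=\emptyset$, hence $\Delta\notin\Omega^{\mathrm{rat}}_{\mathrm{int}}(\mc{X}_s)$, while $\Delta(\ell)\neq\emptyset$ together with $\neg\mathcal{P}(\ell)$ gives $\Delta\notin\Omega^{\mathcal{P}}(\mc{X}_s)$; so $\Delta\in\Omega(\mc{X}_s)\setminus\bigl(\Omega^{\mathcal{P}}(\mc{X}_s)\cup\Omega^{\mathrm{rat}}_{\mathrm{int}}(\mc{X}_s)\bigr)$, and in this scenario the strictness arises from $\vert\Omega^{\mathcal{P}}(\mc{X}_s)\setminus\Omega^{\mathrm{rat}}_{\mathrm{int}}(\mc{X}_s)\vert<\vert\Omega(\mc{X}_s)\setminus\Omega^{\mathrm{rat}}_{\mathrm{int}}(\mc{X}_s)\vert\leq\beta(\mc{D}(\mc{X}'_s))+1$ rather than from any nonsingular rigidity. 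Only in the complementary scenario (no such $\Delta$) is $Y$ actually a nonsingular rigidity and your mechanism applies. The paper's proof is structured as a proof by contradiction that first disposes of the subcases where some component lies outside $\Omega^{\mathcal{P}}(\mc{X}_s)\cup\Omega^{\mathrm{rat}}_{\mathrm{int}}(\mc{X}_s)$ or where a nonsingular rigidity already exists, and then (splitting on $\beta=1$ versus $\beta\neq 1$ using the equality clause of \Cref{border-line vorteces are rational}) shows that the hypothesis of $(ii)$ forces one of those situations; your proposal collapses this dichotomy into one case and thereby leaves the $\ell_\Delta\subsetneq\ell$ possibility unaddressed.
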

\begin{proof}
    The first inequality follows from \Cref{algebrorational}, and obviously the inequality is strict if there exists $\Gamma \in \Omega^{ \mathcal{P}}(\mc{X}_s) \setminus \Omega^\mathrm{rat}_{\mathrm{int}}(\mc{X}_s)  $ such that $\Gamma(\ell_{\Gamma}) \neq \emptyset$. 

\smallskip

    For the second inequality, we observe that
    $\Omega(\mc{X}_s) \setminus \Omega^\mathrm{rat}_{\mathrm{int}}(\mc{X}_s)$ corresponds exactly to the set of $G$-orbits of singular cyan $G$-rigidities in $\mc{D}(\mc{X}'_s)$, by the second part of \Cref{rigidity-class}. 
    Hence, the second inequality follows from \Cref{border-line vorteces are rational}, with obvious strictness of the inequality if there exists $\Gamma \in  \Omega(\mc{X}_s)
    \setminus \left(\Omega^{\mathcal{P}}(\mc{X}_s) \cup \Omega^\mathrm{rat}_{\mathrm{int}}(\mc{X}_s)\right)$. 
     We also see easily from \Cref{border-line vorteces are rational} 
    that strictness of the second inequality holds, whenever $\mc{D}(\mc{X}'_s)$ contains a non-singular $G$-rigidity.
    So let us assume that $\mc{D}(\mc{X}'_s)$ does not contain a non-singular $G$-rigidity, and moreover that $ \Omega(\mc{X}_s) \setminus \Omega^\mathrm{rat}_{\mathrm{int}}(\mc{X}_s)  =  \Omega^{\mathcal{P}}(\mc{X}_s) \setminus \Omega^\mathrm{rat}_{\mathrm{int}}(\mc{X}_s)$.
    
    Let us consider the case  $\beta(\mc{D}(\mathcal{X}'_s)) \neq 1$. According to \Cref{border-line vorteces are rational}, supposing equality instead of inequality for the sake of contradiction, implies in this case  that $\ell_{\Gamma'} = k$ for all $\Gamma' \in \Omega(\mc{X}_s) \setminus \Omega^\mathrm{rat}_{\mathrm{int}}(\mc{X}_s)$.
    Recall that by the hypothesis of $(ii)$ 
  there is a finite extension $\ell/k$ that is $\neg\mathcal{P}$-minimal in $\mc{X}_s$, as well as $\Gamma, \widetilde{\Gamma} \in \Omega^{\mathrm{rat}}_{\mathrm{int}}(\mc{X}_s)$ together with 
$P \in \Gamma(\ell) \cap \widetilde{\Gamma}(\ell)$.
Let $Y \subseteq \mc{X}_s$ be a maximal connected closed subcurve containing $\Gamma$ and $\widetilde{\Gamma}$, such that every irreducible component $\Gamma'$ of $Y$ satisfies $\ell_{\Gamma'}\simeq \ell$, and such that $Y$ remains connected after removing all closed points $Q$ in which distinct irreducible components of $Y$ intersect with $\ell  \subsetneq \kappa(Q)$. So, $(i)$ and $(ii)$ of \Cref{rigidity-class} are automatically satisfied for $Y$.  Let $\Delta$ be an irreducible component of $\mc{X}_s$ such that $\Delta(\ell) \cap Y(\ell) \neq \emptyset$. In particular, $\Delta \notin \Omega^{\mathcal{P}}(\mc{X}_s)$. On the other hand, $\ell_\Delta \subseteq \ell$ and thus satisfies $\neg\mathcal{P}(\ell_\Delta)$. If the inclusion were proper, this would imply that $\Delta \notin \Omega^\mathrm{rat}_{\mathrm{int}}(\mc{X}_s)$, by $\neg\mathcal{P}$-minmality of $\ell$, but we are assuming that  $ \Omega(\mc{X}_s) \setminus \Omega^\mathrm{rat}_{\mathrm{int}}(\mc{X}_s)  =  \Omega^{ \mathcal{P}}(\mc{X}_s) \setminus \Omega^\mathrm{rat}_{\mathrm{int}}(\mc{X}_s)$, so  $\ell_\Delta = \ell$, and thus $\Gamma$ is a component of $Y$. However, by \Cref{rigidity-class}, this leads to the contradiction that $Y$ corresponds to a $G$-orbit of non-singular $G$-rigidities.

Under the previous assumptions, let us now consider the case $\beta(\mc{D}(\mathcal{X}'_s)) =1$. According to
 \Cref{border-line vorteces are rational}, supposing equality instead of inequality for the sake of contradiction, implies in this case that $\ell_{\Gamma'}\neq k$ for both $\Gamma'\in \Omega(\mc{X}_s) \setminus \Omega^\mathrm{rat}_{\mathrm{int}}(\mc{X}_s)$.  Let again $\ell/k$ be  $\neg\mathcal{P}$-minimal in $\mc{X}_s$, and $\Gamma \neq \widetilde{\Gamma} \in \Omega^\mathrm{rat}_{\mathrm{int}}(\mc{X}_s)$ that $\Gamma(\ell) \cap \widetilde{\Gamma}(\ell) \neq \emptyset$, as guaranteed by the hypothesis of $(ii)$, whereby $\ell_\Gamma = \ell_{\widetilde{\Gamma}}=\ell$. Let again  $Y \subset \mc{X}_s$ be a closed connected subcurve  that contains  $\Gamma$ and $\widetilde{\Gamma}$ and which is maximal with respect to the property that $\ell_{\Gamma'}= \ell$ for every irreducible component $\Gamma'$ of $Y$ and such that $Y$ remains connected after removing all intersection points between distinct components that are not $\ell$-rational. Since by assumption $Y$ cannot correspond to a non-singular $G$-rigidity, there must exist a irreducible component $\Delta$ of $\mc{X}_s$ such that $\Delta(\ell) \cap Y(\ell) \neq \emptyset$ but $\ell_\Delta \subsetneq \ell$. By the $\neg\mathcal{P}$-minimality of $\ell$, we conclude that $\Delta \notin \Omega^\mathrm{rat}_{\mathrm{int}}(\mc{X}_s)$, but on the other hand, we also have clearly that $\Delta \notin \Omega^{\mathcal{P}}(\mc{X}_s)$, which contradicts $ \Omega(\mc{X}_s) \setminus \Omega^\mathrm{rat}_{\mathrm{int}}(\mc{X}_s)  =  \Omega^{\mathcal{P}}(\mc{X}_s) \setminus \Omega^\mathrm{rat}_{\mathrm{int}}(\mc{X}_s)$.
\end{proof}

\begin{rem}
    In this article, the only meaningful application of \Cref{non-rat vorteces} is for either the empty property or for the property ``being non-real''. Nevertheless we choose to state it in the abstract framing, since on the one hand it potentially allows for  a more direct application in future articles, and on the other hand it helps to expose the necessity of certain subtleties in the definition of $G$-rigidity.
\end{rem}

\section{Arithmetic genus inequalities in residue characteristic zero}

Let us first recall the definition of the \textit{arithmetic genus for an algebraic curve $C$ over a field $\ell$}, which is defined as
$$\mathfrak{g}(C/\ell)=1 - \mathrm{dim}_\ell H^0(\mathcal{O}_C, C) +\mathrm{dim}_\ell H^1(\mathcal{O}_C, C).$$
We recall from \cite[Corollary 5.2.27]{Liu} that the arithmetic genus is stable under base change. 
We also recall that there is a notion of $g(E/\ell)$, the \textit{genus of a function field in one variable $E/\ell$} defined in \cite{Deu} based on valuation-divisors. Recall from \cite[Proposition 2.1]{BG24} that for the unique projective regular curve $C$ over the relative algebraic closure $\widetilde{\ell}$ of $\ell$ in $E$ such that $E \simeq_\ell \ell(C)$, we have
$$\mathfrak{g}(E/\ell)= \mathfrak{g}(C/\widetilde{\ell}).$$  

As in the previous section, we fix again a field $K$ of characteristic zero, $T \subseteq K$ a discrete valuation ring of $K$ with residue field $k$, a function field $F/K$ 
 in one variable such that $K$ is relatively algebraically closed in $F$, and $\mc{X}$ a regular model with normal crossing for $F$ over $T$,  a maximal unramified extension $T'$ of $T$ inside an algebraic closure of $K$, and we set $\mc{X}'=\mc{X} \times_T T'$. Throughout this section, we additionally assume $$\mathrm{char}(k)=0,$$
 which is crucial for the following:
\begin{prop}\label{reducing the curve}
$\beta(\mathcal{D}(\mc{X}_s)) \leq  \mathfrak{g}(\mc{X}_{s}/k) - \sum_{i=1}^n \mathfrak{g}(\Gamma_i/k).$
\end{prop}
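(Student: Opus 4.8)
The plan is to relate the arithmetic genus of the special fiber $\mc{X}_s$ to the genera of its irreducible components together with the combinatorics of how they glue, and to recognize that the ``excess'' in this relation is exactly the Betti number of the dual graph. Since $\car(k)=0$, each irreducible component $\Gamma_i$ of the normal crossing divisor $\mc{X}_s$ is a smooth (hence regular, geometrically reduced after base change) projective curve over its field of constants; in particular $\mfg(\Gamma_i/k)$ is the honest genus. The components meet transversally, in finitely many closed points, and at most two components pass through any given point, with local intersection number one. This is precisely the combinatorial input encoded by the bipartite dual graph $\mc{D}(\mc{X}_s)$, whose Betti number $\beta(\mc{D}(\mc{X}_s))$ counts the number of independent cycles in the gluing pattern.

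The key step is a Mayer--Vietoris / normalization computation for $H^0$ and $H^1$ of the structure sheaf of $\mc{X}_s$. Let $\nu\colon \widetilde{\mc{X}_s}=\bigsqcup_i \Gamma_i \to \mc{X}_s$ be the normalization, so that $\widetilde{\mc{X}_s}$ is the disjoint union of the components (they are already smooth). Since $\mc{X}_s$ has normal crossings, the conductor exact sequence degenerates to the short exact sequence of sheaves on $\mc{X}_s$
\[
0 \;\longrightarrow\; \mathcal{O}_{\mc{X}_s} \;\longrightarrow\; \nu_*\mathcal{O}_{\widetilde{\mc{X}_s}} \;\longrightarrow\; \mathcal{S} \;\longrightarrow\; 0,
\]
where $\mathcal{S}$ is a skyscraper sheaf supported at the intersection points $P$, with stalk at $P$ of $k$-dimension equal to the number of branches through $P$ minus one, i.e.\ exactly $1$ for each such $P$ by the normal-crossing hypothesis. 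Taking cohomology and using $H^1(\nu_*\mathcal{F})=H^1(\mathcal{F})$ for the finite map $\nu$, together with $H^i(\Gamma_j,\mathcal{O}_{\Gamma_j})$ computing $\mfg(\Gamma_j/k)$, I would read off
\[
\mfg(\mc{X}_s/k) \;=\; 1 - h^0(\mathcal{O}_{\mc{X}_s}) + h^1(\mathcal{O}_{\mc{X}_s}) \;=\; \sum_{i=1}^n \mfg(\Gamma_i/k) \;+\; \bigl(\#\{\text{intersection points}\} - n + 1\bigr),
\]
after carefully handling the field-of-constants bookkeeping: the terms $h^0(\mathcal{O}_{\Gamma_i})=[\ell_{\Gamma_i}:k]$ are not all $1$, and the stalks of $\mathcal{S}$ at a point $P$ lying on components $\Gamma,\widetilde\Gamma$ have $k$-dimension $[\kappa(P):k]$; these are exactly the integers $e_\Gamma$ and $i_x$ from the bipartite-graph setup. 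This is where I expect the main obstacle to lie — one must check that the alternating sum of the $k$-dimensions telescopes so that the $[\ell_{\Gamma_i}:k]$ contributions cancel against parts of the $[\kappa(P):k]$ contributions in a way that recovers precisely $\vert\mc{E}\vert - \vert\mc{V}\vert + 1$ for the (bipartite) dual graph, rather than some weighted variant. The cleanest route is to first base-change to $k'$, where the statement becomes the classical formula $\mfg = \sum \mfg(\text{components}) + \beta(\text{dual graph})$ for a geometrically-nc curve over an algebraically closed field (Liu, Ch.~7--10), use that $\beta(\mc{D}(\mc{X}_s)) \le \beta(\mc{D}(\mc{X}'_s))$ by \Cref{bettiinequality}, and that arithmetic genus is insensitive to base change by \cite[Corollary 5.2.27]{Liu} while $\mfg(\Gamma_i/k)=\mfg(\Gamma_i\times_k k'/k')$ likewise (the latter because each $\Gamma_i$ is smooth and $\mfg$ of the function field equals $\mfg$ of the regular model over its field of constants, which base change preserves).

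Assembling: over $k'$ we get $\mfg(\mc{X}'_s/k') = \sum_j \mfg(\Gamma'_j/k') + \beta(\mc{D}(\mc{X}'_s))$ where $\Gamma'_j$ runs over the (geometric) components; grouping the $\Gamma'_j$ according to the $G$-orbit, i.e.\ according to which $\Gamma_i$ they lie over, and using that $\sum_{\Gamma'_j \mapsto \Gamma_i} \mfg(\Gamma'_j/k') = \mfg(\Gamma_i/k)$ (stability of genus under the constant field extension $\ell_{\Gamma_i}\hookrightarrow \ell_{\Gamma_i}\otimes_k k' = \prod k'$), I obtain $\mfg(\mc{X}_s/k) = \sum_i \mfg(\Gamma_i/k) + \beta(\mc{D}(\mc{X}'_s)) \ge \sum_i \mfg(\Gamma_i/k) + \beta(\mc{D}(\mc{X}_s))$, which is the claim. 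The one point demanding care is that $\mfg(\mc{X}_s/k)=\mfg(\mc{X}'_s/k')$ really is the arithmetic genus as defined (dimension over the respective base field of $H^0,H^1$), which is exactly \cite[Corollary 5.2.27]{Liu}; and that the decomposition of $\Gamma_i\times_k k'$ into connected (here irreducible, by \Cref{decomposition base change}) pieces indexed by $\mc{E}_{\Gamma_i}$ matches the $G$-orbit structure on the cyan vertices, so the genus regrouping is legitimate.
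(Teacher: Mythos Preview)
Your argument has a genuine gap: it tacitly treats $\mc{X}_s$ (and its base change $\mc{X}'_s$) as a \emph{reduced} scheme. The normalization sequence $0 \to \mc{O}_{\mc{X}_s} \to \nu_*\mc{O}_{\widetilde{\mc{X}_s}} \to \mc{S} \to 0$ is only left-exact when the source is reduced; in general the kernel of the first map is the nilradical $\mc{N}$. Likewise, the ``classical formula'' $\mathfrak{g} = \sum_j \mathfrak{g}(\Gamma'_j) + \beta$ that you invoke over $k'$ holds for a reduced normal-crossing curve, but the special fiber of a regular model with normal crossings typically has components of multiplicity greater than one. What is missing is precisely the inequality $\mathfrak{g}\bigl((\mc{X}_s)_{\red}/k\bigr) \leq \mathfrak{g}(\mc{X}_s/k)$, and this is \emph{not} automatic: chasing the long exact sequence attached to $0 \to \mc{N} \to \mc{O}_{\mc{X}_s} \to \mc{O}_{(\mc{X}_s)_{\red}} \to 0$ shows that the needed input is $H^0(\mc{X}_s,\mc{O}_{\mc{X}_s})=k$. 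That is exactly Raynaud's cohomological flatness theorem in residue characteristic zero \cite[Proposition~6.4.2]{Ray70}, and it is the reason the hypothesis $\car(k)=0$ is singled out at the start of the section. In positive residue characteristic there exist arithmetic surfaces with $\dim_k H^0(\mc{X}_s,\mc{O}_{\mc{X}_s}) > 1$, so the argument cannot go through there.

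A secondary remark: your regrouping identity $\sum_{\Gamma'_j \mapsto \Gamma_i} \mathfrak{g}(\Gamma'_j/k') = \mathfrak{g}(\Gamma_i/k)$ is also off. The left side equals $[\ell_{\Gamma_i}:k]\,\mathfrak{g}(\Gamma_i/\ell_{\Gamma_i})$, whereas the arithmetic genus over $k$ is $1 + [\ell_{\Gamma_i}:k]\bigl(\mathfrak{g}(\Gamma_i/\ell_{\Gamma_i}) - 1\bigr)$; these differ by $[\ell_{\Gamma_i}:k]-1$. Fortunately the discrepancy has the favourable sign for the inequality you want, so it does not break the argument once the main gap above is filled. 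The paper's own proof avoids both issues by staying over $k$: it first cites \cite[Proposition~10.1.51]{Liu} for the reduced curve $\mc{Y}=(\mc{X}_s)_{\red}$ to get $\beta(\mc{D}(\mc{X}_s)) \leq \mathfrak{g}(\mc{Y}/k) - \sum_i \mathfrak{g}(\Gamma_i/k)$, and then invokes Raynaud together with the cohomology of $0 \to \mc{N} \to \mc{O}_{\mc{X}_s} \to \mc{O}_{\mc{Y}} \to 0$ to conclude $\mathfrak{g}(\mc{Y}/k) \leq \mathfrak{g}(\mc{X}_s/k)$.
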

\begin{proof}
Let $\mathcal{Y}$ be the special fiber  $\mathcal{X}_s$ with the induced reduced structure, that is with structure sheaf $\mc{O}_{\mc{Y}}= \mc{O}_{\mc{X}_s}/\mc{N}$, where $\mc{N}$ is the ideal sheaf of nilpotents in $\mc{O}_{\mc{X}_s}$. Then we have $\beta(\mc{D}(\mathcal{X}_s)) \leq  \mathfrak{g}(\mc{Y}/k) - \sum_i \mathfrak{g}(\Gamma_i/k)$ by \cite[Proposition 10.1.51]{Liu}. Furthermore, by a cohomolgical flatness result when $\mathrm{char}(k)=0$, see \cite[Proposition 6.4.2]{Ray70}, we have that  $H^0(\mc{X}_s, \mathcal{O}_{\mc{X}_s})=k$.
Then  $$\mathfrak{g}(\mc{Y}/k) = 1 - \mathrm{dim}_k  H^0(\mc{Y}, \mathcal{O}_{Y}) + \mathrm{dim}_k H^1(\mc{Y}, \mathcal{O}_{Y}) \leq \mathrm{dim}_k H^1(\mc{X}_s, \mathcal{O}_{\mc{X}_s}) =\mathfrak{g}(\mc{X}_{s}/k)$$
follows from the exact sequence of $k$-vector spaces $$0 \to 0 \to k \to H^0(\mc{Y}, \mc{O}_\mc{Y}) \to  H^1(\mc{X}_s, \mc{N})\to H^1(\mc{X}_s,\mc{O}_{\mc{X}_s})  \to H^1(\mc{Y},\mc{O}_{\mc{Y}}) \to 0$$ obtained by applying sheaf cohomology to the sequence 
$$ 0 \to \mc{N} \to \mc{O}_{\mc{X}_s} \to \mc{O}_\mc{Y} \to 0$$
of  $\mc{O}_{\mc{X}_s}$- modules.
\end{proof}
\begin{cor}\label{Betti-Genus}
$$\beta(\mathcal{D}(\mc{X'}_s)) \leq  \mathfrak{g}(F/K) - \sum_{w \in \Omega_T(F)} [\ell_w:k]\cdot \mathfrak{g}(\kappa_w/k).$$
\end{cor}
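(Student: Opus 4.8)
The plan is to apply \Cref{reducing the curve} not to $\mc{X}$ over $T$ but to the base change $\mc{X}'$ over $T'$. This is legitimate: $K'$ and $k'$ have characteristic zero, by \Cref{bettiinequality} $\mc{X}'$ is a regular model with normal crossings for $FK'/T'$, and $K'$ is relatively algebraically closed in $FK'$ because the generic fibre $\mc{X}'_0 \simeq \mc{X}_0 \times_K K'$ inherits geometric integrality over $K'$ from $\mc{X}_0$ over $K$. \Cref{reducing the curve} then yields $\beta(\mc{D}(\mc{X}'_s)) \leq \mathfrak{g}(\mc{X}'_s/k') - \sum_{\Gamma'} \mathfrak{g}(\Gamma'/k')$, the sum ranging over the irreducible components $\Gamma'$ of $\mc{X}'_s$; it remains to rewrite the two terms on the right.

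For the first term, base-change stability of the arithmetic genus (\cite[Corollary 5.2.27]{Liu}) applied to $\mc{X}'_s \simeq \mc{X}_s \times_k k'$ gives $\mathfrak{g}(\mc{X}'_s/k') = \mathfrak{g}(\mc{X}_s/k)$; and since $\mc{X} \to \Spec(T)$ is flat and projective with connected base, the Euler characteristic of the structure sheaf is the same on the special and on the generic fibre, so $\mathfrak{g}(\mc{X}_s/k) = \mathfrak{g}(\mc{X}_0/K) = \mathfrak{g}(F/K)$, the last equality because $\mc{X}_0$ is the geometrically integral regular projective model of $F/K$ and $K$ is relatively algebraically closed in $F$.

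For the second term, I would use \Cref{decomposition base change} for each irreducible component $\Gamma$ of $\mc{X}_s$. Since $\mc{X}_s$ has normal crossings, $\Gamma$ is smooth and $\ell_\Gamma$ is exactly the relative algebraic closure of $k$ in $\kappa_{v_\Gamma} = k(\Gamma)$, so the components of $\mc{X}'_s$ lying over $\Gamma$ are precisely the $[\ell_\Gamma : k]$ schemes $\Gamma \times_{\ell_\Gamma, \sigma} k'$ for $\sigma \in \mc{E}_\Gamma$, each of arithmetic genus $\mathfrak{g}(\Gamma/\ell_\Gamma) = \mathfrak{g}(\kappa_{v_\Gamma}/k)$ (base-change stability again, and then the identity $\mathfrak{g}(E/\ell) = \mathfrak{g}(C/\widetilde{\ell})$ recalled from \cite[Proposition 2.1]{BG24}). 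Hence $\sum_{\Gamma'} \mathfrak{g}(\Gamma'/k') = \sum_{\Gamma \in \Omega(\mc{X}_s)} [\ell_\Gamma : k]\, \mathfrak{g}(\kappa_{v_\Gamma}/k)$. As $\Gamma \mapsto v_\Gamma$ embeds $\Omega(\mc{X}_s)$ into $\Omega_T(F)$ and, by \Cref{algebrorational}, every $w \in \Omega_T(F)$ outside this image satisfies $\mathfrak{g}(\kappa_w/k) = 0$, the right-hand side of the last identity equals $\sum_{w \in \Omega_T(F)} [\ell_w : k]\, \mathfrak{g}(\kappa_w/k)$, and the corollary follows.

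The combinatorial and scheme-theoretic input is already packaged in \Cref{decomposition base change} and \Cref{algebrorational}, so I expect the only delicate point to be the invariance of the arithmetic genus along the flat projective family $\mc{X} \to \Spec(T)$ (equivalently, constancy of $\chi(\mc{O})$ over the connected base $\Spec(T)$), which is what lets one replace $\mathfrak{g}(\mc{X}_s/k)$ by $\mathfrak{g}(F/K)$; this is standard but needs a careful citation, whereas everything else is bookkeeping.
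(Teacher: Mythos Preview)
Your proposal is correct and follows essentially the same line as the paper's proof: apply \Cref{reducing the curve} to $\mc{X}'$ over $T'$, identify $\mathfrak{g}(\mc{X}'_s/k')$ with $\mathfrak{g}(F/K)$ via base-change stability and constancy of the arithmetic genus along the flat projective family (the paper cites \cite[Corollary~8.3.6]{Liu} for the latter), decompose each $\Gamma\times_k k'$ into $[\ell_\Gamma:k]$ copies of $\Gamma\times_{\ell_\Gamma} k'$, and invoke \Cref{algebrorational} to pass from $\Omega(\mc{X}_s)$ to $\Omega_T(F)$. Your extra care in checking that $K'$ is relatively algebraically closed in $FK'$ (so that the standing hypotheses of the section carry over to $\mc{X}'$) is a welcome addition that the paper leaves implicit.
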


\begin{proof}
 Since $K$ is algebraically closed in $F$, we have that 
 $\mc{X}_0$ is geometrically integral, see \cite[Corollary 2.3.14]{Liu}.  Moreover, we have that $\mathfrak{g}(F/K)=\mathfrak{g}(\mc{X}_0/K)$, and by \cite[Corollary 8.3.6]{Liu}, we have that $\mathfrak{g}(\mc{X}_0/K)=\mathfrak{g}(\mc{X}_s/k)$.
 Since the arithmetic genus of a curve is stable under base change and since $\mc{X}_s \times_k k' \simeq \mc{X}'_s$, we have that $\mathfrak{g}(\mc{X}_s/k)=\mathfrak{g}(\mc{X}'_s/k')$, and hence $\mathfrak{g}(\mc{X}'_s/k')=\mathfrak{g}(F/K)$.
 On the other hand, every irreducible component of $\mc{X}'_s$ is a connected component of $\Gamma \times_k k'$ for an irreducible component $\Gamma$ of $\mc{X}_{s}$.
 More precisely $\Gamma \times_k k'$ is the disjoint union of $[\ell_\Gamma: k]$ copies of $\Gamma \times_{\ell_\Gamma} k'$. Note that $\mathfrak{g}(\Gamma \times_{\ell_\Gamma} k' /k')=\mathfrak{g}(\Gamma/\ell_\Gamma)=\mathfrak{g}(\kappa_w/k)$, where $w$ is the discrete valuation on $F$ induced by the generic point of $\Gamma$. Considering \Cref{algebrorational}, we obtain the inequality by applying the inequality of \Cref{reducing the curve} to the regular model with normal crossing $\mc{X}'$ of $F'=FK'$ over $T'$.
\end{proof}

\begin{rem}In \cite[Theorem 5.3]{BG24}, the previous inequality was obtained only for the minimal regular model, but where $k$ was only required to be perfect, instead of our standing assumption that $\car(k)=0$.
\end{rem}

The statements of the following Lemma is probably well known. 

\begin{lem}\label{rational and real specialization}
    \begin{enumerate}
    \item[a)] If $K$ admits a finite extension $L/K$ in which $T$ ramifies completely and such that $L$ contains the residue field of  a $K$-trivial valuation on $F$, then $\mc{X}_s(k) \neq \emptyset$.
    \item[b)] If $F$ is real and $T$ is henselian, then there exists a finite real extension $\ell/k$ such that $\mc{X}_s(\ell) \neq \emptyset$.
    \end{enumerate}
\end{lem}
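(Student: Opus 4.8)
The plan is to deduce both statements from the valuative criterion of properness for $\mc{X}\to\Spec(T)$, the only extra ingredients being standard facts about henselian valuations of residue characteristic zero on real fields and, for part b), the Artin--Lang homomorphism theorem. In both cases the strategy is the same: produce a point of the generic fibre over a suitable finite (respectively, real) extension of $K$, lift it over the corresponding discrete valuation ring by properness, and read off the residue field of the specialisation.

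\emph{Part a).} The hypothesis supplies a $K$-trivial valuation $\nu$ on $F$ whose residue field $\kappa_\nu$ embeds into $L$. Since $\mc{X}_0$ is projective over $K$, $\nu$ has a centre $x$ on $\mc{X}_0$, and $\kappa(x)$ embeds into $\kappa_\nu$, hence into $L$; as $F$ does not embed into the finite extension $L/K$, the point $x$ is not the generic point of $\mc{X}_0$, so $x$ is closed and $\Spec(L)\to\Spec(\kappa(x))\to\mc{X}_0\hookrightarrow\mc{X}$ is an $L$-point of $\mc{X}$. Since $T$ ramifies completely in $L$, there is a discrete valuation ring $T_L$ of $L$ dominating $T$ with residue field $k$. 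The base change $\mc{X}\times_T T_L\to\Spec(T_L)$ is proper, and because the two maps $T\hookrightarrow T_L\hookrightarrow L$ and $T\hookrightarrow K\hookrightarrow L$ coincide, the valuative criterion extends the $L$-point to a section $\Spec(T_L)\to\mc{X}\times_T T_L$. Composing with the projection to $\mc{X}$ and evaluating at the closed point of $\Spec(T_L)$ gives a point $y\in\mc{X}$ that lies on $\mc{X}_s$ (since $\mathfrak{m}_{T_L}\cap T=\mathfrak{m}_T$) and whose residue field admits a $k$-algebra homomorphism into $k$; hence $\kappa(y)=k$, so $\mc{X}_s(k)\neq\emptyset$.

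\emph{Part b).} Fix an ordering $P$ of $F$, put $Q=P\cap K$, and let $R_0$ be a real closure of $(K,Q)$. Since $\mc{X}_0$ is geometrically integral over $K$, the curve $\mc{X}_0\times_K R_0$ is integral, with function field the compositum $FR_0$ formed inside a real closure of $(F,P)$; in particular this function field is formally real. The Artin--Lang homomorphism theorem, applied to the coordinate ring of an affine open of $\mc{X}_0\times_K R_0$, then produces an $R_0$-point of $\mc{X}_0$; as $\mc{X}_0$ is of finite type over $K$, this point is already defined over some finite subextension $K\subseteq L\subseteq R_0$, and $L$ is real, being a subfield of $R_0$. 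Because $T$ is henselian, it extends uniquely to a valuation $v_L$ of $L$, which is again henselian, with valuation ring a discrete valuation ring $T_L$ whose residue field $\ell_0$ is finite over $k$. As $(L,v_L)$ is henselian of residue characteristic zero, Hensel's lemma gives $1+\mathfrak{m}_{v_L}\subseteq L^{\times 2}$, and one checks in the standard way that this forces $T_L$ to be convex for an ordering of $L$, so that $\ell_0$ inherits an ordering and is real. Running the argument of part a) with this $T_L$ and the $L$-point of $\mc{X}_0$ yields a point $y\in\mc{X}_s$ with $\kappa(y)\hookrightarrow\ell_0$; since $\ell_0/k$ is algebraic, $y$ is a closed point, so $\ell:=\kappa(y)$ is a finite extension of $k$ which is real as a subfield of $\ell_0$, and $\mc{X}_s(\ell)\neq\emptyset$.

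I expect the routine part to be the scheme-theoretic bookkeeping around the valuative criterion (commutativity of the relevant squares, identification of residue fields, verifying that the special-fibre point is closed). The step I would treat most carefully — and the main obstacle — is the passage through real closures in part b): checking that $FR_0$ is formally real and genuinely is the function field of $\mc{X}_0\times_K R_0$ (this is precisely where geometric integrality of $\mc{X}_0$ is used), invoking Artin--Lang in the appropriate form, and then descending the resulting real point to a finite subextension so that the valuative criterion is applied over an honest discrete valuation ring rather than over the non-discretely valued field $R_0$.
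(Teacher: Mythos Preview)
Your argument is correct, but it proceeds along rather different lines than the paper's. The paper works concretely with a closed projective embedding of $\mc{X}$ over $T$: given an $L$-point of $\mc{X}_0$, it scales the projective coordinates to lie in $T_L$ with one of them equal to $1$, then reduces modulo $\mathfrak{m}_{T_L}$ to obtain a point of $\mc{X}_s$ defined over the residue field of $T_L$. You instead invoke the valuative criterion of properness for $\mc{X}\to\Spec(T)$, which is the scheme-theoretic abstraction of the same idea and avoids choosing coordinates. Both routes arrive at the same specialisation map $\mc{X}_0(L)\to\mc{X}_s(\ell_0)$; yours is cleaner conceptually, the paper's slightly more elementary.

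For part b), the paper simply cites an external reference (\cite{G15}) for the existence of an $L$-rational point on $\mc{X}_0$ over some finite real extension $L/K$, and records that the residue field of the unique henselian extension $T_L$ is real because $T_L$ is henselian. You instead produce the $L$-point yourself via Artin--Lang over a real closure followed by descent to a finite subextension, and you spell out the convexity argument showing $\ell_0$ is real. Your version is thus more self-contained; the paper's is shorter by outsourcing the real-point existence. Your care in checking that $FR_0$ is the function field of $\mc{X}_0\times_K R_0$ (via geometric integrality) is warranted and correctly identified as the key step; the paper sidesteps this by quoting the cited result directly.
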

\begin{proof}
The model $\mc{X}$ over $T$ can be defined by finitely homogeneous polynomials $f_1(X_0,\ldots,X_m), \ldots, f_r(X_0,\ldots,X_m)$ with coefficients in $T$. The special fiber of $\mc{X}_s$ is then defined by the polynomials
$\overline{f_1}(X_0,\ldots,X_m), \ldots, \overline{f_r}(X_0,\ldots,X_m)$ obtained by projecting the coefficients into the residue field $k$.
A $K$-rational place of $F$ corresponds to a $K$-rational point of the generic fiber $\mc{X}_0$, which in turn corresponds to a projective tuple $[x_0:\ldots :x_m] \in \mathbb{P}(K^m)$ solving the homogeneous equations given by the previous homogeneous polynomials. We may assume that all $x_i \in T$ and that without loss of generality $x_0=1$. Then $[\overline{x_0}:\ldots :\overline{x_m}] \in \mathbb{P}(k^m)$ defines a $k$-rational point in $\mc{X}_s$. This proves a).
Let us now assume  that $T$ is henselian and that $F$ is formally real. The latter implies in particular that $K$ is formally real, which implies that $k$ is formally real, by the assumption that $T$ is henselian. Moreover, $F$ being formally real is equivalent to the existence of an $L$-rational point of $\mc{X}_K$ for some finite real extension $L/K$, see for example \cite[Proposition 2.3]{G15}.
Since $T$ is henselian, there exists a unique valuation extension to $L$, and we denote its valuation ring by $T_L$. We denote its residue field by $\ell$, which is formally real, since $T_L$ is henselian.
If $[x_0:\ldots :x_m] \in \mathbb{P}(L^m)$ is a solution to the system of homogeneous equation, we may assume that all $x_i \in T_L$ and without loss of generality $x_0=1$. Hence, $[\overline{x_0}:\ldots :\overline{x_m}] \in \mathbb{P}(\ell^m)$, and it defines an $\ell$-point in $\mc{X}_s$. This proves b).
\end{proof}

We say that \textit{$F/K$ admits a totally $T$-ramified place} if there exists a valuation on $F$ that is trivial on $K$ such that its residue field (which is a finite extension of $K$) admits a unique, totally ramified, valuation ring extension of $T$. We are now in position to prove our two geometric arithmetic main theorems of this article.

\begin{thm}\label{rational-genus-inequality}
 $$\vert \Omega_{T}(F) \setminus \Omega^{\mathrm{rat}}_T(F) \vert + \sum_{w \in \Omega_T(F)} [\ell_w:k] \mathfrak{g}(\kappa_w/k)  \, \leq \, \mathfrak{g}(F/K) +1.$$
 Moreover, this inequality is strict if $F/K$ admits a totally $T$-ramified place.
\end{thm}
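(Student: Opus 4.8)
The plan is to read off the inequality from \Cref{Betti-Genus} and \Cref{non-rat vorteces}, the latter applied to the trivial field property $\mathcal{P}=\text{``}1\neq 0\text{''}$, which holds in every field and trivially satisfies going up. For this $\mathcal{P}$ one has $\Omega^{\mathcal{P}}_T(F)=\Omega_T(F)$ and $\Omega^{\mathcal{P}}(\mc{X}_s)=\Omega(\mc{X}_s)$, so \Cref{non-rat vorteces} together with \Cref{bettiinequality} gives
$$\vert \Omega_T(F)\setminus\Omega^{\mathrm{rat}}_T(F)\vert \;\overset{(1)}{\leq}\; \vert \Omega(\mc{X}_s)\setminus\Omega^{\mathrm{rat}}_{\mathrm{int}}(\mc{X}_s)\vert \;\overset{(2)}{\leq}\; \beta(\mc{D}(\mc{X}_s))+1 \;\leq\; \beta(\mc{D}(\mc{X}'_s))+1 ,$$
while \Cref{Betti-Genus} gives $\beta(\mc{D}(\mc{X}'_s))\leq\mathfrak{g}(F/K)-\sum_{w\in\Omega_T(F)}[\ell_w:k]\,\mathfrak{g}(\kappa_w/k)$. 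Chaining the last bound onto the displayed inequalities and adding $\sum_{w\in\Omega_T(F)}[\ell_w:k]\,\mathfrak{g}(\kappa_w/k)$ to both sides gives precisely the asserted inequality.

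For the strictness statement, suppose $F/K$ admits a totally $T$-ramified place $v$, with residue field $L_v$ (a finite extension of $K$). Then $T$ ramifies completely in $L_v$, and $L_v$ trivially contains the residue field of a $K$-trivial valuation on $F$, so \Cref{rational and real specialization}(a) yields a point $P\in\mc{X}_s(k)$. As $\mc{X}_s$ is a normal crossing divisor on the regular surface $\mc{X}$, the point $P$ lies on exactly one or exactly two irreducible components of $\mc{X}_s$, and any component $\Gamma$ through $P$ satisfies $\ell_\Gamma=k$ because $\ell_\Gamma\hookrightarrow\kappa(P)=k$.

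If $P$ lies on a single component $\Gamma$ with $\Gamma\notin\Omega^{\mathrm{rat}}_{\mathrm{int}}(\mc{X}_s)$, then $\Gamma(\ell_\Gamma)=\Gamma(k)$ is nonempty, so the first strictness clause of \Cref{non-rat vorteces} makes $(1)$ strict. In the complementary case — $P$ lying on two components, or on a single component $\Gamma\in\Omega^{\mathrm{rat}}_{\mathrm{int}}(\mc{X}_s)$ — there exist distinct components $\Gamma_1\neq\Gamma_2$ of $\mc{X}_s$ meeting at a point $Q$ with $\kappa(Q)=k$: take $Q=P$ when $P$ is a node, and otherwise take for $Q$ an $\ell_\Gamma$-rational intersection point of $\Gamma$ with another component, which is $k$-rational since $\ell_\Gamma=k$. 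Then $\ell_{\Gamma_1}=\ell_{\Gamma_2}=\kappa(Q)=k$, so $\Gamma_1$, $\Gamma_2$ and $Q$ remain single, $G$-fixed vertices after base change to $k'$, and the connected component of $\mc{D}(\mc{X}'_s)^G$ containing the path $\Gamma_1-Q-\Gamma_2$ is a \emph{non-singular} cyan $G$-rigidity $\mc{R}$ (a connected component of $\mc{D}(\mc{X}'_s)^G$, hence a rigidity with rigidifier $G$, with at least two cyan vertices). Consequently $\{\mc{R}\}$ is a $G$-orbit in $\mc{D}(\mc{X}'_s)_G$ distinct from the $N(\mc{D}(\mc{X}'_s))$ orbits of singular cyan $G$-rigidities, so by \Cref{cyan vorteces are nonrats} and \Cref{border-line vorteces are rational}
$$\vert \Omega(\mc{X}_s)\setminus\Omega^{\mathrm{rat}}_{\mathrm{int}}(\mc{X}_s)\vert \;=\; N(\mc{D}(\mc{X}'_s)) \;\leq\; \left\vert \mfaktor{G}{\mc{D}(\mc{X}'_s)_G}\right\vert-1 \;\leq\; \beta(\mc{D}(\mc{X}'_s)) ,$$
which is strictly less than $\beta(\mc{D}(\mc{X}'_s))+1$; combined with \Cref{Betti-Genus} this forces the asserted inequality to be strict. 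The only genuinely delicate point here is this last step: extracting a non-singular $G$-rigidity, and in particular recognizing that a $k$-rational point which sits on a single component that merely meets another one $\ell_\Gamma$-rationally already forces such a rigidity to exist. Everything else is bookkeeping with the cited results.
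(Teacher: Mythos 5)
Your proof is correct, and it follows the paper's overall strategy (combine the graph-theoretic bound from \Cref{non-rat vorteces} with the Betti--genus inequality, then use the $k$-rational point from \Cref{rational and real specialization}(a) to upgrade to a strict inequality). For the non-strict part you route through \Cref{Betti-Genus} and \Cref{bettiinequality}, whereas the paper invokes \Cref{reducing the curve} directly on $\mc{X}_s$; these are equivalent.

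Where your proof genuinely diverges, and improves on, the paper's is in the strictness argument for the second case. The paper, after reducing to the situation where the $k$-rational point forces two components to meet in a $k$-rational point, simply writes ``Hence $\vert \Omega(\mc{X}_s) \setminus \Omega^\mathrm{rat}_{\mathrm{int}}(\mc{X}_s) \vert < \beta + 1$, by \Cref{non-rat vorteces},'' implicitly appealing to clause $(ii)$. But clause $(ii)$ is stated in terms of a $\neg\mathcal{P}$-minimal extension $\ell/k$, and for the empty property $\mathcal{P}=\text{``}1\neq 0\text{''}$ the predicate $\neg\mathcal{P}$ is never satisfiable by a field, so there is no $\neg\mathcal{P}$-minimal $\ell/k$ and clause $(ii)$ is vacuous. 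What actually does the work is the observation buried in the \emph{proof} of \Cref{non-rat vorteces} that a non-singular $G$-rigidity forces strictness via \Cref{border-line vorteces are rational}. You extract exactly that: you show the $k$-rational node (or the $k$-rational intersection point of $\Gamma$ with a neighbour) yields a $G$-fixed purple vertex whose two $G$-fixed cyan neighbours put it in a non-singular rigidity, and then conclude $N(\mc{D}(\mc{X}'_s)) \leq \vert \mfaktor{G}{\mc{D}(\mc{X}'_s)_G}\vert - 1 \leq \beta$ directly via \Cref{cyan vorteces are nonrats} and \Cref{border-line vorteces are rational}. This is the cleaner way to present it: you fill in a small gap rather than repeating the paper's shortcut. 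Your case split (is $P$ on one or two components; if one, is that component in $\Omega^{\mathrm{rat}}_{\mathrm{int}}(\mc{X}_s)$) differs cosmetically from the paper's (does some component with a $k$-point lie outside $\Omega^{\mathrm{rat}}_{\mathrm{int}}(\mc{X}_s)$), but both are exhaustive and correct.
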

\begin{proof}
 Let $\Gamma_1, \ldots, \Gamma_n$ be the irreducible components of $\mc{X}_s$. Let $\mc{D}(\mc{X}_s)$ be the dual graph of $\mc{X}_s$ and $\beta$ its Betti number. Then we have that 
$$\beta \leq \mathfrak{g}(C/K) - \sum_{i=1}^n \mathfrak{g}(\Gamma_i/k)=\mathfrak{g}(F/K) - \sum_{i=1}^n [\ell_{\Gamma_i}:k] \mathfrak{g}(k(\Gamma_i)/k).$$
On the other hand, by \Cref{non-rat vorteces} applied to the empty property $\mathcal{P}$, we have that $$ \vert \Omega_{T}(F)  \setminus \Omega^{\mathrm{rat}}_{T}(F)  \vert \leq  \vert \Omega(\mc{X}_s) \setminus \Omega^\mathrm{rat}_{\mathrm{int}}(\mc{X}_s) \vert \leq \beta + 1.$$
If $F$ admits a $K$-rational place, or a place to an totally ramified extension of $K$ with respect to $T$, then $\mc{X}_s(k) \neq \emptyset$ by \Cref{rational and real specialization}.  Clearly, since all the irreducible components $\Gamma$ of $\mc{X}_s$ are smooth over $k$, we have that $\ell_\Gamma=k$ for every component $\Gamma$ with $\Gamma(k) \neq \emptyset$.
First consider the case, where there exists an irreducible component $\Gamma$ with $\Gamma(k)\neq \emptyset$ but $\Gamma$ intersects no other irreducible component of $\mc{X}_s$ in a $k$-rational point. Then $ \vert \Omega_T(F)  \setminus \Omega^{\mathrm{rat}}_T(F)  \vert <  \vert \Omega(\mc{X}_s) \setminus \Omega^\mathrm{rat}_{\mathrm{int}}(\mc{X}_s) \vert $, by \Cref{non-rat vorteces}.  
In the complementary case, we have that every irreducible component $\Gamma$ with $\Gamma(k) \neq \emptyset$ intersects at least one other irreducible component in a $k$-rational point. In particular, since $\mc{X}_s(k) \neq \emptyset$, there exists at least one such component.
Hence $\vert \Omega(\mc{X}_s) \setminus \Omega^\mathrm{rat}_{\mathrm{int}}(\mc{X}_s) \vert < \beta + 1$ in this case,  by \Cref{non-rat vorteces}.
\end{proof}

As already mentioned in the introduction, we set
$$\Omega^{\text{r}}_T(F):=\{w \in \Omega_T(F) \mid \kappa_w \text{ is  real }\},$$
as well as $$ \Omega^{\text{n/r}}_T(F):=\{w \in \Omega_T(F) \mid \kappa_w \text{ is nonreal and }  \overline{k}^{\mathrm{alg}} \cap \kappa_w  \text{ is real }\}.$$ 
Note that both sets are empty if $K$ is nonreal, and the first set is empty if $F$ is nonreal.
Since no nonreal function field with real field of constants can permit a rational place,  we obtain directly from \Cref{rational-genus-inequality} the following genus inequality:

\begin{cor}\label{nonreal-genus-inequality} 
 $$\sum_{w \in \Omega^{\text{n/r}}_T(F)} \!\!\!\! 1+ [\ell_w:k]\cdot \mathfrak{g}(\kappa_w/k) \,\, \leq  \,\,\mathfrak{g}(F/K) + 1.$$   
\end{cor}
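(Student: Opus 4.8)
The plan is to derive \Cref{nonreal-genus-inequality} directly from \Cref{rational-genus-inequality}, the only genuine content being the set-theoretic inclusion $\Omega^{\text{n/r}}_T(F) \subseteq \Omega_T(F) \setminus \Omega^{\mathrm{rat}}_T(F)$; once this is in place the corollary is pure bookkeeping, using only that all the summands $1 + [\ell_w:k]\cdot\mathfrak{g}(\kappa_w/k)$ are nonnegative.

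First I would establish the inclusion. Let $w \in \Omega^{\text{n/r}}_T(F)$, so that $\kappa_w$ is nonreal while $\ell_w = \overline{k}^{\mathrm{alg}} \cap \kappa_w$ is real. If $w$ were in $\Omega^{\mathrm{rat}}_T(F)$, then by definition there would be a valuation on $\kappa_w$ trivial on $\ell_w$ with residue field $\ell_w$, i.e.\ a place $\kappa_w \dashrightarrow \ell_w$. Recalling that the property ``nonreal'' satisfies going up (a formally real residue field lifts an ordering to the fraction field), nonreality of $\kappa_w$ would force $\ell_w$ to be nonreal, contradicting the hypothesis. Hence $w \notin \Omega^{\mathrm{rat}}_T(F)$, and since trivially $w \in \Omega_T(F)$, the claimed inclusion follows.

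Next I would split the left-hand sum of the corollary into its constant part and its genus part and estimate each against the corresponding term of \Cref{rational-genus-inequality}. The constant contributions give $\sum_{w \in \Omega^{\text{n/r}}_T(F)} 1 = |\Omega^{\text{n/r}}_T(F)| \leq |\Omega_T(F) \setminus \Omega^{\mathrm{rat}}_T(F)|$ by the inclusion just proved. The genus contributions give $\sum_{w \in \Omega^{\text{n/r}}_T(F)} [\ell_w:k]\cdot\mathfrak{g}(\kappa_w/k) \leq \sum_{w \in \Omega_T(F)} [\ell_w:k]\cdot\mathfrak{g}(\kappa_w/k)$, since $\Omega^{\text{n/r}}_T(F) \subseteq \Omega_T(F)$ and every term $[\ell_w:k]\cdot\mathfrak{g}(\kappa_w/k)$ is nonnegative. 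Adding these two inequalities and then invoking \Cref{rational-genus-inequality} yields $\sum_{w \in \Omega^{\text{n/r}}_T(F)} 1 + [\ell_w:k]\cdot\mathfrak{g}(\kappa_w/k) \leq \mathfrak{g}(F/K) + 1$, which is the assertion.

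I do not anticipate a real obstacle here; the one step that deserves attention is the incompatibility of $\Omega^{\text{n/r}}_T(F)$ with $\Omega^{\mathrm{rat}}_T(F)$, where one must use that a rational place of $\kappa_w/\ell_w$ is exactly a place with residue field $\ell_w$, together with the fact that nonreality propagates along places. Everything else is monotonicity of finite sums of nonnegative terms over a subset.
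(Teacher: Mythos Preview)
Your proposal is correct and follows exactly the paper's approach: the paper's entire argument is the single observation that ``no nonreal function field with real field of constants can permit a rational place,'' i.e.\ the inclusion $\Omega^{\text{n/r}}_T(F) \subseteq \Omega_T(F) \setminus \Omega^{\mathrm{rat}}_T(F)$, after which \Cref{rational-genus-inequality} gives the bound immediately. You have simply unpacked that one sentence into its constituent steps.
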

This corollary is obviously only nontrivial when $F$ is nonreal.
We now prove a nontrivial genus inequality in the real case. We need to additionally assume henselianity of $T$. It should be noted already that due to optimality examples in the latter application to sums of squares in function fields, both inequalities (in the nonreal case and in the real case) turn out to be optimal.

\begin{thm}\label{real-genus-inequality} Suppose $T$  is henselian and $F$ is real. Then
$$\sum_{w \in \Omega^{\text{r}}_T(F)} \!\! [\ell_w:k] \cdot \mathfrak{g}(\kappa_w/k) \, \, + \sum_{w \in \Omega^{\text{n/r}}_T(F)} \!\!\!\! 1+ [\ell_w:k]\cdot \mathfrak{g}(\kappa_w/k) \,\, \leq  \,\,\mathfrak{g}(F/K).$$
\end{thm}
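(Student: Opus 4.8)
The plan is to combine the genus--Betti bound of \Cref{Betti-Genus} with \Cref{non-rat vorteces}, applied to the field property $\mathcal{P}=\text{``is nonreal''}$, and to extract the extra ``$-1$'' (compared with \Cref{nonreal-genus-inequality}) from the real specialization provided by \Cref{rational and real specialization}(b). Writing $\beta'=\beta(\mathcal{D}(\mc{X}'_s))$, \Cref{Betti-Genus} gives $\beta'+\sum_{w\in\Omega_T(F)}[\ell_w:k]\,\mathfrak{g}(\kappa_w/k)\leq\mathfrak{g}(F/K)$. Since $\Omega^{\text{r}}_T(F)$ and $\Omega^{\text{n/r}}_T(F)$ are disjoint subsets of $\Omega_T(F)$ and all residual genera are non-negative, the left-hand side of the theorem equals $|\Omega^{\text{n/r}}_T(F)|+\sum_{w\in\Omega^{\text{r}}_T(F)\cup\Omega^{\text{n/r}}_T(F)}[\ell_w:k]\,\mathfrak{g}(\kappa_w/k)$, which is at most $|\Omega^{\text{n/r}}_T(F)|+\sum_{w\in\Omega_T(F)}[\ell_w:k]\,\mathfrak{g}(\kappa_w/k)$. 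Hence it suffices to prove the sharpened count $|\Omega^{\text{n/r}}_T(F)|\leq\beta'$, and then recombine with \Cref{Betti-Genus}.

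First I would check that $\Omega^{\text{n/r}}_T(F)\subseteq\Omega^{\mathcal{P}}_T(F)\setminus\Omega^{\mathrm{rat}}_T(F)$: if $w\in\Omega^{\text{n/r}}_T(F)$ then $\kappa_w$ is nonreal, so $w\in\Omega^{\mathcal{P}}_T(F)$, while a rational place of $\kappa_w/\ell_w$ would produce a residue homomorphism $\kappa_w\to\ell_w$ and force $\ell_w$ nonreal (``nonreal'' satisfies going up), contradicting $w\in\Omega^{\text{n/r}}_T(F)$. Together with \Cref{non-rat vorteces} and \Cref{bettiinequality} this already yields $|\Omega^{\text{n/r}}_T(F)|\leq|\Omega^{\mathcal{P}}_T(F)\setminus\Omega^{\mathrm{rat}}_T(F)|\leq|\Omega^{\mathcal{P}}(\mc{X}_s)\setminus\Omega^{\mathrm{rat}}_{\mathrm{int}}(\mc{X}_s)|\leq\beta'+1$, so everything comes down to showing that one of these inequalities is strict.

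To produce the strict step I would invoke \Cref{rational and real specialization}(b): since $F$ is real and $T$ is henselian, $\mc{X}_s$ has a point over some finite real extension of $k$, and choosing such an extension $\ell/k$ of minimal degree makes $\ell$ real and $\neg\mathcal{P}$-minimal in $\mc{X}_s$, and yields a closed point $P\in\mc{X}_s$ whose residue field is $k$-isomorphic to $\ell$. As $\mc{X}_s$ is a strict normal crossings divisor, $P$ lies on one or two irreducible components $\Gamma$, each satisfying $\Gamma(\ell)\neq\emptyset$ and hence $\Gamma\notin\Omega^{\mathcal{P}}(\mc{X}_s)$. I would then argue a dichotomy: either some component $\Gamma$ through $P$ --- or, when all such components lie in $\Omega^{\mathrm{rat}}_{\mathrm{int}}(\mc{X}_s)$, a further component meeting one of them at a point rational over its own constant field --- fails to lie in $\Omega^{\mathrm{rat}}_{\mathrm{int}}(\mc{X}_s)$, so that this component lies in $\Omega(\mc{X}_s)\setminus\Omega^{\mathrm{rat}}_{\mathrm{int}}(\mc{X}_s)$ but not in $\Omega^{\mathcal{P}}(\mc{X}_s)$ (it too has a point over the real field $\ell$); or else two components of $\Omega^{\mathrm{rat}}_{\mathrm{int}}(\mc{X}_s)$ meet at a point rational over the $\neg\mathcal{P}$-minimal field $\ell$, which is precisely case $(ii)$ of \Cref{non-rat vorteces}. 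In both situations one concludes, exactly as in the proof of \Cref{non-rat vorteces}, that $|\Omega^{\mathcal{P}}(\mc{X}_s)\setminus\Omega^{\mathrm{rat}}_{\mathrm{int}}(\mc{X}_s)|\leq\beta'$, hence $|\Omega^{\text{n/r}}_T(F)|\leq\beta'$, and combining with \Cref{Betti-Genus} as in the first paragraph proves the theorem. The mechanism that closes the dichotomy is that $\neg\mathcal{P}$-minimality of $\ell$ forces $\ell_\Gamma=\ell$ whenever $\Gamma$ has a point over its own constant field $\ell_\Gamma$, because then $\mc{X}_s(\ell_\Gamma)\neq\emptyset$ with $\ell_\Gamma$ real and $\ell_\Gamma\hookrightarrow\ell$.

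I expect the main obstacle to be exactly this last case analysis: keeping track of the residue field of $P$, of the constant fields of the components through $P$, and of which component witnesses membership or non-membership in $\Omega^{\mathrm{rat}}_{\mathrm{int}}(\mc{X}_s)$, while verifying in every configuration that the real point genuinely forces one of the two inequalities of \Cref{non-rat vorteces} to be strict. Everything else is a direct assembly of the results already established in the preceding sections.
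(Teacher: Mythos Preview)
Your proposal is correct and follows essentially the same route as the paper's proof: set up the chain $|\Omega^{\text{n/r}}_T(F)|\leq|\Omega^{\mathcal{P}}_T(F)\setminus\Omega^{\mathrm{rat}}_T(F)|\leq|\Omega^{\mathcal{P}}(\mc{X}_s)\setminus\Omega^{\mathrm{rat}}_{\mathrm{int}}(\mc{X}_s)|\leq\beta'+1$ for $\mathcal{P}=\text{``is nonreal''}$, then use \Cref{rational and real specialization}(b) and $\neg\mathcal{P}$-minimality of $\ell$ to force strictness and combine with \Cref{Betti-Genus}. The only organizational difference is that the paper runs the chain for \emph{both} the empty and the nonreal property and, in the subcase $\ell_\Gamma\subsetneq\ell$, gains strictness at the \emph{first} inequality of the empty-property chain (using that a component with an $\ell$-point has real function field, hence lies in $\Omega^{\text{r}}_T(F)$ but outside $\Omega^{\mathrm{rat}}_T(F)$); you instead absorb this subcase into your Case~A for the nonreal-property chain, which is a little cleaner since you only ever need strictness of the last inequality $|\Omega^{\mathcal{P}}(\mc{X}_s)\setminus\Omega^{\mathrm{rat}}_{\mathrm{int}}(\mc{X}_s)|\leq\beta'$.
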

\begin{proof} Since $F$ is real, we have that so is $K$, and by the henselian hypothesis on $T$, we have in particular that $k$ is real.
Note first that if $w \in \Omega^{n/r}_T(F)$ then in particular $w \in \Omega^{\mathcal{P}}_T(F) \setminus  \Omega^{\mathrm{rat}}_T(F)$ for the field property $\mathcal{P}=\text{``is nonreal''}$, since if the function field $\kappa_w$ had a rational place, then it could not possibly be non-real.
Hence, abbreviating $\delta_w=[\ell_w:k]$ for $w\in \Omega_T(F)$, we have that
 \begin{align*} \sum_{w \in \Omega^{\text{r}}_T(F)} \!\!\!\delta_w \mathfrak{g}(\kappa_w/k)+ \!\!\!\!\!\sum_{w \in \Omega^{\text{n/r}}_T(F)}\!\!\! 1+ \delta_w  \mathfrak{g}(\kappa_w/k) 
  &\leq \vert \Omega^{\mathcal{P}}_{T}(F) \setminus \Omega^{\mathrm{rat}}_T(F) \vert + \!\!\! \sum_{w \in \Omega_T(F)} \delta_w \mathfrak{g}(\kappa_w/k) \\
  &\leq  \vert \Omega^{\mathcal{P}}(\mc{X}_s) \setminus \Omega^\mathrm{rat}_{\mathrm{int}}(\mc{X}_s) \vert + \!\!\! \sum_{w \in \Omega_T(F)} \delta_w\mathfrak{g}(\kappa_w/k) \\
  &\leq \mathfrak{g}(F/K) + 1.
  \end{align*}
  for both the empy field property $\mathcal{P}=\text{``$1\neq 0$''}$ and the field property $\mathcal{P}=\text{``is nonreal''}$, which both satisfy going up. The first inequality is directly obvious, since for $w \in \Omega^{n/r}_T(F)$ and $\Gamma \in \Omega(\mc{X}_s)$ such that $w=v_{\Gamma}$, we obviously have that $\Gamma \in \Omega^{\mathcal{P}}(\mc{X}_s) \setminus \Omega^\mathrm{rat}_{\mathrm{int}}(\mc{X}_s)$. The second and third inequality follow from \Cref{non-rat vorteces} and \Cref{Betti-Genus}.
By \Cref{rational and real specialization} there exists a finite real extension $\ell/k$ such that  $\mathcal{X}_s(\ell) \neq \emptyset$. Let us choose $\ell/k$ of minimal degree with this property.
Let $\Gamma \subseteq \mc{X}_s$ be an irreducible component such that $\Gamma(\ell)\neq \emptyset$. In particular $\ell_\Gamma \subseteq \ell$, whereby $\ell_\Gamma$ is real.

\smallskip

If $\ell_\Gamma \subsetneq \ell$, then $v_\Gamma \in \Omega^{\text{r}}_T(F)$ and $\Gamma \notin \Omega^\mathrm{rat}_T(F)$ by the minimality assumption on $\ell$, whereby the first of the previous inequalities is obviously strict: $$\sum_{w \in \Omega^{\text{r}}_T(F)} \!\! \delta_w\mathfrak{g}(\kappa_w/k)+ \sum_{w \in \Omega^{\text{n/r}}_T(F)}\!\!\! 1+ \delta_w\mathfrak{g}(\kappa_w/k) < \vert \Omega_{T}(F) \setminus \Omega^{\mathrm{rat}}_T(F) \vert + \sum_{w \in \Omega_T(F)} \delta_w \mathfrak{g}(\kappa_w/k).$$

If $\ell_\Gamma = \ell$ and $\Gamma \notin \Omega^\mathrm{rat}_{\mathrm{int}}(\mc{X}_s)$, then $$\vert \Omega_T(F) \setminus \Omega^\mathrm{rat}_T(F) \vert < \vert \Omega(\mc{X}_s) \setminus \Omega^\mathrm{rat}_{\mathrm{int}}(\mc{X}_s) \vert.$$

Suppose now that $\ell_\Gamma = \ell$ and $\Gamma \in \Omega^\mathrm{rat}_{\mathrm{int}}(\mc{X}_s)$. Hence, there exists $\widetilde{\Gamma} \in \Omega(\mc{X}_s)$ such that 
$\kappa(P) =\ell$ for some $P \in \Gamma \cap \widetilde{\Gamma}$. In particular $v_{\wt{\Gamma}} \in \Omega^{r}_T(F)$. If $\ell_{\wt{\Gamma}} \subsetneq \ell$, then we have by  minimality of $\ell/k$ in particular that 
  $\widetilde{\Gamma} \notin\Omega^\mathrm{rat}_{\mathrm{int}} (\mc{X}_s)$, whereby, as before, we have strictness in the first inequality: $$\sum_{w \in \Omega^{\text{r}}_T(F)}\!\! \delta_w\mathfrak{g}(\kappa_w/k)+ \sum_{w \in \Omega^{\text{n/r}}_T(F)}\!\!\! 1+ \delta_w\mathfrak{g}(\kappa_w/k) < \vert \Omega_{T}(F) \setminus \Omega^{\mathrm{rat}}_T(F) \vert + \sum_{w \in \Omega_T(F)} \delta_w \mathfrak{g}(\kappa_w/k).$$
If, on the other hand, $\ell_{\wt{\Gamma}} = \ell$, then 
we have $$\vert \Omega^{\mathcal{P}}(\mc{X}_s) \setminus \Omega^\mathrm{rat}_{\mathrm{int}}(\mc{X}_s) \vert + \sum_{w \in \Omega_T(F)} \!\! [\ell_w:k] \mathfrak{g}(\kappa_w/k) < \mathfrak{g}(F/K) + 1$$
 by $(ii)$ of \Cref{non-rat vorteces} applied to the property $\mathcal{P}=\text{``is nonreal''}$, in view of \Cref{Betti-Genus}.
 In either case, we obtain a strict inequality for one of the intermediate inequalities, whereby we may replace $\mathfrak{g}(F/K) + 1$ by $\mathfrak{g}(F/K)$ on the right hand side of the first chain of weak inequalities in this proof.
\end{proof}

\section{A bound on sums of squares}\label{sumsofsquaressection}

Let $K$ be a field of characteristic different from $2$. For $m \in \mathbb{N}$, we denote by
 $\sum^m K^2$ the subset of $K$ consisting of elements that can be written as a sum of $m$ squares. We write $\sum K^2$ for the set of all elements that can be written as a finite sum of squares. The
\textit{Pythagoras number of $K$} is the smallest positive integer $m$ such that $\sum^m K^2 = \sum K^2$ if such an $m \in \mathbb{N}$ exists, and $\infty$ otherwise. We denote it by $p(K)$. The \textit{level of $K$} is the smallest positive integer $m$ such that $-1 \in  \sum^m K^2$ when $K$ is nonreal, and $\infty$ if $K$ is real. We denote it by $s(K)$. When $K$ is nonreal, we have the pythagoras-level inequality 
$$  s(K)\leq p(K)\leq s(K)+1, $$
due to the identity $x=(\frac{x+1}{2})^2-(\frac{x-1}{2})^2.$ 

We recall that $\sum^m K^2$ is multiplicatively closed when $m$ is a power of two and that $s(K)$ is always a power of two when $F$ is nonreal, as shown by A. Pfister, see \cite[Theorem 1.1, Theorem 2.2]{Lam}. For $m\in\mathbb{N},$ we denote by $\big(\sum^{2^\ell} \!\! K^2 \big)^\times\!\!$, respectively by $(\sum K^2)^\times$, the multiplicative group of nonzero sums of $2^\ell$ squares, respectively of an arbitrary finite number of squares, in $K$. We set $K^{\times 2}:=\left(\sum^{1}K^{2}\right)^{\times}.$ Considering the filtration of groups
$$ K^{\times 2} \subseteq \left(\sum^{2} K^2\right)^\times \subseteq \ldots \subseteq \left(\sum^{2^\ell} K^2\right)^\times\subseteq \ldots \subseteq \left(\sum^{\phantom{\infty}} K^2\right)^\times \subseteq K^\times,$$
the size of successive the quotient groups in this filtration gives additional information on the sum of squares structure of $K$ beyond the pythagoras number of the field. Clearly, the inclusions become stationary at the smallest $2$-power bigger or equal than the Pythagoras number of $K$.

For $\ell\in\mathbb{N}$, we let $G_{\ell}(K)=\left(\sum^{2^{\ell+1}} K^2\right)^\times/ \left(\sum^{2^{\ell}}  K^2\right)^{\times}.$ We define the \textit{$\ell^{\text{th}}$ Pfister index of $K$} as $$\rho_\ell(K):=\mathrm{log}_2 
 \left\vert G_{\ell}(K)\right\vert \in \mathbb{N} \cup \{\infty\}.$$

\begin{lem}\label{bound-henselian}
 Let $\ell\in \mathbb{N}.$ Let $K$ be a field with a discrete henselian valuation $w$ with $\mathrm{char}(\kappa_w) \neq 2$. Then
$$\rho_\ell(K) \leq \left\{ \begin{array}{rl}\rho_\ell(\kappa_w), & \text{ if $s(\kappa_{w}) \geq 2^{\ell+1}$,} \\  1+ \rho_\ell(\kappa_w), & \text{ if } s(\kappa_w)=2^{\ell}. \end{array} \right.$$ Moreover, if $\ell\geq 1$ and either $s(\kappa_{w})\leq 2^{\ell-1}$ or $s(\kappa_w)= \infty$ and $p(\kappa_w)\leq 2^\ell$ then $\rho_{\ell}(K)=0.$ 
\end{lem}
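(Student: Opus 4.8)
The plan is to control, for each $n \in \nat$, the Pfister subgroup $D_n(K) := (\sum^{2^n}K^2)^\times \leq K^\times$ in terms of the valuation $w$ and the analogous subgroup $D_n(\kappa)$ of the residue field $\kappa := \kappa_w$, and then read off the indices using that $G_\ell(K) = D_{\ell+1}(K)/D_\ell(K)$. Throughout I write $\mathcal{O},\mathfrak{m}$ for the valuation ring and maximal ideal of $w$ and fix a uniformizer $\pi$, so that $K^\times \cong \pi^{\mathbb{Z}} \times \mathcal{O}^\times$; since $w$ is henselian with $\car(\kappa)\neq 2$, Hensel's lemma gives $1 + \mathfrak{m} \subseteq K^{\times 2} \subseteq D_n(K)$ for all $n$, and the residue map identifies $\mathcal{O}^\times/(1+\mathfrak{m})$ with $\kappa^\times$.

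First I would establish two ``dictionary'' facts by a Springer-type residue analysis. \emph{(a)} For a unit $c \in \mathcal{O}^\times$ one has $c \in D_n(K)$ if and only if $\bar c \in D_n(\kappa)$, \emph{provided} $s(\kappa) \geq 2^n$ or $D_n(\kappa) = \kappa$. The implication ``$\Leftarrow$'' always holds: lift a representation $\bar c = \sum_{i=1}^{2^n}\bar a_i^2$ with some $\bar a_i \neq 0$ to $\mathcal{O}$, so $\sum \tilde a_i^2$ is a unit with residue $\bar c$ and $c/\sum \tilde a_i^2 \in 1 + \mathfrak{m} \subseteq K^{\times 2}$. For ``$\Rightarrow$'', write $c = \sum_{i=1}^{2^n}a_i^2$ and rescale by the minimal value among the $a_i$: a nontrivial relation $\sum \bar b_i^2 = 0$ in $\kappa$ would force $-1$ to be a sum of at most $2^n - 1$ squares in $\kappa$, hence $s(\kappa) \leq 2^n - 1 < 2^n$ (recall the level is a power of $2$), which is excluded; so the rescaling is trivial and $\bar c = \sum \bar a_i^2 \in D_n(\kappa)$. \emph{(b)} For $n \geq 1$, the group $D_n(K)$ contains an element of odd value if and only if $s(\kappa) \leq 2^{n-1}$: if $-1 = \sum_{i=1}^{s}\bar c_i^2$ in $\kappa$ with $s \leq 2^{n-1}$ and $\bar c_1 \neq 0$, lift with $c_1$ a unit, note $1 + \sum_{i=1}^{s}c_i^2 \in \mathfrak{m}$, and replace $c_1$ by $c_1 + \pi$ if necessary (here $\car(\kappa)\neq 2$ is used, so that $2\pi c_1$ has value exactly $1$) to arrange value exactly $1$, exhibiting an element of $D_n(K)$ of odd value as a sum of $1 + s \leq 2^n$ squares; conversely a sum of at most $2^n$ squares of odd value yields, after rescaling, a nontrivial relation $\sum \bar b_i^2 = 0$, hence $s(\kappa) \leq 2^n - 1$, hence $\leq 2^{n-1}$.

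With (a) and (b) the three cases are bookkeeping. If $s(\kappa) \geq 2^{\ell+1}$, then by (b) neither $D_{\ell+1}(K)$ nor $D_\ell(K)$ has an element of odd value, so both have value group $2\mathbb{Z}$; combining with (a) (for $n=\ell$ and $n=\ell+1$, valid since $s(\kappa)\geq 2^{\ell+1}\geq 2^\ell$) I would check that $u \mapsto \overline{u\pi^{-w(u)}}$ induces an isomorphism $D_{\ell+1}(K)/D_\ell(K) \xrightarrow{\ \sim\ } D_{\ell+1}(\kappa)/D_\ell(\kappa)$, whence $\rho_\ell(K) = \rho_\ell(\kappa)$. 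If $s(\kappa) = 2^\ell$, then $p(\kappa) \leq 2^\ell + 1 \leq 2^{\ell+1}$ gives $D_{\ell+1}(\kappa) = \kappa^\times$, so (a) yields $\mathcal{O}^\times \subseteq D_{\ell+1}(K)$ and (b) (with $n = \ell+1$, $s(\kappa) = 2^{(\ell+1)-1}$) supplies an element of value $1$, hence $D_{\ell+1}(K) = K^\times$; meanwhile $D_\ell(K)$ has value group $2\mathbb{Z}$ by (b) and $D_\ell(K) \cap \mathcal{O}^\times$ maps onto $D_\ell(\kappa)$ with kernel $1 + \mathfrak{m}$ by (a), so $|K^\times/D_\ell(K)| = 2\,|\kappa^\times/D_\ell(\kappa)| = 2\,|D_{\ell+1}(\kappa)/D_\ell(\kappa)| = 2\cdot 2^{\rho_\ell(\kappa)}$, i.e. $\rho_\ell(K) = 1 + \rho_\ell(\kappa)$. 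For the ``moreover'': if $\ell \geq 1$ and $s(\kappa) \leq 2^{\ell-1}$, then $p(\kappa) \leq 2^{\ell-1} + 1 \leq 2^\ell$ gives $D_\ell(\kappa) = \kappa^\times$, so (a) gives $\mathcal{O}^\times \subseteq D_\ell(K)$ and (b) an element of value $1$, hence $D_\ell(K) = K^\times$ and a fortiori $D_{\ell+1}(K) = K^\times$, so $\rho_\ell(K) = 0$; and if $s(\kappa) = \infty$ with $p(\kappa) \leq 2^\ell$, then $D_\ell(\kappa) = D_{\ell+1}(\kappa)$, so $\rho_\ell(\kappa) = 0$ and the first case yields $\rho_\ell(K) = \rho_\ell(\kappa) = 0$.

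The hard part will be the residue analysis underlying (a) and (b): tracking precisely how cancellation among the residues of the square summands is governed by the level of $\kappa$, and the perturbation trick producing an odd-value sum of squares (where $\car(\kappa)\neq 2$ is essential). Once (a) and (b) are in place, everything else reduces to index computations in $K^\times \cong \pi^{\mathbb{Z}} \times \mathcal{O}^\times$ with the $D_n$ being subgroups.
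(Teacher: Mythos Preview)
Your proof is correct and follows essentially the same route as the paper: both arguments reduce to the map $\sigma \mapsto (w(\sigma)\bmod 2,\ \overline{\sigma\pi^{-w(\sigma)}})$ and use a Springer-type residue analysis together with henselian lifting of sums of squares; you simply unpack this map into your dictionary facts (a) and (b) and compute indices explicitly, whereas the paper defines the homomorphism $\Phi:G_\ell(K)\to\mathbb{Z}/2\mathbb{Z}\times G_\ell(\kappa_w)$ directly and checks injectivity. As a small bonus, your argument actually yields equalities $\rho_\ell(K)=\rho_\ell(\kappa_w)$ and $\rho_\ell(K)=1+\rho_\ell(\kappa_w)$ in the two cases, not just the inequalities the lemma states.
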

\begin{proof} We assume first that $s(\kappa_{w})\geq 2^{\ell}.$ Note that $2^\ell \leq p(\kappa_{w})\leq 2^{\ell}+1\leq 2^{\ell+1}$ by the pythagoras-level inequality when $s(\kappa_{w})=2^{\ell},$ and that $w(\sigma)\in 2\mathbb{Z}$ for all $\sigma\in \left(\sum^{2^{\ell+1}} K^{2}\right)^\times$ when $s(\kappa_{w})\geq 2^{\ell+1}$ (see for example \cite[Lemma 4.1]{BGVG14}). We write $\overline{n}$ for $n+2\mathbb{Z}$ for $n \in \mathbb{Z}$ and $\overline{x}$ for the residue in $\kappa_w$ for any $x\in K$ with $w(x)=0$.
Fix any $t\in K$ with $w(t)=1$. Consider
 the group homomorphism
\begin{align*} \Phi:G_{\ell}(K) \to & \mathbb{Z}/2\mathbb{Z}\times G_{\ell}(\kappa_{w}) \\
[\sigma] \mapsto &  \left( \overline{w(\sigma)}, [\overline{\sigma t^{-w(\sigma)}}] \right)\end{align*} For $[\sigma] \in \mathrm{ker}(\Phi)$, we have that  $\overline{\sigma t^{-w(\sigma)}}\in \left(\sum^{2^{\ell}}\kappa_{w}^{2}\right)^\times,$ and hence $\sigma\in \sum^{2^{\ell}}K^{2}$, since $w$ is henselian. This shows that $\Phi$ is injective, and hence $\rho_{\ell}(K)\leq 1+\rho_{\ell}(\kappa_{w}).$ If $s(\kappa_{w})\geq 2^{\ell+1},$ then by the earlier argument, we have that $\mathrm{im}(\Phi) \subseteq \{0\} \times G_{\ell}(K)$, and hence $\rho_{\ell}(K)\leq \rho_{\ell}(\kappa_{w})$. Finally,  if $p(\kappa_{w})\leq 2^{\ell}$ then $\rho_{\ell}(\kappa_{w})=0$. If, in this situation, $s(\kappa_w)=\infty$, then by what we have already shown, we obtain $\rho_\ell(K)= \rho_\ell(\kappa_w)=0$, and if on the other hand $s(\kappa_w) \leq 2^{\ell-1}$ then $s(K)\leq 2^{\ell-1}$, and thus $p(K) \leq 2^\ell$, whereby $\rho_\ell(K)=0$.
\end{proof}

 Let $K$ be the field of fractions of a discrete valuation ring $T$ with residue field $k$ of characteristic different from $2$, and let $F/K$ be a function field. For $\ell\in\mathbb{N}$ we set $$\Omega^{\ell}_T(F)=\{w\in\Omega_T (F)\mid 2^\ell = s(\kappa_{w})\}$$
 and 
 $$\Omega^{>\ell}_T(F)=\{w\in\Omega_T (F)\mid 2^{\ell+1} \leq s(\kappa_{w})\}.$$

\begin{lem}\label{lema-cotas-rho} Let $\ell\in\mathbb{N}$  such that $\ell\geq 1.$ Let $K$ be the field of fractions of a nondyadic discrete henselian valuation ring $T$ with residue field $k$ of characteristic different from $2$. Assume that $p(k(X))\leq 2^{\ell}.$ Let $F/K$ be a function field in one variable. Then $$\rho_{\ell}(F)\leq \sum_{w\in\Omega_T^{> \ell}(F)}\rho_{\ell}(\kappa_{w})+\sum_{w\in\Omega^{\ell}_T(F)}1+\rho_{\ell}(\kappa_{w}).$$ 
\end{lem}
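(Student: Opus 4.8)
The plan is to deduce the bound from a local--global principle for sums of $2^\ell$ squares in $F$ relative to the valuations in $\Omega_T(F)$, together with the henselian local estimate of \Cref{bound-henselian}. First I would record that $G_\ell(F)$ is an elementary abelian $2$-group: it is a group because $\sum^{2^{\ell+1}}F^2$ is multiplicatively closed, and it is $2$-torsion because $\sigma^2\in F^{\times 2}\subseteq\big(\sum^{2^\ell}F^2\big)^\times$ for every $\sigma$; hence $\rho_\ell(F)=\dim_{\mathbb F_2}G_\ell(F)$. Then I would use the standard translation of the sum-of-squares condition into a Pfister form condition: for $\sigma\in F^\times$ one has $\sigma\in\big(\sum^{2^\ell}F^2\big)^\times$ if and only if the $(\ell+1)$-fold Pfister form $\pi_\sigma:=\langle\langle\sigma,-1,\dots,-1\rangle\rangle$ (with $\ell$ entries equal to $-1$, so $\dim\pi_\sigma=2^{\ell+1}$) is isotropic, equivalently hyperbolic. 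Since $\ell\geq 1$ we have $\dim\pi_\sigma\geq 4$, which is exactly the range in which local--global principles for isotropy of quadratic forms over function fields of curves over a henselian discretely valued field are well behaved; the dimension-two case $\ell=0$, whose failure is governed by the fundamental group of the reduction graph, is not treated here.

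The crux is to show that for $\sigma\in\big(\sum^{2^{\ell+1}}F^2\big)^\times$, membership $\sigma\in\big(\sum^{2^\ell}F^2\big)^\times$ holds as soon as the image of $\sigma$ in the completion $\widehat F_w$ lies in $\big(\sum^{2^\ell}\widehat F_w^2\big)^\times$ for every $w\in\Omega_T(F)$, equivalently, that $\pi_\sigma$ is isotropic over $F$ once it is isotropic over each such $\widehat F_w$. This is precisely where the remaining hypotheses are used: $T$ is henselian and nondyadic, and $p(k(X))\leq 2^\ell$; the last condition bounds the pertinent arithmetic invariants of $F$ and of the residue fields occurring on a regular model of $F$ over $T$, so that the local--global principle for isotropy holds with the valuation set reduced to $\Omega_T(F)$ (the refinement of \cite{CTPS12} announced in the introduction) rather than the a priori larger set of all discrete valuations of $F$. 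Granting this, the diagonal homomorphism
\[
G_\ell(F)\ \longrightarrow\ \prod_{w\in\Omega_T(F)}G_\ell(\widehat F_w)
\]
is injective, and therefore $\rho_\ell(F)\leq\sum_{w\in\Omega_T(F)}\rho_\ell(\widehat F_w)$.

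It then remains to estimate each local term by \Cref{bound-henselian}, applied to $\widehat F_w$, which is a discrete henselian valued field with residue field $\kappa_w$ of characteristic $\neq 2$. If $s(\kappa_w)\leq 2^{\ell-1}$ then, using $\ell\geq 1$, \Cref{bound-henselian} gives $\rho_\ell(\widehat F_w)=0$; if $s(\kappa_w)=2^\ell$, that is $w\in\Omega^{\ell}_T(F)$, it gives $\rho_\ell(\widehat F_w)\leq 1+\rho_\ell(\kappa_w)$; and if $s(\kappa_w)\geq 2^{\ell+1}$, in particular if $\kappa_w$ is formally real so that $s(\kappa_w)=\infty$, that is $w\in\Omega^{>\ell}_T(F)$, it gives $\rho_\ell(\widehat F_w)\leq\rho_\ell(\kappa_w)$. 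Since the level of a field is a power of $2$ or $\infty$, these three cases partition $\Omega_T(F)$, and summing the corresponding contributions yields exactly
\[
\rho_\ell(F)\ \leq\ \sum_{w\in\Omega^{>\ell}_T(F)}\rho_\ell(\kappa_w)\ +\ \sum_{w\in\Omega^{\ell}_T(F)}1+\rho_\ell(\kappa_w).
\]

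I expect the main obstacle to be the crux above: setting up, or invoking in a form applicable here, the local--global principle for the $(\ell+1)$-fold Pfister forms $\pi_\sigma$ over $F$ with the valuation set cut down to $\Omega_T(F)$, and verifying that $\ell\geq 1$ together with $p(k(X))\leq 2^\ell$ genuinely suffice to discard every valuation of $F$ lying outside $\Omega_T(F)$ (the $K$-trivial ones, the residually algebraic extensions of $v_T$, and the closed points of a regular model). By contrast, the elementary $2$-group bookkeeping and the case analysis via \Cref{bound-henselian} are routine.
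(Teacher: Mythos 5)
Your overall strategy — local--global plus the henselian local estimate of \Cref{bound-henselian} — is the right one and matches the paper's, but you have a genuine gap at exactly the place you flag as the crux. You ``grant'' that the local--global principle for isotropy already holds with the valuation set cut down to $\Omega_T(F)$, citing the refinement announced in the introduction; but that refinement is established only in Section~6 (via \Cref{point-val-equivalence} and \Cref{point isotropy criterion}), and there the base ring $T$ is assumed \emph{complete} with residue characteristic \emph{zero}, neither of which is assumed in \Cref{lema-cotas-rho} (which only asks for henselian, nondyadic $T$ of residue characteristic $\neq 2$). So you cannot simply invoke that refinement here, and you do not supply an independent argument for it.

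The paper's proof avoids this altogether by applying the local--global principle of \cite[Theorem~3.1]{CTPS12} (relaxed from complete to henselian $T$ by \cite[Theorem~4.4]{BDGMZ}) over the \emph{full} set $\Omega(F)$ of discrete valuations of $F$, obtaining $\rho_\ell(F)\leq\sum_{w\in\Omega(F)}\rho_\ell(F^w)$. It then discards everything outside $\Omega_T(F)$ by showing the local indices vanish there: for $K$-trivial $w$ (so $\kappa_w$ is a function field over $K$), \cite[Theorem~3.5]{BVG} together with $p(k(X))\leq 2^\ell$ gives $s(\kappa_w)\leq 2^{\ell-1}$ in the nonreal case and $p(\kappa_w)<2^\ell$ in the real case, whence $\rho_\ell(F^w)=0$ by \Cref{bound-henselian}; for $w$ with $w(K)\neq 0$ and $\kappa_w$ algebraic over $k$, the same BVG theorem applied along a direct limit of finite extensions of $k$ gives $p(\kappa_w)<2^\ell$ and again $\rho_\ell(F^w)=0$. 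Also note a small conflation in your write-up: the hypothesis $p(k(X))\leq 2^\ell$ is used to kill the superfluous local terms, not to make the restricted local--global principle hold; those are independent matters. Finally, closed points of a model are not discrete valuations of $F$, so they simply do not appear in the valuation-theoretic product $\prod_{w\in\Omega(F)}G_\ell(F^w)$; you need not ``discard'' them. With the crux replaced by the paper's elimination argument, the rest of your write-up (the $\mathbb F_2$-vector-space bookkeeping, the partition by $s(\kappa_w)$, the application of \Cref{bound-henselian}) is correct.
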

\begin{proof} We recall the local-global principle for isotropy of quadratic forms at least $3$ variables over $F$ with respect to all discrete valuations in $F$ in case that $T$ is complete from \cite[Theorem 3.1]{CTPS12}. The completeness condition can be relaxed to the henselian condition, see \cite[Theorem 4.4]{BDGMZ}.
For a discrete valuation $w$ on $F,$ we denote by $F^{w}$ its completion with respect to $w$. Let $\Omega(F)$ denote the set of all discrete valuations on $F$. Applied to our situation, the local-global principle implies the injectivity of the natural homomorphism $$G_{\ell}(F)\hookrightarrow \prod_{w\in\Omega(F)}G_{\ell}(F^{w}),$$
and thus 
$\rho_\ell(F) \leq \sum_{w \in \Omega(F)} \rho_\ell(F^w)$,
since if $f \in F$ is any sum of $2^{\ell+1}$ squares that becomes locally a sum of $2^\ell$ squares, then
the quadratic form $X^2_1 + \ldots + X^2_{2^\ell} - f Y^2$
is certainly isotropic over $F$ since $\ell \geq 1$, by the local-global principle, and thus $f$ is already a sum of $2^\ell$ squares in $F$.
Observe that by \cite[Theorem 3.5]{BVG}, since $p(k(X))\leq 2^\ell$, we have for any $w\in \Omega(F)$ such that $w(K)=0$ that $s(\kappa_w) \leq 2^{\ell-1}$ if  $\kappa_w$ is nonreal, as well as $p(\kappa_w)< 2^\ell$ if $\kappa_w$ is real. In particular $\rho_\ell(F^w)=0$, by \Cref{bound-henselian}. If $w(K) \neq 0$, then necessarily $\mathcal{O}_w \cap K=T$. Moreover, if $\kappa_w$ is algebraic over $k$, then $p(\kappa_w) < 2^\ell$, again by \cite[Theorem 3.5]{BVG} applied to a description of $\kappa_w$ as a direct limit of finite extensions of $k$.
Thus we only need to take into consideration residually transcendental valuation extensions of $T$:
$$\rho_\ell(F) \leq \sum_{w \in \Omega_T(F)} \rho_\ell(F^w)\leq \sum_{w \in \Omega^{>\ell}_T(F)} \rho_\ell(\kappa_w) + \sum_{w \in \Omega^\ell_T(F)} 1 + \rho_\ell(\kappa_w).$$
\end{proof}

Before stating and proving our main theorem in this section, 
we need to define some technical notions for valuations on $K$, which in the most important example situation  $K=\mathbb{R}(\!(t_1)\!)\ldots(\!(t_{n})\!)$ are just straight forward.

We say that a valuation  $v: K^\times \to (\Gamma,\leq)$ (without loss of generality $\Gamma=v(K^\times)$) is an \textit{$n$-discrete valuation on $K$} if $(\Gamma,\leq)$ is isomorphic as an ordered abelian group to the lexicographically ordered group $(\mathbb{Z}^{n},\leq_{\mathsf{lex}})$. 
For simplicity, let us identify $\Gamma= \mathbb{Z}^n$. Letting $\pi: \mathbb{Z}^n \to \mathbb{Z}$ denote the projection onto the lexicographically dominant component, we call the valuation $\pi \circ v: K^\times \to \mathbb{Z}$ the \textit{$1$-discrete coarsening of $v$}, and we denote it by $v_1$. 
On $\kappa_{v_1}$ we now have naturally defined an $(n-1)$-discrete valuation 
\begin{align*} 
\overline{v}: \kappa_{v_1}^\times  & \to \mathbb{Z}^{n} \\
                        \overline{x} & \mapsto v(x)
\end{align*}
that we call \text{the residual valuation of $v$ modulo its $1$-discrete coarsening}.
It is an easy exercise left to the reader to see that the dominant component of $v(x)$ is zero, and that indeed $v(x)$ does not depend on the choice of representant of $\overline{x}$.  We have that  that $\kappa_{v}=\kappa_{\overline{v}}$, see  \cite[Section 8]{EP05} . If $v$ is henselian, then so is $v_1$ and $\overline{v}$, see \cite[Corollary 4.1.4]{EP05}.  
One can also do this in reverse: Given a $1$-discrete valuation $v: K^\times \to \mathbb{Z}$ and an $n-1$-discrete valuation $w: \kappa_{v} \to \mathbb{Z}^{n-1}$, then we can define the composite valuation with respect to uniformizer $\pi$ of $v$ 
\begin{align*} 
w \circ v: K^\times  &\to \mathbb{Z} \times \mathbb{Z}^{n-1}  \\
    x &\mapsto (v(x), w(\overline{ x \pi^{-v(x)}} ) ).
\end{align*}  
We leave it again to the reader to verify that this is an $n$-discrete valuation, and that a different choice of uniformizer does not change the equivalence class of valuation that we obtain.

\begin{thm}\label{sumsofsquaresbound} Suppose that $K$ carries a henselian $n$-discrete valuation for some $n\in \mathbb{N}$ with residue field $k$ of characteristic zero. Suppose that  $p(E)\leq 2^{\ell}$ for every function field in one variable $E/k$ for some integer $\ell\geq 1$.  
Then $p(K(X)) \leq 2^\ell$, and for any function field $F/K$ in one variable, we have $p(F) \leq 2^\ell + 1$ with  $$\rho_{\ell}(F) \leq n\cdot (\mathfrak{g}(F/K)+1),$$ if $F$ is nonreal, and  $$\rho_{\ell}(F) \leq n\cdot \mathfrak{g}(F/K),$$
if $F$ is real.
\end{thm}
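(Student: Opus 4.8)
The plan is to argue by induction on $n$. For $n=0$ the valuation is trivial, so $K=k$, and the hypothesis directly gives $p(F)\le 2^\ell$ for every one-variable function field $F/K$ (in particular for $K(X)$), whence $\rho_\ell(F)=0$ and both asserted bounds hold trivially since $\mathfrak{g}(F/K)\ge 0$. For the inductive step assume $n\ge 1$, and let $v$ be the given henselian $n$-discrete valuation on $K$ with residue field $k$ of characteristic zero. Let $v_1$ be the $1$-discrete coarsening of $v$, put $T:=\mathcal{O}_{v_1}$, and let $k_1:=\kappa_{v_1}$ be its residue field; then $\operatorname{char}(k_1)=0$, and $k_1$ carries the residual $(n-1)$-discrete valuation $\overline v$, which is again henselian and has residue field $k$. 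Since every one-variable function field over $k$, or over a finite extension of $k$, is again of the type allowed by the hypothesis, the induction hypothesis applies to $k_1$ and yields $p(k_1(X))\le 2^\ell$ together with, for every one-variable function field $E/k_1$, the bounds $p(E)\le 2^\ell+1$ and $\rho_\ell(E)\le (n-1)(\mathfrak{g}(E/k_1)+1)$ if $E$ is nonreal, respectively $\rho_\ell(E)\le (n-1)\mathfrak{g}(E/k_1)$ if $E$ is real.

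Now fix a one-variable function field $F/K$. Replacing $K$ by its relative algebraic closure in $F$ — which still carries a henselian $n$-discrete valuation, with residue field a finite extension of $k$, and does not change $\mathfrak{g}(F/K)$ — we may assume $K$ is relatively algebraically closed in $F$. Since $p(k_1(X))\le 2^\ell$ and $T$ is a nondyadic discrete henselian valuation ring, \Cref{lema-cotas-rho} applies and gives
\[\rho_\ell(F)\ \le\ \sum_{w\in\Omega^{>\ell}_T(F)}\rho_\ell(\kappa_w)\ +\ \sum_{w\in\Omega^{\ell}_T(F)}\bigl(1+\rho_\ell(\kappa_w)\bigr).\]
Every valuation $w$ contributing to the right-hand side has $\kappa_w$ nonreal with $s(\kappa_w)\ge 2^\ell$, and $\kappa_w$ is a one-variable function field over $k_1$ by \cite[Proposition 3.1]{BGVG14}. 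I claim that for each such $w$ the relative algebraic closure $\ell_w$ of $k_1$ in $\kappa_w$ is real: were it nonreal, then, being an algebraic extension of $k_1$ and $p(k_1(X))\le 2^\ell$, it would satisfy $s(\ell_w)\le 2^{\ell-1}$ by \cite[Theorem 3.5]{BVG}, forcing $s(\kappa_w)\le s(\ell_w)<2^\ell$, a contradiction. Hence every contributing $w$ lies in $\Omega^{\text{n/r}}_T(F)$, and applying the induction hypothesis to the nonreal function field $\kappa_w$ over $\ell_w$ (which carries a henselian $(n-1)$-discrete valuation with residue field finite over $k$) gives $\rho_\ell(\kappa_w)\le (n-1)(\mathfrak{g}(\kappa_w/k_1)+1)$, using that the geometric genus is unchanged by the algebraic constant-field extension $\ell_w/k_1$.

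Combining, in either case ($w\in\Omega^{>\ell}_T(F)$ or $w\in\Omega^{\ell}_T(F)$) the contribution of $w$ is at most $n\bigl(1+\mathfrak{g}(\kappa_w/k_1)\bigr)$, since $(n-1)(\mathfrak{g}+1)+1\le n(\mathfrak{g}+1)$. Therefore
\[\rho_\ell(F)\ \le\ n\sum_{w\in\Omega^{\text{n/r}}_T(F)}\bigl(1+\mathfrak{g}(\kappa_w/k_1)\bigr)\ \le\ n\sum_{w\in\Omega^{\text{n/r}}_T(F)}\bigl(1+[\ell_w:k_1]\,\mathfrak{g}(\kappa_w/k_1)\bigr),\]
and the last sum is $\le \mathfrak{g}(F/K)+1$ when $F$ is nonreal by \Cref{nonreal-genus-inequality} applied with $T$, and $\le\mathfrak{g}(F/K)$ when $F$ is real by \Cref{real-genus-inequality} (whose left-hand side additionally carries the nonnegative term $\sum_{w\in\Omega^{\text{r}}_T(F)}[\ell_w:k_1]\mathfrak{g}(\kappa_w/k_1)$); note $T$ is henselian, as required there. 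This establishes the two genus bounds. The bound $p(K(X))\le 2^\ell$ follows by running the same argument for $F=K(X)$ and every exponent $m\ge\ell$: the residually transcendental residue fields are then rational over finite extensions of $k_1$, hence of genus $0$ with $s(\kappa_w)=s(\ell_w)$, so no $w$ survives (a surviving $w$ would need $\ell_w$ real with $s(\ell_w)\ge 2^m$, impossible), whence $\rho_m(K(X))=0$ for all $m\ge\ell$ and the filtration of sums of squares of $K(X)$ stabilizes already at $2^\ell$ squares. Finally $p(F)\le 2^\ell+1$ follows from \cite[Theorem 3.5]{BVG} (which gives $s(F)\le 2^\ell$ when $F$ is nonreal and $p(F)\le 2^\ell$ when $F$ is real) together with the Pythagoras–level inequality in the nonreal case.

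The step I expect to be the main obstacle is the identification, carried out above by means of \cite[Theorem 3.5]{BVG}, of the valuations surviving the bound from \Cref{lema-cotas-rho} as precisely those in $\Omega^{\text{n/r}}_T(F)$ — equivalently, the assertion that a residually transcendental extension $w$ with $s(\kappa_w)\ge 2^\ell$ cannot have nonreal constant field $\ell_w$ — since this is exactly what lets the genus inequalities be fed back in. Once this matching is secured, what remains is the elementary bookkeeping of aligning the ``$+1$''s and the multiplicities $[\ell_w:k_1]$ so that the final constant is exactly $n$, plus the comparatively routine deduction of the two Pythagoras-number statements for $K$ from the corresponding statements over $k_1$.
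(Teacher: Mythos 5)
Your overall strategy—induction on $n$ via the $1$-discrete coarsening $v_1$, then \Cref{lema-cotas-rho} combined with the genus inequalities—matches the paper's. However there is a genuine error in the middle that undermines the real case.

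You claim that ``every valuation $w$ contributing to the right-hand side has $\kappa_w$ nonreal with $s(\kappa_w)\geq 2^\ell$'' and hence lies in $\Omega^{\text{n/r}}_T(F)$. This is false. By definition $\Omega^{>\ell}_T(F)=\{w\mid s(\kappa_w)\geq 2^{\ell+1}\}$, and since real fields have level $\infty$, this set \emph{contains} $\Omega^{\text{r}}_T(F)$. Those valuations have $\kappa_w$ real, hence do not lie in $\Omega^{\text{n/r}}_T(F)$, yet they contribute $\rho_\ell(\kappa_w)\leq (n-1)\mathfrak{g}(\kappa_w/k_1)$, which is in general nonzero. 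Consequently your inequality
$\rho_\ell(F)\leq n\sum_{w\in\Omega^{\text{n/r}}_T(F)}\bigl(1+\mathfrak{g}(\kappa_w/k_1)\bigr)$
drops these terms and does not follow. In the nonreal case the gap is invisible because $\Omega^{\text{r}}_T(F)=\emptyset$ (nonreality satisfies going-up), but the real case is precisely where \Cref{real-genus-inequality} carries the extra sum $\sum_{w\in\Omega^{\text{r}}_T(F)}[\ell_w:k]\mathfrak{g}(\kappa_w/k)$ on its left side for a reason. The paper's route is to keep both sums, bound
$\rho_\ell(F)\leq \sum_{w\in\Omega^{\text{r}}_T(F)}(n-1)\mathfrak{g}(\kappa_w/k_1)+\sum_{w\in\Omega^{\text{n/r}}_T(F)}1+(n-1)\bigl(\mathfrak{g}(\kappa_w/k_1)+1\bigr)$,
and then apply the genus inequality twice: once to the $(n-1)$-weighted combination (giving $(n-1)\mathfrak{g}^*(F/K)$) and once to bound $\lvert\Omega^{\text{n/r}}_T(F)\rvert\leq\mathfrak{g}^*(F/K)$. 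Your proof needs this two-sum bookkeeping to be correct; as written the derivation fails whenever $\Omega^{\text{r}}_T(F)\neq\emptyset$.

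Two smaller remarks. Your derivation of $p(K(X))\leq 2^\ell$ via showing $\rho_m(K(X))=0$ for $m\geq\ell$ is a genuine alternative to the paper's citation of \cite[Theorem 6.10]{BGVG14}, but the parenthetical ``impossible'' is misplaced: for real $\ell_w$ one has $s(\ell_w)=\infty\geq 2^m$, which is certainly possible; the point is rather that such $w$ lands in $\Omega^{>m}_T(K(X))$ with $\kappa_w=\ell_w(t)$ rational and $p(\kappa_w)\leq 2^\ell\leq 2^m$, so $\rho_m(\kappa_w)=0$, while $\Omega^{m}_T(K(X))=\emptyset$ because $s(\kappa_w)=s(\ell_w)$ can never equal $2^m$. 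Finally, for $p(F)\leq 2^\ell+1$ the paper cites \cite[Corollary 1.5]{BDGMZ} rather than \cite[Theorem 3.5]{BVG}; you should verify that the latter as you invoke it indeed covers the henselian (not just complete) case.
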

\begin{proof} The fact that $p(F) \leq 2^\ell + 1 $  for any function field $F/K$ was shown in \cite[Corollary 1.5]{BDGMZ}. 
We show  the remaining affirmation by induction on $n.$ If $n=0,$ then $k=K,$ so that by hypothesis $p(F)\leq 2^{\ell},$ that is $\rho_{\ell}(F)=0$ for every function field in one variable $F/K$. Now suppose that $n\geq 1$. Let $v$ be the henselian $n$-discrete valuation on $K$ with the properties given in the statement. Let $v_1$ be its $1$-discrete coarsening and let $T\subseteq K$ denote the discrete valuation ring of $v_1$ and $k_1$ its residue field. Denote by $\overline{v}$ the residual valuation of $v$ modulo $v_1$ on $k_1$. Since $\overline{v}$ is a $(n-1)$-discrete henselian valuation on $k_1$ with residue field $\kappa_{\overline{v}}=k$, we may apply the induction hypothesis to any function field in one variable over $k_1$. 
Let $F/K$ be a function field in one variable of genus $g$. Since the genus of a function field is defined with respect to the relative algebraic closure of the base field, we may assume without loss of generality that $K$ is algebraically closed in $F$, as any $n$-discrete henselian valuation  extends uniquely  to an $n$-discrete henselian valuation in a finite extension.   
We first observe that under the conditions of the theorem, we have $\Omega^{r}_T(F) \subseteq \Omega^{>\ell}_T(F)$
and 
$$ \left(\Omega^{>\ell}_T(F) \cup \Omega^\ell_T(F)\right) \subseteq \left(\Omega^{r}_T(F) \cup \Omega^{n/r}_T(F) \right),
$$
since the residue field $\kappa_w$ of any residually transcendental valuation extension $w$ of $T$ on $F$ with nonreal relative algebraic closure of $k_1$ in $\kappa_w$  has level at most $2^{\ell-1}$ by \cite[Theorem 3.5]{BVG}, since $p(k_1(X)) \leq 2^\ell$ by induction hypothesis. Moreover we conclude with  
\cite[Theorem 6.10]{BGVG14} that $p(K(X))\leq 2^{\ell}$ from this induction hypothesis. 

 Now, by \Cref{lema-cotas-rho}, the above mentioned inclusions of valuation sets, the induction hypotheses for $\rho_\ell$ for function fields over $k_1$, and either \Cref{nonreal-genus-inequality} or \Cref{real-genus-inequality} we obtain   
\begin{align*} \rho_{\ell}(F) &\leq \sum_{w\in\Omega_T^{> \ell}(F)}\!\!\rho_{\ell}(\kappa_{w})+\!\!\!\!\!\sum_{w\in\Omega^{\ell}_T(F)} \!\!1+\rho_{\ell}(\kappa_{w}) \, \leq \, \sum_{w\in\Omega_T^{\text{r}}(F)} \rho_{\ell}(\kappa_{w}) + \sum_{w\in\Omega_T^{\text{n/r}}(F)} 1+ \rho_{\ell}(\kappa_{w}) \\ 
&\leq \sum_{w\in\Omega_T^{\text{r}}(F)} \mathfrak{g}(\kappa_{w}/k_1)(n-1) +\sum_{w\in\Omega_T^{\text{n/r}}(F)} 1 +  (\mathfrak{g}(\kappa_{w}/k_1)+1)(n-1) \\ &\leq (n-1) \mathfrak{g}^{*}(F/K)  + \vert \Omega_T^{\text{n/r}}(F) \vert \;\; \leq \;\; n\cdot \mathfrak{g}^{*}(F/K),
\end{align*} 
where $\mathfrak{g}^{*}(F/K):= \mathfrak{g}(F/K)+ 1$ when $F$ is nonreal, and $\mathfrak{g}^{*}(F/K):= \mathfrak{g}(F/K)$ when $F$ is real, to  keep the inequalities visually readable. Note that we applied \Cref{nonreal-genus-inequality}, respectively \Cref{real-genus-inequality} in the penultimate, as well as the ultimate inequality. 
\end{proof}
%Now assume that $r\geq 3$ and $\rho_{r}(F)=n\cdot (g+1).$ Note that, by the above $F$ is nonreal, and hence $\rho_{r}(F)\leq \sum_{w\in\mathcal{X}_{r}(F/v')}1+\rho_{r}(\kappa_{w}),$ by \Cref{remark-X}. Therefore $\mathcal{X}_{r}(F/v')\neq \emptyset.$ Let $w\in\mathcal{X}_{r}(F/v')$. We claim that $p(\ell_{w})\leq 2^{r-2}.$ Let $\overline{v}$ be the residual valuation of $v$ modulo $v'.$ Since $\kappa_{v}=\kappa_{\overline{v}},$ we have that $p(\kappa_{\overline{v}})\leq 2^{r-2}.$ Let $\nu$ be an extension of $\overline{v}$ to $\ell_{w}.$ Then $\nu$ is a henselian valuation on $\ell_{w}$ and $\kappa_{\nu}/\kappa_{\overline{v}}$ is finite. Thus $p(\kappa_{\nu})\leq 2^{r-2},$ and hence $p(\ell_{w})\leq 2^{r-2},$ by \cite[Proposition 2.1]{BDGMZ}. Now, by \Cref{optimality-genus-zero} we have that $\mathfrak{g}(\kappa_{w}/\kappa_{v'})=0.$ Since $\kappa_{w}/\ell_{w}$ is a regular function field of genus zero with $p(\ell_{w})\leq 2^{r-2},$ it follows by \Cref{genus-zero} that $s(\kappa_{w})\leq 2^{r-2},$ which is a contradiction, because $w\in\mathcal{X}_{r}(F/v').$ This shows that $\rho_{r}(F)<n\cdot (g+1)$ for all $r\geq 3.$

\begin{ex}\label{mainsostheoremapplied} Let $n,\ell\in\mathbb{N}$ with $\ell \geq 1.$ It is well known that the pythagoras number of any function field in one variable over $\mathbb{R}(X_{1},\ldots,X_{\ell-1})$ is at most $2^\ell$,  see \cite[Example 1.4 (4), chap. 7]{Pf95}. Hence, the field $K=\mathbb{R}(X_{1},\ldots,X_{\ell-1})(\!(t_{1})\!)\ldots(\!(t_{n})\!)$ satisfies the hypothesis of \Cref{sumsofsquaresbound} and we obtain 
$$\rho_{\ell}(F)\leq n\cdot (\mathfrak{g}(F/K)+1)$$ for any function field $F/K$ in one variable, and 
$\rho_{\ell}(F)\leq n \cdot \mathfrak{g}(F/K)$ when $F$ is real. The case $\ell=1$ is the case discussed in the introduction.
\end{ex}

%Assume that $r\geq 3$ and $\rho_{r}(F)=n(g+1).$ Then $f$ has $g+1$ nonreal irreducible factors of degree $2.$ Moreover $F$ is nonreal, by \Cref{sumsofsquaresbound}. \textcolor{red}{Note that $-f\in\sum K(X)^{2},$ by \cite[Lemma 1.4, chap. VIII]{Lam}. We set $f=-\alpha\cdot f_{1}\cdots f_{g+1},$ for some $\alpha\in \sum K^{2},$ and $f_{1},\ldots,f_{g+1}\in K[X].$ Since $p(K)\leq 2^{r-2},$ we have $\alpha\in \sum^{2^{r-2}}K^{2}.$ Let $1\leq i\leq g+1.$ We claim that $-f\in \sum^{2^{r-2}+1}K(X)^{2}.$ Let $a,b\in K$ such that $f=X^{2}+aX+b.$ Since $f_{i}=(X+a/2)^{2}+(b-a^{2}/4),$ and $K_{i}=K(\sqrt{a^{2}/4-b}),$ we have that $b-a^{2}/4\in \sum^{2^{r-2}}K^{2},$ by \cite[Lemma 1.4, chap. VIII]{Lam}, because $K_{i}$ is nonreal. Hence} 

The optimality of the bound $\rho_1(F) \leq n\cdot (g+1)$ from \Cref{sumsofsquaresbound} in the nonreal case is a consequence of \cite[Example 4.5]{GM24}:  For $g\in\mathbb{N},$ the curve $$Y^{2}=-\prod_{i=0}^{g}(X^{2}+t_{n}^{2i})$$ over $\mathbb{R}(\!(t_{1})\!)\ldots(\!(t_{n})\!)$ has genus $g$, its function field is nonreal, and it was shown that $\rho_{1}(F)=n(g+1).$ The following example shows optimality of the bound $\rho_{1}(F) \leq n\cdot g$ in the case where $F$ is real:

\begin{ex}\label{real optimal example} Let $n,g\in\mathbb{N}.$ Let $F$ be the function field of the curve $$Y^{2}=(X-1)\displaystyle\prod_{i=1}^{g}(X^{2}+t_{n}^{2i}),$$ over $K=\mathbb{R}(\!(t_{1})\!)\ldots(\!(t_{n})\!).$ This curve has genus $g$ and $F$ is real. We will show that $\rho_{1}(F)=ng.$ 
 Let $v_n$ be the unique discrete valuation on $K$ (the $t_n$-adic one). Its residue field is $k=\mathbb{R}(\!(t_1)\!)\ldots (\!(t_{n-1})\!)$. For $1\leq j\leq g,$ let $w'_{j}$ be the so called Gauss extension of $v_n$ with respect to $Z=Xt_{n}^{-j},$ and let $w_{j}$ denote an extension of $w'_{j}$ to $F$. Note that $K(X)=K(Z).$ Since $F=K(Z)(\sqrt{g(Z)})$ with $$g(Z)=(t_{n}^{j}Z-1)(t_{n}^{2j-2}Z^{2}+1)\cdots(t^2Z^{2}+1)\cdot(Z^{2}+1)\cdot(Z^{2}+t^2)\cdots(Z^{2}+t_{n}^{2(g-j)}) \in \mathcal{O}_{w'_j},$$ 
we obtain that $$\kappa_{w_{j}}=\kappa_{v_n}(\overline{Z})\left(\sqrt{-\overline{g(Z)}}\right)\simeq \mathbb{R}(\!(t_1)\!)\ldots(\!(t_{n-1})\!)(X)\left(\sqrt{-(X^{2}+1)}\right).$$
Thus $s(\kappa_{w_{j}})=2.$ Since $k$ admits a henselian $n-1$-discrete valuation and $\kappa_{w_{j}}/k$ is a function field of genus zero, we have by \cite[Lemma 4.4]{GM24}  that $\rho_{1}(\kappa_{w_{j}})=n-1.$   By \cite[Theorem 3.6]{GM24} , we conclude that there must be $n-1$ valuations on $\kappa_{w_{j}}$
that are  $d$-discrete  for some $1\leq d\leq n-1$ and whose residue fields have level $2$. After composition with the respective $w_j$, we obtain for each $j \leq g$ a set of
 $n-1$ valuations on $F$ which are $d$-discrete for some  $2 \leq d\leq n$,  and since the composition does not change the residue field by \cite[Remark 2.3]{GM24}, they are of level $2$.
Counting additionally each $w_j$ as a $1$-discrete valuation, we obtain this way in total $gn$ distinct (nonequivalent) valuations on $F$ that are $d$-discrete for some $1 \leq d\leq n$  and with residue field of level $2$. Hence, \cite[Theorem 3.6]{GM24}  shows that $\rho_{1}(F)\geq ng$.
Of course, we have equality by \Cref{sumsofsquaresbound}.

%By \cite[Corollary 3.7]{GM24}, the cardinality of $n$- discrete henselian valuation that $\rho_{1}(F)=\sum_{i=1}^{g}1+\rho_{1}(\kappa_{w_{i}}).$ Let $v$ be a henselian $n$-discrete valuation on $K_{n}.$ Note that $\kappa_{v}=\mathbb{R}.$ Let $\overline{v}$ be the residual valuation of $v$ modulo $v'.$ Then $\overline{v}$ is a henselian $(n-1)$-discrete valuation on $\kappa_{v'}$ with $\kappa_{v}=\kappa_{\overline{v}}.$ Hence $\rho_{1}(\kappa_{w_{i}})=n-1,$ by \cite[Lemma 4.4]{GM24}, for all $1\leq i\leq g.$ Therefore $\rho_{1}(F)=g+g(n-1)=ng.$ 
\end{ex}

\begin{qu} Let $K=\mathbb{R}(X)(\!(t_1)\!)\ldots(\!(t_{n})\!)$. Is there a nonreal function field in one variable $F/K$ such that $\rho_2(F)=n\cdot (\mathfrak{g}(F/K) +1)$ ? 
Is there a real function field $F/K$ such that $\rho_2(F)=n\cdot\mathfrak{g}(F/K)$ ? If not, what is the correct formula for the optimal bound on $\rho_2$ ?
\end{qu}

We end this section by analyzing why the genus inequality from \cite{BG24} would not have sufficed to prove the bound $\rho_1(F) \leq n \cdot (\mathfrak{g}(F/K)+1)$. In our notation, the genus inequality shown in \cite{BG24} is 
$$\vert \Omega^0_{T}(F) \setminus \Omega^{\mathrm{rat}}_T(F) \vert + \sum_{w \in \Omega_T(F)} [\ell_w:k] \mathfrak{g}(\kappa_w/k)  \, \leq \, \mathfrak{g}(F/K) +1,$$
where $\Omega^0_{T}(F)=\{w \in \Omega_{T}(F)\mid \mathfrak{g}(\kappa_w/k)=0\}$, and a condition for when equality can occur is also given, which we do not repeat here, but suffice to say that this condition is taken into account in the following:

\smallskip

Let $K=\mathbb{R}(\!(t_1)\!)(\!(t_{2})\!)$.  Let us consider the situation where $F/K$ is a nonreal function field of genus $3$. Note that, in difference to the almost similar looking \Cref{nonreal-genus-inequality}, the above mentioned genus inequality does not exclude the possibility of the existence of $3$ residually transcendental valuation extensions $w_1,w_2$ and $w_3$ of the $t_2$-adic valuations on $K$, such that all $\kappa_{w_i}$ have genus $1$ over $\mathbb{R}(\!(t_1)\!)$ and the latter is relative algebraically closed in $\kappa_{w_i}$.    
Again, for each of  $\kappa_{w_i}/\mathbb{R}(\!(t_1)\!)$, there is no obstruction coming from the genus inequality above that would prevent the existence of two residually transcendental valuation extensions $v_{i,1}$ and $v_{i,2}$ of the $t_1$-adic one to $\kappa_{w_i}$, such that $\kappa_{v_{i,j}}$ are of genus zero over $\mathbb{R}$ and such that $\mathbb{R}$ is relatively algebraically closed in $\kappa_{v_{i,j}}$. In particular $\kappa_{v_{i,j}}\simeq \mathbb{R}(X)(\sqrt{-(X^2+1)})$, which has level $2$. This would force the existence of in total $6$ distinct $2$-discrete valuations with residue field of level $2$ on $F$. By \cite[Lemma 4.4]{GM24}, this implies that $\rho_1(F) \geq  2 \cdot 6= 12$. But we know from our previous result that $\rho_1(F) \leq 2 (\mathfrak{g}(F/K)+1)= 8$. 

\section{A bound on Local squares}
In this section, we assume that $K$ is the field of fractions of a complete non-dyadic discrete valuation ring $T$. Let $k$ denote its residue field. 
Let $F/K$ be a function field in one variable.  
Let $\mc{L}(F)$ denote the subgroup of $F^\times$ consisting of elements that are squares in the completion  $F^w$ for every $\zz$-valuation $w$ on $F$.

\smallskip
\noindent 
Let $\Sh(F)$ denote the kernel of the product restriction morphism of Witt groups
$$W(F) \to \prod_{w \in \Omega(F)} W(F^w).$$
We refer to \cite{Lam} for details about the construction and properties of Witt groups of quadratic forms over fields.

\begin{lem}\label{radicalker} 
$\faktor{\mathcal{L}(F)}{F^{\times 2}}\simeq \Sh(F).$
\end{lem}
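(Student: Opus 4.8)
The plan is to exhibit an explicit isomorphism between $\mathcal{L}(F)/F^{\times 2}$ and $\Sh(F)$ via the one-dimensional Pfister form construction. First I would recall that for $a \in F^\times$, the class of the binary form $\langle 1, -a\rangle$ in $W(F)$ lies in $\Sh(F)$ if and only if $\langle 1,-a\rangle$ becomes hyperbolic over $F^w$ for every $w \in \Omega_T(F)$, which by Witt cancellation happens precisely when $a \in (F^w)^{\times 2}$ for every such $w$, i.e. exactly when $a \in \mathcal{L}(F)$. This gives a well-defined group homomorphism $\mathcal{L}(F) \to \Sh(F)$, $a \mapsto \langle 1, -a \rangle$ (using that $\langle 1,-a\rangle \otimes \langle 1,-b\rangle \equiv \langle 1,-ab\rangle$ modulo hyperbolic forms, so the map respects the group structures), whose kernel is $F^{\times 2}$. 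Thus it factors through an injection $\mathcal{L}(F)/F^{\times 2} \hookrightarrow \Sh(F)$.

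The substantive point is surjectivity: I must show every class in $\Sh(F)$ is represented by a binary form $\langle 1, -a\rangle$ with $a \in \mathcal{L}(F)$. Here is where the quadratic-form local-global machinery enters. Let $q$ be an anisotropic form representing a class in $\Sh(F)$. Being in the kernel means $q$ becomes hyperbolic, in particular isotropic, over each $F^w$ for $w \in \Omega_T(F)$. If $\dim q \geq 3$, then by \Cref{point isotropy criterion} together with \Cref{point-val-equivalence} (which transfers anisotropy over the $F_x$ to anisotropy over some $F^w$), $q$ would already be isotropic over $F$, contradicting anisotropy. Hence $\dim q \leq 2$. A class in the kernel also has even dimension (the rank modulo $2$ is detected already over $F$, or over any single residually transcendental completion), so $\dim q \in \{0, 2\}$. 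If $\dim q = 0$ the class is trivial; if $\dim q = 2$, then after scaling, $q \cong \langle b, -ab\rangle \cong b\langle 1, -a\rangle$ for some $a, b \in F^\times$, and since $q$ becomes hyperbolic over every $F^w$, so does $\langle 1,-a\rangle$, giving $a \in \mathcal{L}(F)$ as above. This shows the map is onto.

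The main obstacle I anticipate is making the reduction "$\dim q \geq 3 \Rightarrow q$ isotropic over $F$" fully rigorous with respect to the valuation set $\Omega_T(F)$ rather than all discrete valuations of $F$: one has to carefully chain \Cref{point isotropy criterion}, which is stated for the points $x \in \mc{X}_s$ of a regular model, with \Cref{point-val-equivalence}, which requires $\mc{X}$ to be a regular model on which the divisor $D = \mc{X}_s + \mathrm{supp}(a_1) + \cdots + \mathrm{supp}(a_n)$ has normal crossings, and such a model exists by embedded resolution in residue characteristic zero. A secondary technical care point is the parity/dimension argument showing a kernel class has even dimension, which follows since the first residue (rank mod $2$) of a Witt class is unchanged under field extension and $W(F) \to W(F^w)$ is injective on this invariant; alternatively one invokes that $\Sh(F)$ consists of torsion classes lying in $I(F)$. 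Once these are in place, the two inclusions combine to the claimed isomorphism.
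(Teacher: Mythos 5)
Your strategy --- mapping $c \in \mathcal{L}(F)$ to $[\langle 1, -c\rangle] \in \Sh(F)$ and using the local-global principle in dimension $\geq 3$ to bound the anisotropic dimension of kernel classes --- is the same as the paper's, and the injectivity, the parity argument, and the reduction to $\dim q = 2$ are all fine; the worry you flag about passing from points of $\mathcal{X}_s$ to $\Omega_T(F)$ is indeed handled by \Cref{point-val-equivalence} together with \Cref{point isotropy criterion}. However, your surjectivity argument has a genuine gap at its last step. From $q \cong b\langle 1, -a\rangle$ with $a \in \mathcal{L}(F)$ you conclude that the map is onto, but this tacitly assumes $[q] = [\langle 1, -a\rangle]$ in $W(F)$, which does not follow: similar binary forms need not be Witt equivalent (e.g.\ $\langle 2, 3\rangle$ and $\langle 1, 6\rangle$ are similar over $\mathbb{Q}$ but have different Hasse invariant at $3$, hence distinct Witt classes). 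The paper fills exactly this hole with an explicit intermediate claim: for a binary anisotropic $q$ with $[q] \in \Sh(F)$ and any $c \in F^\times$, the classes $[q]$ and $[cq]$ coincide. This follows because $q \perp (-cq)$ is a $4$-dimensional form of trivial discriminant (so it lies in $I^2(F)$ and its anisotropic part has dimension $0$ or $\geq 4$) which is locally hyperbolic since both $q$ and $cq$ are; another invocation of the local-global principle then forces the anisotropic part to vanish. Only with this in hand may one replace $q$ by $\langle 1, -a\rangle$.

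A secondary inaccuracy: the identity ``$\langle 1,-a\rangle \otimes \langle 1,-b\rangle \equiv \langle 1,-ab\rangle$ modulo hyperbolic forms'' you invoke for the homomorphism property is false; the left-hand side is the $4$-dimensional $2$-fold Pfister form $\langle\langle a, b\rangle\rangle$, and its vanishing in $W(F)$ is precisely the content of the desired relation $[\langle 1,-a\rangle]+[\langle 1,-b\rangle]=[\langle 1,-ab\rangle]$, not a formal consequence. For $a, b \in \mathcal{L}(F)$ this too follows from the local-global principle, as the Pfister form is then locally hyperbolic. Alternatively, one can avoid checking the homomorphism property altogether, since a cardinality-preserving bijection between elementary abelian $2$-groups already gives an abstract isomorphism, which is all that the statement of the lemma requires.
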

\begin{proof}  
By the local-global principle for quadratic forms in $3$ or more variables with respect to $\Omega_T(F)$, any nontrivial Witt class in $\Sh(F)$ is defined by an anisotropic quadratic form of dimension $2$. Again, using this local-global principle, we can show that any scalar multiple of an anisotropic quadratic form of dimension two whose Witt class lies in $\Sh(F)$, defines the same Witt class.
So, any element in $\Sh(F)$ is represented by an anisotropic quadratic form $X^2 - c Y^2$ for some $c \in \mathcal{L}(F)$, and in fact two such $c$ define the same Witt class if and only if their product is a square in $F$.
\end{proof}

\begin{prop}\label{HHKbetti} Let $\mc{X}$ be a regular model with normal crossings for $F/T.$ Let $\beta$ be the Betti number of the dual graph $\mc{D}(\mc{X}_s)$. Then $$\left\vert \faktor{\mathcal{L}(F)}{F^{\times 2}} \right\vert =2^{\beta}.$$
\end{prop}
\begin{proof} 
One easily verifies that, since $\mc{X}_s$ has only normal crossings in $\mc{X}$, our reduction graph $\mathcal{D}(\mc{X}_s)$ coincides with the reduction graph defined in \cite{HHK15}.
For a point $x\in\mc{X}_{s},$ we denote by $F_{x}$ the field of fractions of the completion of the local ring $\mathcal{O}_{\mc{X},x}$ with respect to its maximal ideal. By \cite[Theorem 9.6]{HHK15}, the kernel of the product of restriction morphisms $\varphi: WF\rightarrow\displaystyle\prod_{x\in \mc{X}_{s}} WF_{x}$ is isomorphic to the abelian $2$-group $\mathsf{Hom}(\pi_{1}(\mc{D}(\mc{X}_s)),\mathbb{Z}/2\mathbb{Z}),$ where $\pi_{1}(\mc{D}(\mc{X}_s))$ is the fundamental group of the graph $\mc{D}(\mc{X}_s)$ as a topological space. It follows by \cite[Proposition 1A.1 and Proposition 1A.2]{Ha00} that the group $\pi_{1}(\mc{D}(\mc{X}))$ is freely generated by $\beta$ elements. Hence $\mathsf{Hom}(\pi_{1}(\mc{D}(\mc{X}_s)),\mathbb{Z}/2\mathbb{Z})$ is isomorphic to $(\mathbb{Z}/2\mathbb{Z})^{\beta},$ whereby $\vert \ker\varphi \vert = 2^\beta$. On the other hand, we have by \cite[Proposition 9.11 (c)]{HHK15} that $\ker(\varphi) = \Sh(F)$, and finally $\Sh(F)=\mathcal{L}(F)/F^{\times 2}$ by the previous Lemma. 
\end{proof}

Our final goal in this article is to establish  the following inequality:

\begin{thm}\label{boundlocal}
Assume that $\mathsf{char}(k)=0$. Then
$$\log_2\left|\faktor{\mathcal{L}(F)}{F^{\times 2}}\right|\leq \mathfrak{g}(F/K) - \sum_{w \in \Omega_T(F)} [\ell_w:k]\cdot \mathfrak{g}(\kappa_w/k).$$
\end{thm}
\begin{proof}
Let $\mc{X}$ be a regular model for $F/T$ whose special fiber is a normal crossing divisor. Let $T'$ be a maximal unramified valuation ring extension of $T$ inside an algebraic closure of $K$. Denote by $K'$ and $k'$ the  field of fraction and residue field of $T'$ respectively. We denote 
$\mc{X}':= \mc{X} \times_T T'$. It follows from \Cref{bettiinequality} that $\beta(\mc{D}(\mc{X}_s))\leq \beta(\mc{D}(\mc{X'}_s)).$
The statement now follows directly from \Cref{HHKbetti} with \Cref{radicalker} and \Cref{Betti-Genus}. 
\end{proof}

\begin{rem}
The assumption of residue characteristic zero is only needed in the proof for the inequality of \Cref{Betti-Genus}, whose proof uses a cohomological flatness result in the case of residue characteristic zero. However, it is feasible that said inequality holds regardless of cohomological flatness, and so \Cref{boundlocal} might hold without the assumption of residue characteristic zero.
\end{rem}
%%%%%%%%%%%%%%%%%%%%%%%%%%%%%%%%%%%%%%%%%%%%%%%%%%%%%%%%%%%%%%%%
\bibliographystyle{plain}

\end{document}